\newtheorem{theorem}{Theorem}[section]
\newtheorem{lemma}[theorem]{Lemma}
\newtheorem{corollary}[theorem]{Corollary}
\newtheorem{remark}{Remark}[section]
\numberwithin{theorem}{section}
\numberwithin{figure}{section}
\numberwithin{equation}{section}
\newcommand{\TheTitle}{FEM and CIP-FEM for Helmholtz Equation with PML}
\newcommand{\TheAuthors}{Yonglin Li and Haijun Wu}
\title{FEM and CIP-FEM for Helmholtz Equation \\with High Wave Number and PML truncation}
\author{
  Yonglin Li\thanks{Department of Mathematics, Nanjing University, Jiangsu, 210093, People's Republic of China ({\sf liyonglin@smail.nju.edu.cn,  hjw@nju.edu.cn}). This work was partially supported by the NSF of China under grants 11525103, 91630309,  and 11621101.}
  \and
  Haijun Wu\footnotemark[1]
}
\newcommand{\abs}[1]{\left\vert #1 \right\vert}
\newcommand{\He}[1]{\left\Vert{\hskip -2.7pt}\left\vert #1 \right\vert{\hskip -2.7pt}\right\Vert}
\newcommand{\he}[1]{\big\vert\kern-0.25ex\big\vert\kern-0.25ex\big\vert #1 \big\vert\kern-0.25ex\big\vert\kern-0.25ex\big\vert}
\newcommand{\Lt}[2]{\left\Vert #1\right\Vert_{L^2(#2)}}
\newcommand{\Ho}[2]{\left\Vert #1\right\Vert_{H^1(#2)}}
\newcommand{\sHo}[2]{\left\vert #1\right\vert_{H^1(#2)}}
\newcommand{\Ht}[2]{\left\Vert #1\right\Vert_{H^2(#2)}}
\newcommand{\sHt}[2]{\left\vert #1\right\vert_{H^2(#2)}}
\newcommand{\inD}[2]{(#1,#2)_{\mathcal{D}}}
\newcommand{\ine}[2]{\langle #1,#2 \rangle_e}
\newcommand{\inOm}[2]{(#1,#2)_{\Omega}}
\newcommand{\hGnorm}[1]{\left\Vert #1\right\Vert_{H^{1/2}(\hat{\Gamma})}}
\newcommand{\Gnorm}[1]{\left\Vert #1\right\Vert_{H^{1/2}(\Gamma)}}
\newcommand{\ah}{a_{h}}
\newcommand{\bh}{b_{h}}
\newcommand{\Ih}{I_{h}}
\newcommand{\Pha}{P_{h}^{+}}
\newcommand{\Phm}{P_{h}^{-}}
\newcommand{\Ph}{P_{h}^{\pm}}
\newcommand{\B}{\mathcal{B}}
\newcommand{\D}{\mathcal{D}}
\newcommand{\R}{\mathbb{R}}
\newcommand{\Z}{\mathbb{Z}}
\newcommand{\N}{\mathbb{N}}
\newcommand{\C}{\mathbb{C}}
\newcommand{\EhI}{\mathcal{E}_h^I}
\newcommand{\Mh}{\mathcal{M}_h}
\newcommand{\J}{J_{n}}
\newcommand{\Jv}{J_{\nu}}
\newcommand{\Jz}{J_{0}}
\renewcommand{\H}{H_n^{(1)}}
\newcommand{\Hv}{H_{\nu}^{(1)}}
\newcommand{\Hz}{H_{0}^{(1)}}
\newcommand{\hu}{\hat{u}}
\newcommand{\hR}{\hat{R}}
\newcommand{\hC}{\hat{C}}
\newcommand{\bR}{\bar{R}}
\newcommand{\tr}{\tilde{r}}
\newcommand{\tu}{\tilde{u}}
\newcommand{\tx}{\tilde{x}}
\renewcommand{\tt}{\tilde{t}}
\newcommand{\thR}{\tilde{\hat{R}}}
\newcommand{\Om}{\Omega}
\newcommand{\hOm}{\hat{\Omega}}
\newcommand{\hGamma}{\hat{\Gamma}}
\newcommand{\intRhR}{\int_R^{\hR}}
\newcommand{\intrR}{\int_r^R}
\newcommand{\intrhR}{\int_r^{\hR}}
\newcommand{\intr}{\int_0^{r}}
\newcommand{\intR}{\int_0^R}
\newcommand{\inthR}{\int_0^{\hR}}
\newcommand{\al}{\alpha}
\newcommand{\be}{\beta}
\newcommand{\si}{\sigma}
\newcommand{\siz}{\sigma_0}
\newcommand{\de}{\delta}
\newcommand{\lam}{\lambda}
\newcommand{\na}{\nabla}
\newcommand{\Ga}{\Gamma}
\newcommand{\pa}{\partial}
\newcommand{\vp}{\varphi}
\newcommand{\supp}{\mathrm{supp}\, }
\renewcommand{\l}{\ell}
\renewcommand{\i}{{\rm\mathbf i}}
\newcommand{\sgn}{\mathrm{sgn}\,}
\newcommand{\OchO}{\Omega\cup\hat{\Omega}}
\newcommand*{\QED}{\hfill\ensuremath{\square}}
\newcommand{\diam}{\mathrm{diam}\,}
\newcommand{\ls}{\lesssim}
\newcommand{\ufem}{u_h^{\rm FEM}}
\begin{document}
\pagestyle{myheadings}
\markboth{\TheAuthors}{\TheTitle}
\date{}
\maketitle

\begin{abstract}
  The Helmholtz scattering problem with high wave number is truncated by the perfectly matched layer (PML) technique and then discretized by the linear continuous interior penalty finite element method (CIP-FEM). It is proved that the truncated PML problem satisfies the inf--sup condition with inf--sup constant of order $O(k^{-1})$. Stability and convergence of the truncated PML problem are discussed. In particular, the convergence rate is twice of the previous result. The preasymptotic error estimates in the energy norm of the linear CIP-FEM as well as FEM are proved to be $C_1kh+C_2k^3h^2$ under the mesh condition that $k^3h^2$ is sufficiently small. Numerical tests are provided to illustrate the preasymptotic error estimates and show that the penalty parameter in the CIP-FEM may be tuned to reduce greatly the pollution error.
\end{abstract}

{\bf Key words.} 
  Helmholtz equation with high wave number, Perfectly matched layer, FEM, CIP-FEM, Wave-number-explicit estimates

{\bf AMS subject classifications. }
65N12, 
65N15, 
65N30, 
78A40  

\section{Introduction}\label{sec:Introduction}
In this paper, we consider the following acoustic scattering problem in $\R^d~(d=1,2,3)$,
\begin{alignat}{2}
-\Delta u - k^2 u &= f\qquad & \text{in } \R^{d}, \label{eq:Helm}\\
\abs{\frac{\partial u}{\partial r} - \i ku} &= o(r^{\frac{1-d}2})\qquad & r\to \infty, \label{eq:Somm}
\end{alignat}
which is going to be truncated into a bounded computational domain by the PML technique \cite{berenger1994perfectly,collino1998perfectly} and then discretized by the CIP-FEM \cite{Douglas1976Interior,Wu2013Pre,zhu2013preasymptotic,du2015preasymptotic} as well as the FEM. Here $r = \abs{x}$, $f\in L^2(\Om)$. Suppose $\supp f \subset \Omega:= \B(R)$ the ball with center at the origin and radius $R$. Denote by $\Gamma = \partial\Omega$.  Since we are considering the high wave number problems, we assume that $k\gg 1$.

The Helmholtz equation with large wave number is highly indefinite, which makes the analysis of its discretizations such as FEM  very difficult. For the linear FEM, the traditional technique, i.e., the duality argument (or Schatz argument, see \cite{aziz1979scattering, dss94,sch74}) gives merely the error estimate $\he{u-\ufem}_{\Om}\le Ckh$  under the mesh condition that $k^2h$ is small enough, but it is too strict for large $k$. Here $h$ is the mesh size and $\he{\cdot}_{\Om}:=\big(\Lt{\na\cdot}{\Om}^2+\Lt{\cdot}{\Om}^2\big)^\frac12$ denotes the energy norm. Ihlenburg and Babu\v{s}ka \cite{ib95a} considered the one dimensional problem discretized on equidistant grids, and proved the error estimate  $\he{u-\ufem}_{\Om}\le C_1kh+C_2k^3h^2$  under the condition that $kh$ less than some constant less than $\pi$. Note that the error bound includes two terms. The first term is of the same order as the interpolation error. The second term is bounded by the first one if $k^2h$ is small, but it dominates when $k^2h$ is large, which is called the pollution error in such a case \cite{bs00,ihlenburg98,ib95a}. We recall that the term “asymptotic error estimate” refers to the error estimate without pollution error and the term “preasymptotic error estimate” refers to the estimate with nonnegligible pollution effect. Recently, Wu \cite{Wu2013Pre} proved the same preasymptotic error estimate as above for higher dimensional problems while on unstructured meshes under the condition that $k^3h^2$ is sufficiently small. For error analyses of higher order FEM, we refer to \cite{du2015preasymptotic,ihlenburg1997finite,melenk2010convergence,melenk2011wavenumber,zhu2013preasymptotic}. 

The CIP-FEM, which was first proposed by Douglas and Dupont \cite{Douglas1976Interior} for elliptic and parabolic problems in 1970's, uses the same approximation space as the FEM but modifies the bilinear form of the FEM by adding a least squares term penalizing the jump of the normal derivative of the discrete solution at mesh interfaces. Recently the CIP-FEM has shown great potential in solving the Helmholtz problem with large wave number \cite{Wu2013Pre,zhu2013preasymptotic,du2015preasymptotic,bzw16,cwx15}. It is absolute stable if the penalty parameters are chosen as complex numbers with negative imaginary parts, it satisfies an error bound no larger than that of the FEM under the same mesh condition, its penalty parameters may be tuned to greatly reduce the pollution error, and so on. For preasymptotic and asymptotic error analyses of other methods including discontinuous Galerkin methods and spectral methods, we refer to \cite[etc.]{clx13,dgmz12,fw09,fw11,mps13,sw07}. 
We would like to mention that most error analyses in the literature including the above references are for the Helmholtz equation \eqref{eq:Helm} with the impedance boundary condition or the DtN boundary condition instead of the Sommerfeld radiation condition \eqref{eq:Somm}.

A more popular mesh termination technique for wave scattering problems is the PML method, which was originally proposed by Berenger \cite{berenger1994perfectly}. The key idea of the PML technique is to surround the computational domain $\Om$ by a special designed layer (as depicted in Figure \ref{fig:pml}) which  can exponentially  absorb all the outgoing waves entering the layer; even if the waves reflect off the truncated boundary $\hat\Gamma$, the returning waves after one round trip through the absorbing layer are very tiny \cite{berenger1994perfectly,berenger1996three,chew19943d,collino1998perfectly,ty98}. In fact, the fundamental analysis \cite[etc.]{lassas1998existence,chen2003adaptive,hohage2003solving,chen2005,bao2005,bramble2007analysis,cc08,blw10,cz10,cw12} indicates that the classical PML converges exponentially with perfectly non-reflection when the width of the layer or the PML parameter tends to infinity. In practice, the optimal choice of the parameters is important when the wave number is high \cite{cp99}.
\begin{figure}[htbp]
\centering
\includegraphics[width=6cm]{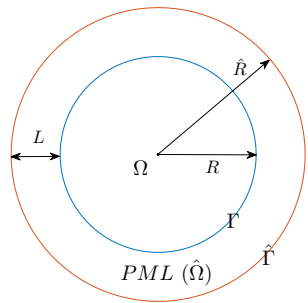}
\caption{Setting of the PML problem.}
\label{fig:pml}
\end{figure}
The truncated PML problem for \eqref{eq:Helm}--\eqref{eq:Somm} can be formulated as: Find $\hu\in H_0^1(\D)$ such that
\begin{equation}\label{PML}
 a(\hu,v) = \inD{f}{v} \quad\forall v\in H_0^1(\D)
\end{equation}
where $\D=\Om\cup\Ga\cup\hat\Om$ and $a(\cdot,\cdot)$ is defined in \eqref{a}. See section \ref{ss:PML} for details.

The purposes of this paper are twofold. First we truncate the Helmholtz problem \eqref{eq:Helm}--\eqref{eq:Somm} by PML and prove the inf--sup condition and the regularity estimate with explicit dependence on the wave number $k$ for the truncated PML problem. Secondly we discretize the truncated PML problem by the linear CIP-FEM (including the linear FEM) and derive the preasymptotic error estimates.
In \cite{chandler2008wave}  Chandler-Wilde and Monk have shown for the problem of acoustic scattering   from a star-shaped scatterer with  the DtN boundary condition that the inf--sup constant is of order $O(k^{-1})$. Melenk and Sauter \cite{melenk2010convergence} have proved that the solution $u$ to \eqref{eq:Helm}--\eqref{eq:Somm} satisfies the stability estimates $\|u\|_{H^j(\Om)}\le C k^{j-1} \|f\|_{L^2(\Om)}$ for $j=0,1,2$.
 While for the truncated PML problem, the best estimate in the literature is from Chen and Xiang \cite{chen2013source}, in which it is shown that the inf--sup constant is of order $O(k^{-\frac32})$. In this paper, we show that the inf--sup constant for the truncated PML problem is still of order $O(k^{-1})$, i.e., 
 \begin{align}\label{infsup}
 \frac{c_1}k \le\inf_{0\neq u\in H_0^1(\D)}\sup_{0\neq v\in H_0^1(\D)} \frac{\abs{a(u,v)}}{\He{u}\He{v}}\le \frac{c_2}k,
 \end{align}
for some positive constants $c_1<c_2$ independent of $k$, where $\He{\cdot}$ is some energy norm defined in \eqref{eq:enorm}. Note that the above inf--sup condition on $H_0^1(\D)$ is not a direct consequence of the inf--sup condition of the original Helmholtz problem \cite{chandler2008wave} and the convergence estimates of the truncated PML problem, since they are valid only on $H^1(\Om)$. Such an inf--sup condition in \eqref{infsup} is useful in the convergence analysis of the truncated PML problem and the analysis of the source transfer domain decomposition method for the truncated PML problem \cite{chen2013source}. In order to carry out the preasymptotic analysis for the CIP-FEM, we need to derive the regularity estimate of the following adjoint problem to \eqref{PML}: Find $w\in H_0^1(\D)$ such that
\begin{align}\label{w}
a(v,w) = \inD{v}{\hu-u_h} \quad\forall v\in H_0^1(\D),
\end{align}
where $u_h$ is the CIP-FE solution. Since the adjoint problem and  the original problem \eqref{PML} are quite similar, so are theirs analysis.    
For easy of presentation, we analyze the original problem  instead. In precise, we derive the following stability estimates for the truncated PML problem \eqref{PML}:
\begin{equation}\label{eq:stab}
k^{-1}\sHt{\hu}{\OchO}+\He{\hu}+k\Lt{\hu}{\D} \leq C\Lt{f}{\D}.
\end{equation}
Since, usually, $\supp(\hu-u_h)\not\subset\Om$, the $f$ in the above estimates is allowed be nonzero in the PML region $\hat\Om$, while in the other estimates regarding  the truncated PML problem  \eqref{PML}, such as the convergence estimates and the error estimates of its CIPFE approximation, it is still assumed that $\supp f\subset\Om$.
Clearly,  the nonzero source in $\hat\Om$ brings about the backward waves, which is one reason that the proof of \eqref{eq:stab} is nontrivial. 
We remark that the authors found that it is not easy to extend the approach in \cite{melenk2010convergence} using Fourier transforms and the analysis in \cite{melenk95a,cf06} using the Rellich identity to the truncated PML problem with complex variable coefficients.  
Our key idea for proving \eqref{eq:stab} is to use the harmonic expansion of the truncated PML solution and analyze each term carefully in the expansion by using various properties of the Bessel functions (cf. \cite{bao2005,Boubendir2013Wave,mswy2018}). The estimates of the inf--sup constant in \eqref{infsup} are proved by using \eqref{eq:stab} and following the proofs in \cite{chandler2008wave}. Furthermore, we derive preasymptotic error estimates for the CIP-FEM by using the regularity estimate of $w$ and the modified duality argument developed in \cite{zhu2013preasymptotic}.

The outline of this paper is as follows. In  Section~\ref{sec:PML-Preliminary} we introduce the truncated PML problems in one, two, and three dimensions and derive the harmonic expansions of the truncated PML solutions.  Some preliminary results are also stated for further analysis. In  Section~\ref{sec:Anal-PML}, we derive the stability estimates, the inf--sup condition, and the convergence estimate with explicit dependence on the wave number $k$ for the truncated PML problem. In particular, the convergence rate is twice of the previous result \cite{chen2005}.  Section~\ref{sec:CIP-FEM} is devoted to the preasymptotic error estimates of CIP-FEM. In  Section~\ref{sec:Experimental}, some numerical tests are provided to verify the preasymptotic error estimates and to show that the penalty parameter in the CIP-FEM may be tuned to reduce greatly the pollution errors. 
	
Throughout the paper, $C$ is used to denote a generic positive constant which is independent of $h,k,f$, and the penalty parameters. We also use the shorthand notation $A\ls B$ and $A\gtrsim B$ for the inequality $A\leq CB$ and $A\geq CB$. $A\eqsim B$ is a notation for the statement $A\ls B$ and $A\gtrsim B$. In addition, the standard space, norm, and inner product notation are adopted. Their definitions can be found in \cite{brenner2007mathematical,ciarlet2002finite}. In particular, $(\cdot,\cdot)_Q$ and $\ine{\cdot}{\cdot}$ denote the $L^2$-inner product on complex-valued $L^2(Q)$ and $L^2(e)$ spaces, respectively. For simplicity, it is assume that $R\eqsim 1$.

\section{PML and Preliminaries}\label{sec:PML-Preliminary}
In this section we introduce the truncated PML problem and some preliminary results for further analysis.
\subsection{The truncated PML problem}\label{ss:PML}
As discussed in \cite{cjm97,bramble2007analysis,collino1998perfectly,lassas1998existence}, the PML problem can be viewed as a complex coordinate stretching of the original scattering problem. We recall the PML obtained by stretching the  radial coordinate. Let 
\begin{align}\label{tr}
\tr:=\intr\al(s) ds=r\be(r) \text{ with } \al(r)=1+\i\si (r), ~\be(r)=1+\i \de(r),
\end{align}
where
\begin{equation} \label{def:medium}
\si (r)=\left\{
\begin{aligned}
& 0, & 0\leq r \leq R, \\
& \siz,  & r>R,
\end{aligned}
\right. \qquad \de(r)=\left\{
\begin{aligned}
& 0, & 0\leq r \leq R, \\
& \frac{\siz(r-R)}{r},  & r>R, 
\end{aligned}
\right.
\end{equation}
and $\siz>0$ is a constant. Note that we have assumed that the PML medium property $\si$ to be constant to simplify the analysis, while our ideas also apply to variable PML medium property (see Remark~\ref{rm:varPML}(iii)). The PML equation is obtained from the Helmholtz equation \eqref{eq:Helm} by replacing the radial coordinate $r$ by $\tr$. For example, in the case of two dimensions ($d=2$), 
 the Helmholtz equation \eqref{eq:Helm} may be rewritten in polar coordinates  as follows. 
\begin{equation}\label{eq:Helm:polar}
-\frac{1}{r}\frac{\partial}{\partial r} \left(r\frac{\partial u}{\partial r}\right) - \frac{1}{r^2}\frac{\partial^2 u}{\partial \theta^2} - k^2 u = f.		
\end{equation}
Then the PML equation is given by
\begin{equation*}
-\frac{1}{\tr}\frac{\partial}{\partial \tr} \left(\tr\frac{\partial \tu}{\partial \tr}\right) - \frac{1}{\tr^2}\frac{\partial^2 \tu}{\partial \theta^2} - k^2 \tu = f,		
\end{equation*}
where $\tu(r,\theta):=u(\tr,\theta)$. Noting that $\frac{\pa}{\pa r}=\al(r)\frac{\pa}{\pa \tr}$, the above equation is rewritten as:
\begin{equation}\label{eq:Helm:trans}
-\frac{1}{r}\frac{\partial}{\partial r} \left(\frac{\be r}{\al}\frac{\partial \tu}{\partial r}\right) - \frac{\al}{\be r^2}\frac{\partial^2 \tu}{\partial \theta^2} - \al\be k^2 \tu = f.
\end{equation}
We note that $\tu = u$ in $\Om$ and $\tu$ decays exponentially away from the boundary of $\Om$ (see \cite[etc.]{chen2005,bao2005}). Therefore, in practice, the PML problem is truncated  at $r=\hR$ for some $\hR>R$ where $\tu$ is sufficiently small. Denote by $\hOm = \{ x\in \R^d:\abs{x}\in (R,\hR) \}$ and by $\D = \mathcal{B}(\hR)=\Om\cup\Gamma\cup\hOm,~\hat{\Gamma} := \partial \D$. Let $L:=\hR-R$ denotes the thickness of PML. Then we arrive at the following truncated PML problem:
\begin{equation}\label{eq:PML:2d}
\left\{
\begin{aligned}
-\frac{1}{r}\frac{\partial}{\partial r} \left(\frac{\be r}{\al}\frac{\partial \hu}{\partial r}\right) - \frac{\al}{\be r^2}\frac{\partial^2 \hu}{\partial \theta^2} - \al\be k^2 \hu = f \quad  &\mbox{in } \D, \\
\hu =0 \quad &\mbox{on } \hat{\Gamma}. 
\end{aligned}
\right. \quad (d=2)
\end{equation}
The truncated PML equations for one and three dimensional cases may be derived in a similar way (see \cite[etc.]{bramble2007analysis}): 
\begin{equation}\label{eq:PML:1d}
\left\{
\begin{aligned}
-\frac{d}{dx}\left(\frac{1}{\al}\frac{d\hu}{dx}\right)-\al k^2\hu =f \quad  &\mbox{in } \D, \\
\hu =0 \quad &\mbox{on } \hat{\Gamma}. 
\end{aligned}
\right. \quad (d=1)
\end{equation}
\begin{equation}\label{eq:PML:3d}
\left\{
\begin{aligned}
-\frac{1}{r^2} \frac{\partial}{\partial r} \left(\frac{\be^2 r^2}{\al}\frac{\partial \hu}{\partial r}\right) - \frac{\al}{r^2}\Delta_S \hu - \al\be^2 k^2 \hu = f \quad &\mbox{in  } \D, \\
\hu=0 \quad & \mbox{on } \hat{\Gamma}.
\end{aligned}
\right. \quad (d=3)
\end{equation}
where $\Delta_S = \frac{1}{\sin\theta}\frac{\partial}{\partial\theta} \left(\sin\theta\frac{\partial}{\partial\theta}\right) + \frac{1}{\sin^2\theta}\frac{\partial^2}{\partial\vp ^2}$ is the Laplace-Beltrami operator on $S$. 

In Cartesian coordinates, the truncated PML problems \eqref{eq:PML:2d}--\eqref{eq:PML:3d} can be rewritten in the following unified form:
\begin{align}
-\na\cdot(A\na \hu)-Bk^2\hu = f \qquad & \mbox{in } \D \label{eq:PML:all}\\
\hu = 0 \qquad & \mbox{on } \hat{\Gamma}\label{eq:PML:bound:all}
\end{align}
where
\begin{align*}
A=HDH^T,\quad B=\al(r)\be^{d-1}(r),
\end{align*}
and
\[
\begin{array}{cc}
D=\frac{1}{\al(x)}, ~ H=1, & \text{for } d=1,\\
D=\begin{pmatrix}
	\frac{\be(r)}{\al(r)} & 0 \\
	0 & \frac{\al(r)}{\be(r)}	
  \end{pmatrix},~
H=\begin{pmatrix}
	\cos\theta & -\sin\theta \\
	\sin\theta & \cos\theta
  \end{pmatrix}, & \text{for } d=2,\\
D=\begin{pmatrix}
	\frac{\be^2(r)}{\al(r)} & 0      & 0    \\
	0                       & \al(r) & 0    \\
	0                       & 0      & \al(r)
  \end{pmatrix},~
H=\begin{pmatrix}
	\sin\theta\cos\vp & \cos\theta\cos\vp & -\sin\vp \\
	\sin\theta\sin\vp & \cos\theta\sin\vp & \cos\vp \\
	\cos\theta            & -\sin\theta           & 0 
  \end{pmatrix}, & \text{for } d=3.
\end{array}
\]
The variational formulation of the truncated PML problem \eqref{eq:PML:all}--\eqref{eq:PML:bound:all} reads as: Find $\hu\in H_0^1(\D)$ such that
\begin{equation}\label{eq:PML:Var:1}
 a(\hu,v) = \inD{f}{v} \quad\forall v\in H_0^1(\D)
\end{equation}
where
\begin{align}\label{a}
a(u,v) &:= \inD{A\na u}{\na v}-k^2\inD{Bu}{v}.
\end{align}
Define the energy norm $\He{\cdot}$ by
\begin{equation}\label{eq:enorm}
\He{v} := \left(\Re(a(v,v))+2k^2\Lt{v}{\D}^2\right)^{\frac 12}.
\end{equation}
The definition is reasonable since it can be shown that $\Re(a(v,v))+2k^2\Lt{v}{\D}^2 > 0$ for any $0\neq v\in H^1(\D)$. For example, for the 2D case, this is a consequence of the following formula of $\Re(a(v,v))$ and the fact that $0\leq\delta\leq\sigma$.
\begin{align*}
\Re a(v,v) &= \int_0^{2\pi}\hskip -7pt\int_0^{\hat R} \Big(\frac{1+\sigma\delta}{1+\sigma ^2}r\abs{v_r}^2 + \frac{1+\sigma\delta}{1+\delta ^2}\frac{1}{r}\abs{v_\theta}^2 + (\sigma\delta-1)k^2 r\abs{v}^2\Big)dr d\theta .
\end{align*}

\subsection{Harmonic expansions} In this subsection, we write the solutions to the original scattering problem \eqref{eq:Helm}--\eqref{eq:Somm} and the truncated PML problem \eqref{eq:PML:all}--\eqref{eq:PML:bound:all} into harmonic expansions.

\subsubsection{1D case} A simple calculation shows that the solution $u$ to the Helmholtz problem \eqref{eq:Helm}--\eqref{eq:Somm} in $\R^1$ is given by
\begin{equation}\label{eq:un:1d}
u(x) = -\frac{1}{2\i k}e^{-\i kx}\int_x^{+\infty} e^{\i kt}f(t)dt - \frac{1}{2\i k}e^{\i kx}\int_{-\infty}^x e^{-\i kt}f(t)dt
\end{equation}
Consequently, the PML solution $\hu$ of \eqref{eq:PML:1d} is given by
\begin{equation}
\begin{aligned}\label{eq:hun:1d}
\hu(x) = &-\frac{1}{2\i k}e^{-\i k\tx}\int_x^{\hR} e^{\i k\tt}f(t)dt - \frac{1}{2\i k}e^{\i k\tx}\int_{-\hR}^x e^{-\i k\tt}f(t)dt \\
 & +D_1 e^{-\i k\tx}+D_2 e^{\i k\tx} 
\end{aligned}
\end{equation}
where $\tx:=-\widetilde{(-x)}$ for $x<0$ and
\begin{equation}\label{eq:hun:1d:coe}
D_1 = \frac{1}{2\i k}\frac{e^{2\i k\thR}J_2-J_1}{e^{2\i k\thR}-e^{-2\i k\thR}},\qquad 
D_2 = \frac{1}{2\i k}\frac{e^{2\i k\thR}J_1-J_2}{e^{2\i k\thR}-e^{-2\i k\thR}}
\end{equation}
with $J_1 := \int_{-\hR}^{\hR} e^{-\i k\tt}f(t)dt$ and $J_2 := \int_{-\hR}^{\hR} e^{\i k\tt}f(t)dt$. 

\subsubsection{2D case}
We solve the problems by separation of variables. Recall that every function $w\in L^2(\R^2)$ has the following Fourier expansion:
\begin{equation}\label{eq:Harm-expansions}
w(r,\theta) = \sum_{n\in\Z} w_n(r) e^{\i n\theta}, \quad w_n(r) =\frac{1}{2\pi} \int_0^{2\pi} w(r,\theta)e^{-\i n\theta} d\theta.
\end{equation}
Note that we have
\[
\Lt{w}{\R^2}^2=2\pi\sum_{n\in\Z}\int_0^{+\infty} r\abs{w_n(r)}^2 dr.
\]
By substituting the Fourier expansion of $u$ into  the PDE \eqref{eq:Helm:polar} we obtain the following ODE system of $u_n$:
\begin{equation}\label{eq:Helm:ODE}
-\frac{1}{r}\frac{d}{dr}\left(r\frac{du_n}{dr}\right)-\left(k^2-\frac{n^2}{r^2}\right)u_n = f_n. 
\end{equation}
Introduce the variable $s=kr$ and rewrite the above equations as:
\begin{equation*}
\frac{d^2 u_n}{ds^2}+\frac{1}{s}\frac{du_n}{ds}+\left(1-\frac{n^2}{s^2}\right)u_n = -\frac{1}{k^2}f_n. 
\end{equation*}
Applying the general ODE theory \cite{coddington1955theory} and the definitions of the Bessel functions (see, \cite[10.2.1]{olver2010nist}), and noting the Wronskian $\mathscr{W}\{\Jv(z),\Hv(z)\}=\frac{2\i}{\pi z}$ (see, \cite[10.5.3]{olver2010nist}), it can be shown that
\begin{equation}\label{eq:un}
u_n(r) = \frac{\pi\i}{2}\J (kr)\int_r^{+\infty} \H(kt)f_n(t)t dt + \frac{\pi\i}{2}\H (kr)\intr \J(kt)f_n(t)t dt,
\end{equation}
where $\Jv(z)$ and $\Hv(z)$ denote the usual Bessel and Hankel functions of the first kind of order $\nu$. Analogously, for the solution $\hu$ to the PML problem \eqref{eq:PML:2d}, we have
\begin{equation}\label{eq:PML:ODE:2d}
-\frac{1}{r}\frac{d}{dr}\left(\frac{\be r}{\al}\frac{d\hu_n}{dr}\right)-\left(\al\be k^2-\frac{\al n^2}{\be r^2}\right)\hu_n = f_n. 
\end{equation}
From the boundary condition \eqref{eq:PML:bound:all}, there holds
\begin{equation}\label{eq:PML:ODE:bound}
\hu_n(\hR)=0,\quad n\in\Z.
\end{equation}
Furthermore, using  \eqref{eq:Harm-expansions} we have
\begin{equation}\label{eq:PML:ODE:bound:1}
\hu_n(0)=\frac{1}{2\pi}\int_0^{2\pi}\hu(0)e^{-\i n\theta} d\theta 
=0 \text{ for } 0\neq n\in\Z.
\end{equation}
Since $-\Delta\hu_0-k^2\hu_0=f_0$ in the ball $\B(r)$ for $r<R$, we have
$\big|\frac{1}{2\pi r}\int_{\pa\B(r)}\frac{\pa\hu_0}{\pa r}\big|=\frac{1}{2\pi r}\big|\int_{\B(r)}\big(f_0+k^2\hu_0\big)\big|\ls\Lt{f_0}{\B(r)}+k^2\Lt{\hu_0}{\B(r)},$
which implies that 
\begin{equation}\label{eq:PML:ODE:bound:2}
\frac{d\hu_0}{dr}(0)=0.
\end{equation}
Solving these ODEs \eqref{eq:PML:ODE:2d}--\eqref{eq:PML:ODE:bound:2} leads to
\begin{equation}\label{eq:hun}
\begin{aligned}
\hu_n(r) = & {}\frac{\pi\i}{2}\J (k\tr)\int_r^{\hR} \H(k\tt)f_n(t)t dt + \frac{\pi\i}{2}\H (k\tr)\intr \J(k\tt)f_n(t)t dt \\
 & {}+ \hC_n \J(k\tr). 
\end{aligned}
\end{equation}
where
\begin{equation}\label{eq:hC}
\hC_n = -\frac{\pi\i}{2}\frac{\H(k\thR)}{\J(k\thR)}\int_0^{\hR}\J(k\tt)f_n(t)tdt.
\end{equation}
Note that from \cite[10.21(i)]{olver2010nist}, the zeros of $\Jv(z)$ are all real for $\nu\geq -1$. Since $\thR$ is not real and $J_{-n}=(-1)^n\J$ for integer $n$, we have $\J(k\thR)\neq 0$, $n\in\Z$.

\subsubsection{3D case} 
Let $Y_{\l}^{m}(\theta,\vp)$ be the standard spherical harmonics, which form an orthonormal basis of the square-integrable functions on the unit sphere and satisfy (see, \cite{nedelec2001acoustic}):
\begin{equation*}
\Delta_S Y_{\l}^{m} +\l(\l+1)Y_{\l}^{m} = 0,\quad \l\in\N,~m\in\Z\cap [-\l,\l].
\end{equation*}
The solutions $u$ and $\hu$ satisfy the following harmonic expansions:
\begin{equation}\label{eq:harm:3d}
u = \sum_{\l=0}^{+\infty} \sum_{m=-\l}^{\l} u_\l^m(r) Y_{\l}^{m}(\theta,\vp),\quad \hu = \sum_{\l=0}^{+\infty} \sum_{m=-\l}^{\l} \hu_\l^m(r) Y_{\l}^{m}(\theta,\vp),
\end{equation}
where $(r,\theta,\vp)$ are the spherical coordinates. Similarly, we have
\[
\Lt{u}{\R^3}^2=\sum_{\l=0}^{+\infty}\sum_{m=-\l}^{\l}\int_0^{+\infty} r^2\abs{u_\l^m(r)}^2 dr
\]
The coefficients $u_\l^m$ and $\hu_\l^m$ satisfy
\begin{equation}\label{eq:ODE:3d}
-\frac{1}{r^2}\frac{d}{dr}\left(r^2\frac{d u_\l^m}{dr}\right) - \left(k^2 -\frac{\l(\l+1)}{r^2}\right)u_\l^m = f_\l^m,
\end{equation}
\begin{equation}\label{eq:PML:ODE:3d}
-\frac{1}{r^2}\frac{d}{dr}\left(\frac{\be^2 r^2}{\al}\frac{d\hu_\l^m}{dr}\right) - \left(k^2\al\be^2 -\frac{\al\l(\l+1)}{r^2}\right)\hu_\l^m = f_\l^m.
\end{equation}
Similar to \eqref{eq:PML:ODE:bound}--\eqref{eq:PML:ODE:bound:2}, $\hu_\l^m$ satisfies the following boundary conditions:
\begin{equation}\label{eq:PML:ODE:bound:3D}
\hu_\l^m(\hR)=0,\quad \hu_\l^m(0)=0 \text{ for } 0\neq\l\in\N,
\quad\text{and } \lim_{r\to 0}r^{\frac12}\frac{d \hu_0^0}{d r}(r)=0.
\end{equation}
Noting that $v_\l^m:=r^\frac12 u_\l^m$ satisfies
\begin{align*}-\frac{1}{r}\frac{d}{dr}\left(r\frac{d v_\l^m}{dr}\right) - \left(k^2 -\frac{(\l+\frac12)^2}{r^2}\right)v_\l^m = r^\frac12 f_\l^m,
\end{align*}
it follows from \eqref{eq:Helm:ODE}--\eqref{eq:un} that
\begin{equation}\label{eq:un:3d}
\begin{aligned}
r^\frac12 u_\l^m(r) = &\frac{\pi\i}{2}J_{\l+\frac 12} (kr)\int_r^{+\infty} H_{\l+\frac 12}^{(1)}(kt)\big(t^\frac12 f_\l^m(t)\big)t dt \\
&+ \frac{\pi\i}{2}H_{\l+\frac 12}^{(1)} (kr)\intr J_{\l+\frac 12}(kt)\big(t^\frac12 f_\l^m(t)\big)t dt,
\end{aligned}
\end{equation}
Similarly, for the truncated PML solution, we have
\begin{equation}\label{eq:hulm}
\begin{aligned}
\tr^{\frac12}\hu_\l^m(r) = &~\frac{\pi\i}{2}J_{\l+\frac 12}(k\tr)\int_r^{\hR}H_{\l+\frac 12}^{(1)}(k\tt)\big(\be^{-\frac12}t^\frac12 f_\l^m(t)\big)tdt \\
 &~+ \frac{\pi\i}{2}H_{\l+\frac 12}^{(1)}(k\tr)\intr J_{\l+\frac 12}(k\tt)\big(\be^{-\frac12}t^\frac12 f_\l^m(t)\big)tdt + \hC_{\l+\frac 12}J_{\l+\frac 12}(k\tr),
\end{aligned}
\end{equation}
where
\begin{equation}\label{eq:hC2}
\hC_{\l+\frac 12} = -\frac{\pi\i}{2}\frac{H_{\l+\frac 12}^{(1)}(k\thR)}{J_{\l+\frac 12}(k\thR)}\int_0^{\hR}J_{\l+\frac 12}(k\tt)\big(\be^{-\frac12}t^\frac12 f_\l^m(t)\big)tdt.
\end{equation}
Similarly to \eqref{eq:hC}, $J_{\l+\frac 12}(k\thR)\neq 0$ for $\l\in\N$.

\subsection{Stability estimates of the Helmholtz problem}
The following lemma is proved in \cite[Lemma 3.5]{melenk2010convergence}.
\begin{lemma}\label{lem:stab:Helm-Somm}
For the solution $u$ to the problem \eqref{eq:Helm}--\eqref{eq:Somm}, there holds
\begin{equation*}
k^{-1}\Ht{u}{\Om} +\Ho{u}{\Om} +k\Lt{u}{\Om} \leq C\Lt{f}{\Om}.
\end{equation*}
\end{lemma}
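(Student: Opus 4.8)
The plan is to prove the three bounds in two stages: first the $L^2$ and $H^1$ estimates $k\Lt{u}{\Om}+\sHo{u}{\Om}\ls\Lt{f}{\Om}$, and then to bootstrap to the $H^2$ bound by elliptic regularity. Because $\Om=\B(R)$ is a ball, hence star-shaped about the origin, and $\supp f\subset\Om$, the first stage is naturally handled by the Rellich--Morawetz multiplier method applied on the whole space.

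For the first stage I would fix a large radius $\rho>\hR$, carry out all integrations on $\B(\rho)$, and let $\rho\to\infty$ at the end. Testing $-\Delta u-k^2u=f$ against $\bar u$ on $\B(\rho)$ and taking the imaginary part gives the flux identity $\Im\int_{\pa\B(\rho)}\pa_r u\,\bar u=-\Im\int_{\B(\rho)}f\bar u$; the radiation condition \eqref{eq:Somm} shows this flux tends to $k\int_{\pa\B(\rho)}\abs{u}^2\ge 0$, which simultaneously controls the spherical $L^2$-mass of $u$ at infinity and guarantees that the far-field terms are benign in the limit. Next I would apply the Rellich--Morawetz identity, multiplying the equation by $2\,\overline{x\cdot\na u}$ (together with a suitable multiple of $\bar u$ to recover the full gradient coefficient) and taking the real part; after integration by parts the volume contributions reduce to a positive combination of $\Lt{\na u}{\B(\rho)}^2$ and $k^2\Lt{u}{\B(\rho)}^2$, while, by star-shapedness, the boundary integrals over $\pa\B(\rho)$ carry a controllable sign and are disposed of in the limit $\rho\to\infty$ using the flux identity.

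Combining the energy, flux, and Rellich--Morawetz identities and applying Cauchy--Schwarz and Young's inequalities with $k$-weighted factors---so that, e.g., $\abs{\Re\int_\Om f\,\overline{x\cdot\na u}}\le\tfrac12\Lt{\na u}{\Om}^2+C\Lt{f}{\Om}^2$ (here $\abs{x}\ls 1$ on $\supp f$) and $k\abs{\int_\Om f\,\bar u}\le\tfrac12\Lt{f}{\Om}^2+\tfrac12 k^2\Lt{u}{\Om}^2$ is absorbed---I expect to close the estimate as $k^2\Lt{u}{\Om}^2+\Lt{\na u}{\Om}^2\ls\Lt{f}{\Om}^2$, which is exactly the first two asserted bounds; the same argument on a slightly larger ball $\B(R')$, $R<R'\eqsim1$, yields the identical bounds there. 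For the $H^2$ bound I would rewrite the equation as $-\Delta u=f+k^2u$ and invoke the interior elliptic regularity estimate $\Ht{u}{\Om}\ls\Lt{\Delta u}{\B(R')}+\Lt{u}{\B(R')}$, which is legitimate since $f$ vanishes on the annulus $\B(R')\setminus\Om$ so that $u$ is smooth there. Then $\Lt{\Delta u}{\B(R')}\le\Lt{f}{\Om}+k^2\Lt{u}{\B(R')}\ls k\Lt{f}{\Om}$, whence $k^{-1}\Ht{u}{\Om}\ls\Lt{f}{\Om}$, completing the proof.

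The step I expect to be the main obstacle is the far-field analysis: one must rigorously justify passing to the limit $\rho\to\infty$ in the boundary integrals over $\pa\B(\rho)$ and show that the Sommerfeld condition \eqref{eq:Somm} forces them either to vanish or to carry the correct sign---rather than merely being $o(1)$ at a formal level---while keeping all constants uniform in $k$. An alternative route, more in the spirit of the rest of the paper, would avoid the multiplier method altogether and instead insert the explicit harmonic/Bessel representation of $u$ (the free-space analogues of \eqref{eq:un} and \eqref{eq:un:3d}) and estimate each mode by means of uniform asymptotics and monotonicity properties of $\Jv$ and $\Hv$; this trades the far-field subtlety for delicate uniform bounds on ratios of Bessel functions.
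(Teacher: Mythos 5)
Your proposal is correct, but it takes a genuinely different route from the paper: the paper does not prove this lemma at all, it simply cites \cite[Lemma 3.5]{melenk2010convergence}, where the estimate is established by Fourier-transform/explicit-representation techniques for the full-space (equivalently, DtN-truncated) problem. Your argument is instead the Rellich--Morawetz multiplier method, which is essentially the approach of \cite{melenk95a,cf06} that the authors themselves mention in the introduction as an alternative for constant-coefficient problems (and which they note does \emph{not} extend easily to the complex-coefficient PML operator --- but that is irrelevant here, since the lemma concerns the original problem \eqref{eq:Helm}--\eqref{eq:Somm}). Your plan is sound: the domain is a ball, $f$ is compactly supported, and the combination of the energy identity, the flux identity, and the Morawetz identity does close to $k^2\Lt{u}{\Om}^2+\Lt{\na u}{\Om}^2\ls\Lt{f}{\Om}^2$, after which interior elliptic regularity on a slightly larger ball gives $k^{-1}\Ht{u}{\Om}\ls\Lt{f}{\Om}$ exactly as you say. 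On the one obstacle you flag: the clean way to handle the far field is to use the combined Morawetz--Ludwig multiplier $x\cdot\na\bar u+\tfrac{d-1}{2}\bar u-\i k\rho\,\bar u$ on $\pa\B(\rho)$, so that the boundary terms assemble into $\rho\int_{\pa\B(\rho)}\abs{\pa_r u-\i ku}^2$ plus terms controlled by the flux identity; the radiation condition \eqref{eq:Somm} then kills this quantity in the limit $\rho\to\infty$, with all constants independent of $k$. This is standard (it is how \cite{chandler2008wave} treats star-shaped scatterers). What each approach buys: the citation to \cite{melenk2010convergence} is shorter and matches the constants used downstream, while your multiplier proof is self-contained and elementary; your suggested alternative via Bessel/harmonic expansions (the free-space analogues of \eqref{eq:un} and \eqref{eq:un:3d}) is in fact precisely the strategy the paper deploys for the genuinely harder truncated PML problem in Section 3, so it would be redundant here.
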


\begin{remark}
(i) In \cite{melenk2010convergence}, the stability estimates above are proved for the solution $u^{\rm DtN}$ to the Helmholtz problem \eqref{eq:Helm} with the transparent DtN boundary condition at $\Gamma$. Apparently, $u^{\rm DtN}|_\Om=u|_\Om$.

(ii) From \eqref{eq:Helm:ODE}, it clear that $u=u_n(r)e^{\i n\theta}$ is the solution to the problem \eqref{eq:Helm}--\eqref{eq:Somm} with $f=f_n(r)e^{\i n\theta}$. Therefore, 
as consequences of the above lemma we have:
\begin{align}
\intR r\abs{u_n(r)}^2dr &\ls{} \frac{1}{k^2}\intR r\abs{f_n(r)}^2dr,\quad n\in\Z \quad(d=2). \label{eq:stab:un:2d}
\end{align}
Similarly, there holds
\begin{align}
\intR r^2\abs{u_\l^m(r)}^2dr &\ls{} \frac{1}{k^2}\intR r^2\abs{f_\l^m(r)}^2dr,\; \l\in\N,~m\in\Z\cap[-\l,\l]  \quad(d=3).\label{eq:stab:un:3d}
\end{align}
\end{remark}


\subsection{Properties of the Special functions}
In this subsection, we state some properties of the Bessel, Hankel functions, and the Modified Bessel functions, which are required for the stability estimates of truncated PML problem.
\begin{lemma}\label{lem:prop:Bessel-Hankel}
For any $\nu\in\R, z\in\C_{++}=\left\{z\in \C:\Im(z)\geq 0, \Re(z)\geq 0\right\}$ and $x\in \R$ such that $0<x\leq\abs{z}$, we have ,
\begin{equation}\label{eq:Hankel:bound}
\abs{H_{\nu}^{(1)}(z)} \leq e^{-\Im(z)\left(1-\frac{x^2}{\abs{z}^2}\right)^{1/2}}\abs{H_{\nu}^{(1)}(x)},
\end{equation}
In addition, for any $n\in\Z,~\l\in\N,~z\in\C$, there holds 
\begin{equation}\label{eq:Bessel:bound}
\abs{\J(z)}\leq e^{\abs{\Im(z)}},\quad \abs{J_{\l+\frac12}(z)}\leq \abs{\frac{2z}{\pi}}^{\frac12}e^{\abs{\Im(z)}}.
\end{equation}
\end{lemma}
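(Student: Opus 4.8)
The three estimates are of two different flavours, so I would treat the Bessel bounds in \eqref{eq:Bessel:bound} and the Hankel bound \eqref{eq:Hankel:bound} separately. For both Bessel bounds the plan is to start from an integral representation in which the order enters only through a factor of modulus at most one, so that the elementary inequality $\abs{e^{\i z t}}=e^{-t\Im(z)}\le e^{\abs{t}\abs{\Im(z)}}$ does all the work. For the integer case I would use the classical representation
\[
\J(z)=\frac1\pi\int_0^\pi\cos\big(n\tau-z\sin\tau\big)\,d\tau
\]
together with $\abs{\cos w}\le\cosh(\abs{\Im w})$; since $n\tau$ is real, $\abs{\Im(n\tau-z\sin\tau)}=\abs{\sin\tau}\,\abs{\Im(z)}\le\abs{\Im(z)}$, and integrating over $[0,\pi]$ gives $\abs{\J(z)}\le e^{\abs{\Im(z)}}$ at once. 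For the half-integer case I would pass to the spherical Bessel function $\j$ through $J_{\l+\frac12}(z)=\big(2z/\pi\big)^{1/2}\j(z)$, so that the claimed bound is equivalent to $\abs{\j(z)}\le e^{\abs{\Im(z)}}$; this follows from the representation $\j(z)=\tfrac12(-\i)^{\l}\int_{-1}^{1}e^{\i z t}P_\l(t)\,dt$ together with the uniform bound $\abs{P_\l(t)}\le1$ on $[-1,1]$, since then $\abs{\j(z)}\le\tfrac12 e^{\abs{\Im(z)}}\int_{-1}^1 dt=e^{\abs{\Im(z)}}$.

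For the Hankel bound \eqref{eq:Hankel:bound} I would first reduce to $\nu\ge0$ using $\abs{H_{-\nu}^{(1)}}=\abs{\Hv}$ for real $\nu$ (a consequence of $H_{-\nu}^{(1)}=e^{\nu\pi\i}\Hv$). The natural starting point is the representation valid in the open upper half-plane,
\[
\Hv(z)=\frac{2}{\pi\i}e^{-\nu\pi\i/2}K_\nu(-\i z)=\frac{2}{\pi\i}e^{-\nu\pi\i/2}\int_0^\infty e^{\i z\cosh\xi}\cosh(\nu\xi)\,d\xi\qquad(\Im(z)>0),
\]
in which $\abs{e^{\i z\cosh\xi}}=e^{-\Im(z)\cosh\xi}$. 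The exponential factor in \eqref{eq:Hankel:bound} should then be produced by the elementary inequality $\cosh\xi\ge1\ge\big(1-x^2/\abs{z}^2\big)^{1/2}$, which forces the slowest decay rate, attained near $\xi=0$, to be at least $\Im(z)\big(1-x^2/\abs{z}^2\big)^{1/2}$. Equivalently, writing $w=-\i z$ one is comparing $\abs{K_\nu(w)}$, with $\Re(w)=\Im(z)\ge0$ and $\abs w=\abs z$, against $\abs{K_\nu(-\i x)}=\tfrac\pi2\abs{\Hv(x)}$ at the purely imaginary point $-\i x$ of smaller modulus.

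The step I expect to be the real obstacle is that the right-hand side of \eqref{eq:Hankel:bound} is the value of $\Hv$ at the \emph{real} argument $x$, where the above integral is only conditionally (oscillatory) convergent and its modulus is kept finite precisely by cancellation. Bounding $\abs{\Hv(z)}$ by the triangle inequality gives only $\abs{\Hv(z)}\le\tfrac2\pi K_\nu(\Im(z))$, which blows up as $\Im(z)\to0$ and is far too lossy near the real axis to recover $\abs{\Hv(x)}$: the phase of the integrand must be retained. I would therefore not estimate the two sides independently but compare them. Keeping $\abs z=\rho$ fixed and writing $z=\rho e^{\i\phi}$, I would work from the Laplace form $K_\nu(-\i s)=\int_1^\infty e^{\i s u}g_\nu(u)\,du$ with $g_\nu(u)=\cosh(\nu\,\mathrm{arccosh}\,u)/\sqrt{u^2-1}\ge0$, and deform the contour $[1,\infty)$ in the $u$-plane into the upper half-plane so that the oscillatory integral defining $\Hv(x)$ becomes genuinely (exponentially) convergent; the exponent $\sqrt{\rho^2-x^2}\,\sin\phi=\Im(z)\big(1-x^2/\abs z^2\big)^{1/2}$ is expected to emerge as the geometric gain of the optimally deformed contour.

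To organize the comparison I would also exploit the monotonicity of $\abs{\Hv}$ on the positive real axis, $\abs{\Hv(\rho)}\le\abs{\Hv(x)}$ for $0<x\le\rho$, which follows from Nicholson's formula $J_\nu(x)^2+Y_\nu(x)^2=\tfrac{8}{\pi^2}\int_0^\infty K_0(2x\sinh t)\cosh(2\nu t)\,dt$ and the monotonicity of $K_0$; this reduces the task to controlling only the angular decay away from the positive real axis. Combining the deformed-contour estimate with this radial monotonicity should yield \eqref{eq:Hankel:bound} for $\Im(z)>0$, after which the boundary case $\Im(z)=0$ is recovered by continuity and the behaviour at $z=0$ is harmless since $x>0$. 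I expect the bookkeeping in the contour deformation—verifying that the arc contributions at infinity vanish and that the gained exponent is exactly $\sqrt{\rho^2-x^2}\,\sin\phi$ rather than merely $\Im(z)$—to be the most delicate part of the whole lemma.
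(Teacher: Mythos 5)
Your handling of the two Bessel bounds \eqref{eq:Bessel:bound} is correct and complete: for the half-integer case your argument (pass to $\j$ via $J_{\l+\frac12}(z)=(2z/\pi)^{1/2}\j(z)$, then use the Legendre integral representation and $\abs{P_\l(t)}\le 1$) is exactly the paper's own, which combines \cite[10.47.3]{olver2010nist} and \cite[10.54.2]{olver2010nist}; for the integer case the paper simply cites \cite[10.14.3]{olver2010nist}, and your Bessel-integral computation is a correct direct proof of that citation. The problem is \eqref{eq:Hankel:bound}. The paper does not prove it either --- it imports it from \cite[Lemma 2.2]{chen2005} --- but what you offer for it is a program, not a proof: the decisive claim, that an optimally deformed contour produces exactly the exponent $\Im(z)\big(1-x^2/\abs{z}^2\big)^{1/2}$, is stated as an expectation, and you yourself flag it as unresolved.

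Beyond being incomplete, the way you propose to organize the comparison cannot succeed. Your plan is an angular step from $z=\rho e^{\i\phi}$ to $\rho=\abs{z}$ (which is supposed to supply the exponential factor), followed by the radial monotonicity $\abs{\Hv(\rho)}\le\abs{\Hv(x)}$. But the required exponential factor depends on $x$, while any angular step from $z$ to $\abs{z}$ cannot; worse, no nontrivial $\nu$-uniform angular gain exists at all. Indeed, by the complex-argument (Macdonald) generalization of the Nicholson formula you quoted,
\begin{equation*}
\abs{\Hv(z)}^2=\frac{8}{\pi^2}\int_0^\infty K_0\Big(2\sqrt{(\Im z)^2+\abs{z}^2\sinh^2 t}\,\Big)\cosh(2\nu t)\,dt,
\qquad z\in\C_{++},\ z\neq 0,
\end{equation*}
one checks that $\abs{\Hv(z)}/\abs{\Hv(\abs{z})}\to 1$ as $\nu\to\infty$ with $z$ fixed, because both integrals concentrate at large $t$, where the two $K_0$-arguments merge; hence any bound $\abs{\Hv(z)}\le C(z)\abs{\Hv(\abs{z})}$ valid for all $\nu$ forces $C(z)\ge 1$, and your factorization can deliver at best $\abs{\Hv(z)}\le\abs{\Hv(x)}$ with no exponential gain. (This is consistent with \eqref{eq:Hankel:bound}, whose exponent vanishes as $x\to\abs{z}$: the gain is inherently joint in $(x,z)$ and cannot be factored through the circle $\abs{z}=\rho$.) The displayed formula is also precisely what makes the lemma provable without ever lower-bounding an oscillatory integral --- the obstacle you correctly identified --- since it writes $\abs{\Hv(z)}^2$ as an integral of positive terms. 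Given it, the proof is two pointwise estimates inside the integral: first $\sqrt{(\Im z)^2+\abs{z}^2\sinh^2 t}\ \ge\ \Im(z)\big(1-x^2/\abs{z}^2\big)^{1/2}+x\sinh t$, whose validity follows because the difference of the squares of the two sides equals $\big(x\Im(z)/\abs{z}-\sqrt{\abs{z}^2-x^2}\,\sinh t\big)^2\ge 0$; and second $K_0(a+b)\le e^{-a}K_0(b)$ for $a\ge 0$, $b>0$, immediate from $K_0(w)=\int_0^\infty e^{-w\cosh s}\,ds$ and $\cosh s\ge 1$. Inserting these and recognizing the real-argument Nicholson formula for $\abs{\Hv(x)}^2$ gives \eqref{eq:Hankel:bound} after taking square roots. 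This positive-integral mechanism is the substance of the proof cited from \cite{chen2005}; it is the missing idea that your contour-deformation program would have to be replaced by.
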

\begin{proof}
We refer to \cite[Lemma 2.2]{chen2005} for \eqref{eq:Hankel:bound} and \cite[10.14.3]{olver2010nist} for the first inequality in \eqref{eq:Bessel:bound}. On the other hand, it follows from \cite[10.47.3,10.54.2]{olver2010nist} that
\begin{equation*}
\abs{J_{\l+\frac12}(z)}\leq \abs{\frac{2z}{\pi}}^{\frac12}\frac{e^{\abs{\Im(z)}}}{2}\abs{\int_{-1}^1 \abs{P_\l(t)}dt}
\end{equation*}
where $P_\l(t)$ is the Legendre function, which implies the second inequality in \eqref{eq:Bessel:bound}.
\end{proof}

In the following lemma which is proved in \cite[Lemma 5.1 and A.1]{bao2005}, we introduce the uniform asymptotic expansions of the Modified Bessel functions $I_\nu(z)$ and $K_\nu(z)$. To estimate $\Jv(z)$ and $\Hv(z)$, we use the following relations \cite[10.27.6--8]{olver2010nist}:
\begin{align}\label{eq:Bessel:relation}
\Jv(z)&=e^{\frac{\nu\pi\i}{2}}I_{\nu}(-\i z),~\Hv(z)=-\frac{2\i}{\pi}e^{-\frac{\nu\pi\i}{2}}K_{\nu}(-\i z), \quad -\frac{\pi}{2}\leq \arg z\leq \pi.
\end{align}
\begin{lemma}\label{lem:prop:Modi-Bessel}
Denote by $z=x+\i y=re^{\i\theta}\in\C,x,y\in\R.$ Assume that $z$ satisfies $\abs{\arg z}\leq \frac{\pi}{4}$. Then
\begin{align}
I_{\nu}(\nu z) = & {}\left(\frac{1}{2\pi\nu}\right)^{1/2}\frac{e^{\nu\xi}}{(1+z^2)^{1/4}}\frac{1+\eta_1(\nu,z)}{1+\eta_1(\nu,\infty)},\label{eq:Modi-Bessel:1}\\
K_{\nu}(\nu z) = & {}\left(\frac{\pi}{2\nu}\right)^{1/2}\frac{e^{-\nu\xi}}{(1+z^2)^{1/4}}\big(1+\eta_2(\nu,z)\big). \label{eq:Modi-Bessel:2}
\end{align}
where $\xi=\left(1+z^2\right)^{1/2}+\ln \frac{z}{1+\left(1+z^2\right)^{1/2}}$. Moreover, the error terms $\eta_j,~j=1,2$ are bounded by
\begin{equation}\label{eq:Modi-Bessel:err:eta}
\abs{\eta_j(\nu,z)} \leq \hat{M}(\nu,z):= \exp\left(\frac{2M(z)}{\nu}\right) \frac{2M(z)}{\nu} 
\end{equation}
where $M(z)$ is defined as follows:
\begin{equation}\label{eq:Modi-Bessel:err:M}
M(z)=\frac{1}{12}+\frac 1{6\sqrt5}+\frac{\abs{\Im(z)}}{\Re(z)} \min\left\{ \left(\frac 4{27}\right)^{1/4},\frac{1}{\Re(z)} \right\}. 
\end{equation}
In addition, the following estimates for $\xi$ hold:
\begin{enumerate}[\rm (i)]
\item $\Re(\xi) \geq \xi(x)> x-\frac{1}{2x}$, for $x\geq \frac{7}{10}$, \label{eq:Modi-Bessel:err:1}
\item $\Re(\xi)$ is increasing in $r$ and decreasing in $\abs{\theta}$. \label{eq:Modi-Bessel:err:3}
\end{enumerate}
\end{lemma}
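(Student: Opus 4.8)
This lemma records Olver's uniform asymptotic expansions of the modified Bessel functions of large order together with explicit error bounds, so my plan is to reconstruct these two ingredients and then read off (i)--(ii) from the phase $\xi$. The starting point is the Liouville--Green (Olver) transformation of the equation satisfied by $I_\nu(\nu z)$ and $K_\nu(\nu z)$: after rescaling the argument by $\nu$, this equation is $w''+z^{-1}w'-\nu^2(1+z^{-2})w=0$, and the substitution $p=(1+z^2)^{-1/2}$ together with the phase $\xi=(1+z^2)^{1/2}+\ln\frac{z}{1+(1+z^2)^{1/2}}$ brings it to a form whose two formal solutions are $e^{\pm\nu\xi}(1+z^2)^{-1/4}\sum_{k\ge0}(\pm1)^kU_k(p)\nu^{-k}$, with $U_0\equiv1$ and $U_1(p)=\frac1{24}(3p-5p^3)$. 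That $\xi$ is the correct phase I will check directly by verifying $d\xi/dz=(1+z^2)^{1/2}/z$, which is also the identity driving (i)--(ii) below. Matching against the known behaviour as $z\to+\infty$, where $K_\nu$ is recessive and $I_\nu$ dominant, fixes the prefactors $(\pi/2\nu)^{1/2}$ and $(1/2\pi\nu)^{1/2}$ and yields the leading forms in \eqref{eq:Modi-Bessel:1}--\eqref{eq:Modi-Bessel:2}.

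The quantitative heart of the lemma is \eqref{eq:Modi-Bessel:err:eta}. Truncating each series after the leading term $U_0=1$ turns the expansion into a Volterra equation for the relative remainder $\eta_j$; Olver's method of successive approximations then bounds $\eta_j$ by $\exp(2\mathcal V/\nu)-1$, and hence, via $e^t-1\le te^t$, by $\exp(2\mathcal V/\nu)\,2\mathcal V/\nu$, where $\mathcal V$ is the total variation of $z'\mapsto U_1(p(z'))$ along the progress path of the recessive solution running from $z$ out to $+\infty$. Identifying $\mathcal V$ with $M(z)$ is then an explicit computation. Along the real portion of the path $p$ sweeps a subinterval of $[0,1]$; since $U_1$ has its only interior critical point at $p=1/\sqrt5$ with $U_1(1/\sqrt5)=1/(12\sqrt5)$, while $U_1(0)=0$ and $U_1(1)=-1/12$, the variation accumulated there is exactly $\frac1{12}+\frac1{6\sqrt5}$, the first two terms of $M$. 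The transverse (off-axis) portion contributes the remaining $\frac{|\Im z|}{\Re z}\min\{(4/27)^{1/4},1/\Re z\}$ after bounding $|U_1|$ and $|U_1'|$; the hypothesis $|\arg z|\le\pi/4$ is precisely what keeps this path admissible and the variation finite. For the recessive $K_\nu$ this gives \eqref{eq:Modi-Bessel:2} outright. For the dominant $I_\nu$ the remainder cannot be estimated against $z=+\infty$ directly; the standard device, which I will follow, is to normalise by the endpoint and estimate the ratio $(1+\eta_1(\nu,z))/(1+\eta_1(\nu,\infty))$, which is exactly the form in which \eqref{eq:Modi-Bessel:1} is stated.

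Estimates (i) and (ii) follow by elementary complex calculus from $d\xi/dz=(1+z^2)^{1/2}/z$. Writing $z=re^{\i\theta}$ I obtain $\partial_r\Re\xi=\frac1r\Re\sqrt{1+z^2}$ and $\partial_\theta\Re\xi=-\Im\sqrt{1+z^2}$. In the sector $|\arg z|\le\pi/4$ one has $\Re(1+z^2)\ge1>0$, so $\sqrt{1+z^2}$ lies in the open right half-plane; hence $\partial_r\Re\xi>0$, i.e. $\Re\xi$ is increasing in $r$, while $\partial_\theta\Re\xi$ has sign opposite to $\theta$, i.e. $\Re\xi$ is decreasing in $|\theta|$, which is (ii). For (i), fixing $x=\Re z$ and moving along the vertical segment to $z$ (which stays in the sector), the sign of $\partial_y\Re\xi$ shows $y\mapsto\Re\xi(x+\i y)$ is even and nondecreasing in $|y|$, so $\Re\xi(z)\ge\Re\xi(x)=\xi(x)$; the scalar inequality $\xi(x)>x-\frac1{2x}$ for $x\ge\frac7{10}$ is then a one-variable estimate, proved by showing $g(x):=\xi(x)-x+\frac1{2x}$ is positive on $[\frac7{10},\infty)$ (its value tends to $0^+$ as $x\to\infty$, and monotonicity confines any sign change below the threshold).

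I expect the quantitative bound \eqref{eq:Modi-Bessel:err:eta} to be the main obstacle: reproducing the precise constant $M(z)$ requires a careful choice of integration path, separate variation estimates of $U_1$ on its real and transverse portions, and, for the dominant solution $I_\nu$, control of the endpoint remainder $\eta_1(\nu,\infty)$ so that the normalising ratio is uniformly meaningful. By comparison the phase estimates (i)--(ii) are routine once $d\xi/dz$ is in hand, with only the sharp $\frac7{10}$ threshold in (i) demanding a little extra care.
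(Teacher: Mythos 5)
There is nothing in the paper itself to compare your argument against: the lemma is not proved there but imported, the sentence introducing it stating that it ``is proved in \cite[Lemma 5.1 and A.1]{bao2005}''. Your proposal reconstructs precisely the argument behind that citation --- Olver's uniform asymptotic expansion of $I_\nu(\nu z)$ and $K_\nu(\nu z)$ for large order (Liouville--Green transformation, phase $\xi$, leading coefficient $U_1(p)=\frac{1}{24}(3p-5p^3)$ with $p=(1+z^2)^{-1/2}$), combined with Olver's explicit error theory bounding the relative remainders by the total variation of $U_1$ along a progressive path, plus elementary monotonicity estimates for $\Re\xi$ --- so in that sense it takes the same route as the source proof. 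The checkable details are correct: $\frac{d\xi}{dz}=\frac{(1+z^2)^{1/2}}{z}$ holds; the variation of $U_1$ over $p\in[0,1]$ (maximum $\frac{1}{12\sqrt5}$ at $p=\frac{1}{\sqrt5}$, value $-\frac{1}{12}$ at $p=1$) is exactly $\frac{1}{12}+\frac{1}{6\sqrt5}$, the first two terms of $M$; the identities $\partial_r\Re\xi=\frac{1}{r}\Re\sqrt{1+z^2}>0$ and $\partial_\theta\Re\xi=-\Im\sqrt{1+z^2}$, the latter having sign opposite to $\theta$ in the sector, give (ii); and $\partial_y\Re\xi=-\Im\sqrt{1+z^{-2}}>0$ for $0<y\le x$ gives $\Re\xi\ge\xi(x)$. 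Two minor repairs: the evenness of $y\mapsto\Re\xi(x+\i y)$ follows from the reflection symmetry $\xi(\bar z)=\overline{\xi(z)}$, not from the sign of the derivative; and the scalar bound is cleaner than your ``sign change below the threshold'' phrasing suggests, since $g(x)=\xi(x)-x+\frac{1}{2x}$ satisfies $g'(x)=\sqrt{1+x^{-2}}-1-\frac{1}{2x^2}\le 0$ and $g(x)\to 0^+$ as $x\to\infty$, so $g>0$ on all of $(0,\infty)$ and the threshold $\frac{7}{10}$ is immaterial to the inequality. The only place where your write-up is genuinely a sketch rather than a proof is the identification of the explicit constant $M(z)$ --- the transverse term $\frac{\abs{\Im z}}{\Re z}\min\big\{(4/27)^{1/4},\frac{1}{\Re z}\big\}$ and the uniform control of the normalizing remainder $\eta_1(\nu,\infty)$ for the dominant solution --- which you yourself flag as the main obstacle; that computation is exactly the content of the appendix of \cite{bao2005}, and your plan for it (split the variation into real-axis and off-axis portions and bound the latter via $\abs{dU_1/dz}$ along the segment) is the correct one.
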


The following lemma gives more properties for the Hankel functions.
\begin{lemma}\label{lem:prop:Bessel:zero}
For any $z\in\C$, the following formulas hold for $j=1,2$ 
\begin{align}
H_0^{(j)}(z) & = {}\left(\frac{2}{\pi z}\right)^{\frac 12}e^{(-1)^{j-1}\i w}\left(1+R_j(z)\right),\label{eq:Bessel:zero}\\
H_{\frac 12}^{(j)}(z) & = {}(-1)^{j}\i\left(\frac{2}{\pi z}\right)^{\frac12}e^{(-1)^{j-1}\i z}.\label{eq:Modi-Bessel:zero}
\end{align}
where $w=z-\frac {\pi}{4}$. Furthermore, if $\abs{z}\geq 1$ and $\abs{\arg{z}}\leq\frac{\pi}{2}$, then  
\begin{align}
\abs{R_j(z)}\leq \frac{\pi}{8}e^{\frac{\pi}{8}}<1.\label{eq:Bessel:zero:bound}
\end{align}
\end{lemma}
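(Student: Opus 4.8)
The statement comprises three assertions of unequal depth: the closed form \eqref{eq:Modi-Bessel:zero}, which is exact; the one-term expansion \eqref{eq:Bessel:zero}, which merely names the remainder $R_j(z)=\Hz(z)\big(\tfrac2{\pi z}\big)^{-1/2}e^{-(-1)^{j-1}\i w}-1$ and is thus automatic once the branch of $z^{1/2}$ is fixed; and the bound \eqref{eq:Bessel:zero:bound}, which carries the real content. The plan is to settle \eqref{eq:Modi-Bessel:zero} by the elementary closed forms for half-order cylinder functions, to read \eqref{eq:Bessel:zero} off the large-argument asymptotics of $H_\nu^{(1)}$ with $\nu=0$ and one retained term, and to obtain \eqref{eq:Bessel:zero:bound} from the rigorous error bound for that expansion.

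For \eqref{eq:Modi-Bessel:zero}, I would start from $J_{1/2}(z)=\big(\tfrac2{\pi z}\big)^{1/2}\sin z$ and $Y_{1/2}(z)=-\big(\tfrac2{\pi z}\big)^{1/2}\cos z$ \cite[10.16.1]{olver2010nist} and use $H_{1/2}^{(j)}=J_{1/2}+(-1)^{j-1}\i Y_{1/2}$. Then
\begin{equation*}
H_{1/2}^{(j)}(z)=\Big(\frac2{\pi z}\Big)^{1/2}\big(\sin z+(-1)^{j}\i\cos z\big)=(-1)^{j}\i\Big(\frac2{\pi z}\Big)^{1/2}e^{(-1)^{j-1}\i z},
\end{equation*}
the last step being Euler's formula; this is exactly \eqref{eq:Modi-Bessel:zero}, with no remainder.

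For \eqref{eq:Bessel:zero}, I would work from the Poisson-type integral representation underlying the large-$z$ expansion \cite[\S10.17]{olver2010nist}, which for $j=1$ gives
\begin{equation*}
\Hz(z)=\Big(\frac2{\pi z}\Big)^{1/2}e^{\i(z-\pi/4)}\frac1{\Gamma(1/2)}\int_0^\infty e^{-u}u^{-1/2}\Big(1+\frac{\i u}{2z}\Big)^{-1/2}du,
\end{equation*}
and the analogue for $j=2$ (with $e^{-\i(z-\pi/4)}$ and $-\i u/(2z)$, or simply via $H_0^{(2)}(z)=\overline{\Hz(\bar z)}$). Since $\Gamma(1/2)=\int_0^\infty e^{-u}u^{-1/2}\,du=\sqrt\pi$, subtracting the $z\to\infty$ limit identifies
\begin{equation*}
R_j(z)=\frac1{\sqrt\pi}\int_0^\infty e^{-u}u^{-1/2}\Big[\big(1+(-1)^{j-1}\tfrac{\i u}{2z}\big)^{-1/2}-1\Big]du,
\end{equation*}
which is \eqref{eq:Bessel:zero} with $w=z-\tfrac\pi4$.

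The bound \eqref{eq:Bessel:zero:bound} is the only real step. On the range $\arg z\in[0,\tfrac\pi2]$ (precisely the one produced by the PML-stretched argument $k\tr$) the factor $1+\tfrac{\i u}{2z}$ has real part $\ge1$, so writing $(1+\zeta)^{-1/2}-1=-\tfrac\zeta2\int_0^1(1+s\zeta)^{-3/2}\,ds$ with $\zeta=\tfrac{\i u}{2z}$ and using $\abs{1+s\zeta}\ge1$ gives $\big|(1+\tfrac{\i u}{2z})^{-1/2}-1\big|\le\tfrac{u}{4\abs z}$, whence $\abs{R_j(z)}\le\tfrac1{4\sqrt\pi\abs z}\int_0^\infty e^{-u}u^{1/2}\,du=\tfrac1{8\abs z}\le\tfrac18<1$. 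To cover the full closed sector $\abs{\arg z}\le\tfrac\pi2$ with the stated constant I would instead invoke the rigorous one-term error bound \cite[10.17.14]{olver2010nist}, $\abs{R_j(z)}\le 2\abs{a_1(0)}\,\mathcal V\,e^{\abs{\nu^2-1/4}\mathcal V}$, with $a_1(0)=-\tfrac18$ and $\abs{\nu^2-\tfrac14}=\tfrac14$ at $\nu=0$, and $\mathcal V$, the total variation of $t^{-1}$ along the ray from $z$ to $\pm\i\infty$, bounded by $\chi(1)/\abs z=\tfrac{\pi}{2\abs z}$ on $\abs{\arg z}\le\tfrac\pi2$ (here $\chi(1)=\sqrt\pi\,\Gamma(\tfrac32)/\Gamma(1)=\tfrac\pi2$); this yields $\abs{R_j(z)}\le\tfrac{\pi}{8\abs z}e^{\pi/(8\abs z)}\le\tfrac\pi8 e^{\pi/8}<1$ for $\abs z\ge1$. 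The delicate point is exactly this variation estimate near $\arg z=-\tfrac\pi2$, where $1+\tfrac{\i u}{2z}$ approaches the branch cut of $(\cdot)^{-1/2}$ and the naive estimate above degenerates; the factor $\chi(1)=\tfrac\pi2$ is what absorbs this and produces the constant $\tfrac\pi8 e^{\pi/8}$. Since only $\abs{R_j}<1$ is used downstream (to ensure $\Hz(z)\ne0$ and two-sided control of $\abs{\Hz(z)}$), the weaker self-contained bound on $[0,\tfrac\pi2]$ already suffices for the applications.
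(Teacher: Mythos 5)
Your proof is correct and takes essentially the same route as the paper: the paper's entire proof of this lemma consists of citing \cite[10.47.5--6, 10.49.6--7]{olver2010nist} for \eqref{eq:Modi-Bessel:zero} and \cite[10.17.13--15]{olver2010nist} for \eqref{eq:Bessel:zero}--\eqref{eq:Bessel:zero:bound}, and your calculation $2\abs{a_1(0)}\,\chi(1)\abs{z}^{-1}e^{\frac{1}{4}\chi(1)\abs{z}^{-1}}=\frac{\pi}{8\abs{z}}\,e^{\pi/(8\abs{z})}\le\frac{\pi}{8}e^{\pi/8}<1$ is precisely the ``simple calculations'' the paper leaves implicit, while your derivation of \eqref{eq:Modi-Bessel:zero} from the closed forms of $J_{1/2},Y_{1/2}$ is an equivalent elementary substitute for the paper's spherical-Bessel citations. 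One correction to your closing remark, however: the self-contained bound on the sector $0\le\arg z\le\frac{\pi}{2}$ does \emph{not} suffice for the paper's application. In \eqref{eq:stab:30}--\eqref{eq:stab:31} the lemma is invoked for both $H_0^{(1)}$ and $H_0^{(2)}$ at the \emph{same} point $z=k\tilde{t}$ with $\arg z\in[0,\frac{\pi}{2})$, and your elementary argument controls only $R_1$ there: for $R_2$ the relevant factor is $1-\i su/(2z)$, whose real part equals $1-\frac{su\sin\theta}{2\abs{z}}$ (where $\theta=\arg z$) and is not bounded below by $1$, so the estimate $\abs{1+s\zeta}\ge 1$ fails exactly in the case $j=2$, $\arg z>0$ that the paper needs; for that case the DLMF error bound with the $\chi(1)$ factor (or some equivalent contour-deformation argument) is genuinely required.
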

\begin{proof}
We refer to \cite[10.47.5--6, 10.49.6--7]{olver2010nist} for \eqref{eq:Modi-Bessel:zero}.   \eqref{eq:Bessel:zero} and \eqref{eq:Bessel:zero:bound} follow from \cite[10.17.13--15]{olver2010nist} and some simple calculations.
\end{proof}

\section{Analysis of the truncated PML problem}\label{sec:Anal-PML}
The analysis of the truncated PML problem is split into four parts. The first part is the $L^2$-stability estimates for \eqref{eq:PML:all}--\eqref{eq:PML:bound:all}, which are quite significant in our work. Then, by applying the variational formula \eqref{eq:PML:Var:1}, we obtain the $H^1$-stability estimates and show that the inf--sup condition constant $\mu \eqsim k^{-1}$. Furthermore, we derive the convergence estimate for the truncated PML problem. Finally, the $H^2$-estimates are derived as an immediate consequence.

\subsection{$L^2$-stability estimates}
In this subsection we prove the following theorem.
\begin{theorem}\label{thm:stab:L2} 
Suppose $R\eqsim\hR\eqsim 1$. Assume that $k\siz L\geq 1$ for 1D case and that 
  
\begin{equation}\label{eq:assumption}
kR\geq 1 \quad\text{ and }\quad k\siz L\geq\max\Big\{ 2kR+\sqrt{3}kL,10 \Big\} \quad\text{ for 2D and 3D cases.}
\end{equation}
Then the truncated PML problem \eqref{eq:PML:all}--\eqref{eq:PML:bound:all} satisfies the following estimate:
\begin{equation}\label{eq:stab:L2} 
\Lt{\hu}{\D} \ls \frac{1}{k}\Lt{f}{\D}.
\end{equation}
\end{theorem}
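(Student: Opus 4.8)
The plan is to prove the $L^2$-stability estimate $\Lt{\hu}{\D}\ls k^{-1}\Lt{f}{\D}$ by working entirely in the harmonic expansion of the truncated PML solution, so that everything reduces to one-dimensional radial estimates. Since the $L^2$ norm splits as an orthogonal sum over the Fourier/spherical-harmonic modes (via the Parseval-type identities recorded just before \eqref{eq:hun} and in \eqref{eq:harm:3d}), it suffices to bound $\int r^{d-1}\abs{\hu_n(r)}^2\,dr$ by $k^{-2}\int r^{d-1}\abs{f_n(r)}^2\,dr$ uniformly in the mode index $n$ (or $\ell$), and then sum. For the 1D case I would insert the explicit formula \eqref{eq:hun:1d}--\eqref{eq:hun:1d:coe} and estimate the homogeneous coefficients $D_1,D_2$ directly, using that $\Im(\tt)\ge 0$ makes the exponentials decaying and that the denominator $e^{2\i k\thR}-e^{-2\i k\thR}$ is bounded below once $k\siz L\ge 1$.

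The heart of the matter is the 2D/3D case. Here I would split $\hu_n$ into the \emph{particular} part (the two integral terms in \eqref{eq:hun}, analogous to the free-space solution $u_n$) and the \emph{correction} part $\hC_n\J(k\tr)$ coming from the truncation, and estimate each separately. For the particular part, the strategy is to compare it with the free-space radial solution $u_n$ from \eqref{eq:un}, whose $L^2(0,R)$ norm is already controlled by the Helmholtz stability estimate \eqref{eq:stab:un:2d} (resp. \eqref{eq:stab:un:3d}); the extra contribution on $(R,\hR)$ and the effect of replacing $r$ by the stretched $\tr$ would be absorbed using the Hankel decay bound \eqref{eq:Hankel:bound} and the Bessel growth bound \eqref{eq:Bessel:bound}. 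For the correction term, the crucial quantity is the ratio $\H(k\thR)/\J(k\thR)$ appearing in \eqref{eq:hC}: I would bound $\abs{\J(k\tr)}$ from above by \eqref{eq:Bessel:bound} and $\abs{\J(k\thR)}$ from below, and bound $\abs{\H(k\thR)}$ from above, all via the uniform asymptotic expansions of the modified Bessel functions in Lemma~\ref{lem:prop:Modi-Bessel} together with the relations \eqref{eq:Bessel:relation}.

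The main obstacle, and where the assumption \eqref{eq:assumption} must be doing its work, is controlling the correction term uniformly across \emph{all} mode indices. The ratio $\abs{\H(k\thR)/\J(k\thR)}$ behaves very differently in the propagating regime (small $n$, where $n\lesssim kR$) and the evanescent regime (large $n$): in the latter $\Jv$ and $\Hv$ of order $\nu=n$ (or $\ell+\tfrac12$) grow/decay super-exponentially, and one must show the product $\abs{\H(k\thR)}\cdot\abs{\J(k\tr)}/\abs{\J(k\thR)}$ stays damped by the PML. The plan is to write $\xi$-exponents from \eqref{eq:Modi-Bessel:1}--\eqref{eq:Modi-Bessel:2} at the arguments $-\i k\tr$ and $-\i k\thR$, use the monotonicity properties \eqref{eq:Modi-Bessel:err:1}--\eqref{eq:Modi-Bessel:err:3} of $\Re(\xi)$ to show that $\Re\xi(-\i k\thR)-\Re\xi(-\i k\tr)$ is bounded below by a positive multiple of $k\siz L$, and then invoke the lower bound $k\siz L\ge\max\{2kR+\sqrt3 kL,10\}$ precisely to dominate the residual growth from the large-order Bessel factors and to keep the error terms $\eta_j$ small (the "$10$" guaranteeing $\abs{\hat M(\nu,z)}<1$ for the relevant orders).

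The remaining steps are routine once these uniform-in-mode bounds are in place: combine the particular-part estimate (inherited from \eqref{eq:stab:un:2d}/\eqref{eq:stab:un:3d}) with the correction-part estimate, observe both are of the form $k^{-2}\int r^{d-1}\abs{f_n}^2$, sum over $n$ (resp. over $\ell,m$) using the Parseval identities, and take square roots to obtain \eqref{eq:stab:L2}. Throughout, the hypothesis $R\eqsim\hR\eqsim 1$ lets me treat all powers of $r$, $R$, $\hR$ as harmless $O(1)$ factors and absorb them into the implied constants.
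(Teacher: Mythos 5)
Your plan misses the central structural device of the paper's proof, and the purely Bessel-theoretic route you propose in its place breaks down at concrete points. The paper's proof is \emph{not} carried out entirely through the explicit representation \eqref{eq:hun}: its key ingredient is the coercivity Lemma~\ref{lem:stab:an} for the radial sesquilinear form $a_\nu$ in \eqref{eq:stab:4}. That lemma does two things your proposal has no substitute for. First, for all modes with $n^2\geq k^2R^2+\lam k\hR^2$ (the evanescent regime), the estimate \eqref{eq:stab:5} gives \eqref{eq:stab:12} in one stroke, with no Bessel functions at all (Case 1). Second, for the propagating modes, \eqref{eq:stab:6} yields \eqref{eq:stab:15}, which bounds $\int_R^{\hR} r\abs{\hu_n}^2\,dr$ by the data plus $\int_0^R r\abs{\hu_n}^2\,dr$; consequently the explicit formula and the splitting $p_n^{(1)}+p_n^{(2)}+p_n^{(3)}$ are only ever used on $(0,R)$, where $\tr=r$ is real. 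Your plan instead requires estimating the representation \eqref{eq:hun} on the whole interval $(0,\hR)$ and for \emph{all} $n$, and this is where it fails. For the particular part on $(R,\hR)$, the one-sided bounds \eqref{eq:Hankel:bound} and \eqref{eq:Bessel:bound} are insufficient: they bound $\abs{\J(k\tr)}\,\abs{\H(k\tt)}$ (for $t\geq r$) by $e^{k\siz(r-R)}e^{-k\siz(t-R)(1-x^2/\abs{k\tt}^2)^{1/2}}\abs{\H(x)}$, whose exponential rates do not cancel and whose factor $\abs{\H(x)}$ is enormous for $n\gtrsim kR$; moreover they lose the algebraic prefactors (compare \eqref{eq:stab:30}--\eqref{eq:stab:33:1} for $n=0$, where the factor $\abs{k\tt}^{-1/2}$ is exactly what produces the needed $k^{-2}$), so at best you obtain $O(1)\Lt{f_n}{}^2$ rather than $O(k^{-2})\Lt{f_n}{}^2$.

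The second gap concerns the correction term $\hC_n\J(k\tr)$ for large orders. The mechanism you invoke --- a gap $\Re\,\xi(-\i k\thR)-\Re\,\xi(-\i k\tr)\gtrsim k\siz L$ enforced by \eqref{eq:assumption} --- is precisely the paper's Case~2 mechanism, and it is valid \emph{only} because Case~2 restricts to $n^2< k^2R^2+\lam k\hR^2\leq k^2\siz^2L^2$: the paper's key inequality $-n\Re\big(\xi(w)\big)\leq \frac{n^2}{2k\siz L}-k\siz L\leq-\frac12 k\siz L$ becomes useless once $n>k\siz L$. In the deep evanescent regime $n\gg k\siz L$ one has $\Re\big(\xi(w)\big)\to-\infty$, the ratio $\abs{\H(k\thR)/\J(k\thR)}\approx e^{-2n\Re(\xi(w))}$ blows up super-exponentially, and the smallness of the correction term comes from an entirely different source: the product $\abs{\J(k\tt)\J(k\tr)}$ decays fast enough to dominate that ratio (a power-series/large-order argument), not from PML absorption. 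You would have to supply that argument; the paper never needs it because Case~1 disposes of those modes variationally. A further technical obstruction: Lemma~\ref{lem:prop:Modi-Bessel} requires $\abs{\arg z}\leq\pi/4$, which for $z=-\i k\tt/\nu$ holds only when $\siz(t-R)\geq t$, i.e.\ $t\geq\bR=\frac{\siz}{\siz-1}R$; on $(0,\bR)$ --- in particular everywhere in $\Om$ --- your proposed $\xi$-comparison between $-\i k\tr$ and $-\i k\thR$ cannot even be set up, which is why the paper resorts to the crude bound \eqref{eq:Bessel:bound} there (cf.\ \eqref{eq:stab:24}). In short, recovering the theorem along your route would require substantially sharper, uniform-in-$n$ two-sided asymptotics in the PML region plus a separate large-order argument; the paper's Lemma~\ref{lem:stab:an} is what makes all of that unnecessary.
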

\begin{remark}
(i) Clearly the assumption \eqref{eq:assumption} is not strict for high wave number problems.

(ii) The assumption of $\hR \eqsim 1$ is merely for the ease of presentation. It can be removed but the formulations of the results would be more complicated.

(iii) It is possible to  extend the results of this paper to variable PML medium properties, in particular, the following parameters considered in \cite{chen2005}, 
\begin{equation*} 
\si (r)=\left\{
\begin{aligned}
& 0, & 0\leq r \leq R, \\
& \siz\left(\frac{r-R}{\hR-R}\right)^m,  & r>R,
\end{aligned}
\right.
\end{equation*}
where  $m\in \N$ and $\siz >0$ are constants. For example, for the 1D case, some simple calculations show that  \eqref{eq:stab:L2} holds if $\frac{k\siz L}{m+1}\geq 1$. For the 2D and 3D cases,   by combining the ideas from this paper and  \cite{chen2005,cf06}, the same stability estimate may be proved under some appropriate modifications of the conditions of this paper.  The results on variable PML medium properties will be reported in a future work. 

(iv) Since the proof of Theorem \ref{thm:stab:L2} for 1D case is simple by applying \eqref{eq:hun:1d}, we omit it to save space.

\label{rm:varPML}
\end{remark}

\subsubsection{Proof of Theorem \ref{thm:stab:L2} for 2D case} 
In order to analyze $\hu_n$ defined by the equations \eqref{eq:PML:ODE:2d}--\eqref{eq:PML:ODE:bound:2}, we introduce the following sesquilinear form
\begin{equation}\label{eq:stab:4}
a_\nu(w,v):=\inthR\left( \frac{\be}{\al} r \frac{dw}{dr}\frac{d\bar{v}}{dr}+\Big(\frac{\nu^2}{r}\frac{\al}{\be}-k^2\al\be r\Big)w\bar{v} \right) dr .
\end{equation}
The following lemma gives some coercivity properties of $a_\nu$.
\begin{lemma}\label{lem:stab:an}
Assume that $\siz\geq\sqrt{3}$. For any $v\in H^1((0,\hR))$ and $v(\hR)=0$, there exists a constant $\lam= \frac{R^{3/2}}{3\siz\hR^2L^{1/2}}$ such that
\begin{enumerate}
\item[\rm (i)] If $\nu^2\geq k^2R^2+\lam k\hR^2$, then
\end{enumerate}
\begin{equation}\label{eq:stab:5}
\Re\big(a_\nu(v,v)\big) \gtrsim \inthR\left(\frac{1}{1+\siz^2}r\abs{v_r}^2+\lam kr\abs{v}^2\right) dr
\end{equation}
\begin{enumerate}
\item[\rm (ii)] If $\nu^2 < k^2R^2+\lam k\hR^2$, then
\end{enumerate}
\begin{equation}\label{eq:stab:6}
-\Im\big(a_\nu(v,v)\big) \gtrsim \intRhR\left(\frac{\siz R}{(1+\siz^2)\hR}r\abs{v_r}^2+\lam kr\abs{v}^2\right) dr.
\end{equation}
\end{lemma}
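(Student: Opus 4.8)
The plan is to start from explicit real and imaginary parts of $a_\nu(v,v)$. Setting $w=v$ in \eqref{eq:stab:4} and separating the real and imaginary parts of $\be/\al$, $\al/\be$, and $\al\be$ gives
\begin{align*}
\Re a_\nu(v,v) &= \inthR\Big(\tfrac{1+\si\de}{1+\si^2}r|v_r|^2 + \tfrac{\nu^2}{r}\tfrac{1+\si\de}{1+\de^2}|v|^2 - k^2(1-\si\de)r|v|^2\Big)dr,\\
-\Im a_\nu(v,v) &= \inthR\Big(\tfrac{\si-\de}{1+\si^2}r|v_r|^2 - \tfrac{\nu^2}{r}\tfrac{\si-\de}{1+\de^2}|v|^2 + k^2(\si+\de)r|v|^2\Big)dr.
\end{align*}
On $(0,R)$ one has $\si=\de=0$, so the integrand of $-\Im a_\nu$ vanishes there (consistent with the right-hand side of (ii) being supported on $(R,\hR)$), while on $(R,\hR)$ I would use the identities $\si-\de=\siz R/r$, $\si\ge\de$, $(1-\si\de)r = r-\siz^2(r-R)$, and $(\si+\de)r=\siz(2r-R)$ repeatedly. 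Both cases are then read off these two formulas.

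Case (i) I would argue pointwise in $r$. On $(0,R)$, since $\nu^2\ge k^2R^2+\lam k\hR^2$, the potential coefficient is $\tfrac{\nu^2}{r^2}-k^2\ge \tfrac{\nu^2}{R^2}-k^2\ge \lam k\hR^2/R^2\gtrsim\lam k$, while $\tfrac{1+\si\de}{1+\si^2}=1\ge\tfrac1{1+\siz^2}$. On $(R,\hR)$ the goal is the pointwise bound $\tfrac{\nu^2}{r}\tfrac{1+\si\de}{1+\de^2}-k^2(1-\si\de)r\ge\lam k\,r$. Splitting $\nu^2=k^2R^2+(\nu^2-k^2R^2)$, the first piece dominates $k^2(1-\si\de)r$: clearing denominators reduces this to an elementary polynomial inequality in $a:=r-R$ with nonnegative left-hand side when $\siz^2\ge3$ (equivalently $\tfrac{1+\si\de}{1+\de^2}\ge1$ since $\si\ge\de$). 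For the surplus I would use $\tfrac1r\tfrac{1+\si\de}{1+\de^2}=\tfrac{r+\siz^2a}{r^2+\siz^2a^2}\ge\tfrac1r$ (valid since $r\ge a$) to get $(\nu^2-k^2R^2)\tfrac{1+\si\de}{r(1+\de^2)}\ge\lam k\hR^2/r\ge\lam k\,r$. Finally $\tfrac{1+\si\de}{1+\si^2}\ge\tfrac1{1+\siz^2}$ dispatches the gradient term, so (i) holds with absolute constants.

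Case (ii) is the heart of the lemma. The gradient term $\tfrac{\si-\de}{1+\si^2}r|v_r|^2=\tfrac{\siz R}{1+\siz^2}|v_r|^2$ and the term $k^2(\si+\de)r|v|^2=k^2\siz(2r-R)|v|^2$ are positive; the obstruction is the indefinite term $-\tfrac{\nu^2}{r}\tfrac{\si-\de}{1+\de^2}|v|^2$. After peeling off the target gradient contribution (leaving a leftover weighted by $\hR-r$) and using $\nu^2<k^2R^2+\lam k\hR^2$, the problem reduces to a weighted Hardy-type inequality
\[
\intRhR\Big(\frac{\siz R(\hR-r)}{(1+\siz^2)\hR}|v_r|^2+\siz k^2\psi(r)|v|^2\Big)dr\ \gtrsim\ \intRhR\Big(\lam k\,r+\frac{\siz\lam k\hR^2R}{r^2}\Big)|v|^2dr,
\]
where $\psi(r)=2r-R-R^3/r^2\ge0$ satisfies $\psi(R)=0$ and $\psi(r)\sim 4(r-R)$ near $R$. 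Since $\psi$ vanishes linearly at $r=R$, the potential $k^2\psi$ controls the right-hand side pointwise only outside a thin layer $r-R\ls\lam/k$; inside that layer I would invoke $v(\hR)=0$ and a Poincar\'e/Hardy estimate, using the $L^2$-mass furnished by the $k^2\psi$ term on the bulk to control the boundary value entering the layer and trading the residual mass against the leftover gradient energy. The precise constant $\lam=R^{3/2}/(3\siz\hR^2L^{1/2})$ is chosen to balance this trade, keeping the layer thin enough for the residual mass to be absorbed.

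The main obstacle is exactly this last step: the linear vanishing of $\psi$ at the PML interface $r=R$ makes the positive potential too weak there to absorb the indefinite $\nu^2$-term, so the coercivity (ii) genuinely hinges on a carefully weighted Hardy inequality together with the fine-tuned value of $\lam$. By contrast, the extraction of the real/imaginary parts and all of case (i) are routine.
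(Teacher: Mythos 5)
Your proposal is correct, and its skeleton matches the paper's: the same real/imaginary-part formulas, a pointwise bound on the potential coefficient for case (i), and a thin-layer analysis near the interface $r=R$ for case (ii). Case (i) is in fact the paper's proof in different clothes: your polynomial inequality in $a=r-R$ (whose coefficients $2(\siz^2-1)R^2$, $(\siz^2-3)R$, $\siz^4-1$ are all nonnegative exactly when $\siz^2\ge 3$) is precisely the paper's claim that $g(r)=\frac{1+\de^2}{1+\si\de}(1-\si\de)k^2r^2$ attains its maximum $k^2R^2$ at $r=R$; note only that your parenthetical ``equivalently $\frac{1+\si\de}{1+\de^2}\ge 1$'' is a slip, since that inequality holds whenever $\si\ge\de$ and is not what encodes $\siz^2\ge3$. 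Where you genuinely diverge is case (ii). The paper bounds the indefinite term crudely by $\si\nu^2/r$, absorbs $\si k^2R^2/r$ into $k^2\si r$ (leaving the potential $k^2\de r=k^2\siz(r-R)$ plus an $O(\siz\lam k)$ residual), and---this is the key bookkeeping choice---keeps \emph{half} of the nondegenerate gradient term $\frac{\siz R}{2(1+\siz^2)}\intRhR\abs{v_r}^2\,dr$, so the layer $[R,R+\tau/k]$ is dispatched by the one-line Poincar\'e bound $\abs{v(r)}^2\le L\intRhR\abs{v_r}^2\,dr$ from $v(\hR)=0$, the weight $r$ in \eqref{eq:stab:6} being recovered via $r\le\hR$ at the cost of a factor $2$. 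You instead peel off the \emph{full} target gradient, so your leftover gradient carries the weight $\hR-r$, which degenerates at $\hR$; with that weight the literal Poincar\'e argument from $v(\hR)=0$ fails (Cauchy--Schwarz against $(\hR-s)^{-1}$ diverges logarithmically at $\hR$), so the bulk-averaging device you sketch is not optional but essential: pick a band, say $[R+\tau/k,R+2\tau/k]$, where $\siz k^2\psi\gtrsim\siz k\tau$, bound the \emph{average} of $\abs{v}^2$ over that band by the potential term, and connect layer points to the band with the leftover gradient, whose weight is uniformly of size $\siz RL/\big(2(1+\siz^2)\hR\big)$ there because the band sits far from $\hR$. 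Carried out this way with $\tau\eqsim\lam$ the accounting closes (the layer's demand is $O(k^{-1})$ times the gradient term plus a fixed fraction of the potential term), so your route does prove \eqref{eq:stab:6}---and, as a by-product, never actually uses $v(\hR)=0$. As for what each approach buys: yours keeps a slightly sharper potential ($\si-\de=\siz R/r$ exactly, giving $\psi(r)=2r-R-R^3/r^2\ge 2(r-R)$) and the exact target gradient weight, but these gains are invisible behind the generic constants in $\gtrsim$; the paper's half-gradient trick buys a much shorter layer argument and is precisely what lets it use the boundary condition so simply. Finally, the specific value $\lam=\frac{R^{3/2}}{3\siz\hR^2L^{1/2}}$ is not really ``balancing'' your trade---any $\lam\ls 1/\siz$ would do in your framework with generic constants---it is the value the paper needs so that its constant-free intermediate inequalities, such as \eqref{eq:stab:11}, come out cleanly.
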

\begin{proof}
From \eqref{eq:stab:4} and \eqref{def:medium}, we get
\begin{align}
\Re\big(a_\nu(v,v)\big) & = {} \inthR \left(\frac{1+\si \de}{1+\si ^2}r\abs{v_r}^2 + \Big(\frac{1+\si \de}{1+\de^2}\frac{\nu^2}{r}-k^2(1-\si \de)r\Big)\abs{v}^2 \right) dr \label{eq:stab:7} \\
-\Im\big(a_\nu(v,v)\big) & = {} \intRhR \left(\frac{\si -\de}{1+\si ^2}r\abs{v_r}^2 + \Big(\frac{\de-\si }{1+\de^2}\frac{\nu^2}{r}+k^2(\si +\de)r\Big)\abs{v}^2 \right) dr \label{eq:stab:8}
\end{align}

We first prove (i). From $0\leq\de\leq\sigma$ and \eqref{eq:stab:7}, it suffices to prove the following inequality
\begin{equation*}
\frac{1+\si\de}{1+\de^2}\frac{\nu^2}{r}-k^2(1-\si\de)r \geq \lam kr.
\end{equation*}
 The following inequality is a sufficient condition of the above one:
\begin{equation}\label{eq:stab:8:1}
\nu^2 \geq \frac{1+\de^2}{1+\si\de}(1-\si\de)k^2 r^2+\lam kr^2 =: g(r) + \lam kr^2
\end{equation}
Since $\siz^2\geq 3$, it can be shown that $R$ is the maximum point of $g(r)$ by verifying its monotonicity. That is, 
\[g(r)\leq g(R)=k^2R^2,\quad r\in (0,\hR)\]
and hence, \eqref{eq:stab:8:1} holds.

Next we prove (ii). From \eqref{eq:stab:8} and \eqref{def:medium}, we get
\begin{equation*}
\begin{aligned}
-\Im\big(a_\nu(v,v)\big) & \geq {} \intRhR \left(\frac{\si R}{1+\si^2}\abs{v_r}^2 + \Big(-\si\frac{k^2R^2+\lam k\hR^2}{r}+k^2(\si+\de)r\Big)\abs{v}^2 \right) dr \\
 & \geq {} \intRhR \left(\frac{\si R}{1+\si^2}\abs{v_r}^2 + \Big(k^2\de r - \frac{\si\lam k\hR^2}{R}\Big)\abs{v}^2 \right) dr
\end{aligned}
\end{equation*}
Note that $v(\hR)=0$, there holds
\[
\abs{v(r)}^2=\abs{\intrhR v_r(s) ds}^2 \leq\left(\intRhR\abs{v_r}dr\right)^2\leq L\intRhR\abs{v_r}^2dr,\quad R\leq r\leq\hR, 
\]
hence,
\begin{equation}\label{eq:stab:10}
\frac{k}{\tau L}\int_R^{R+\tau /k} \abs{v}^2 dr\leq\intRhR\abs{v_r}^2 dr.
\end{equation}
where $\tau$ should be chosen as a positive constant independent of $k$. Then
\begin{equation*}
\begin{aligned}
-\Im\big(a_\nu(v,v)\big) \geq &{} \frac{\siz R}{2(1+\siz^2)}\intRhR \abs{v_r}^2 dr + \frac{\siz R}{2(1+\siz^2)}\frac{k}{\tau L}\int_R^{R+\tau /k} \abs{v}^2 dr \\
 &{} - \int_R^{R+\tau /k}\frac{\si\lam k\hR^2}{R}\abs{v}^2 dr +\int_{R+\tau /k}^{\hR} \Big(k^2\de r - \frac{\si\lam k\hR^2}{R}\Big)\abs{v}^2 dr \\
\geq &{} \frac{\siz R}{2(1+\siz^2)}\intRhR \abs{v_r}^2 dr + \int_R^{R+\tau /k}k\siz\Big( \frac{R}{2\tau L(1+\siz^2)}-\lam\frac{\hR^2}{R}\Big) \abs{v}^2 dr \\
 &{} + \int_{R+\tau /k}^{\hR} k\siz\Big(\tau - \lam\frac{\hR^2}{R}\Big)\abs{v}^2 dr,
\end{aligned}
\end{equation*}
we take $\tau = \frac{R}{2\tau L(1+\siz^2)}$, i.e., $\tau= \frac{R^{1/2}}{\sqrt{2(1+\siz^2)}L^{1/2}}$. To get \eqref{eq:stab:6}, it remains to prove that
\begin{equation}\label{eq:stab:11}
k\siz\Big(\tau - \lam\frac{\hR^2}{R}\Big) \geq \lam k\hR ,
\end{equation}
Note that $\siz\geq \sqrt{3}$, we choose $\lam= \frac{R^{3/2}}{3\siz\hR^2L^{1/2}}$. Some simple calculations lead to
\[
\lam \leq \frac{R^{3/2}\siz}{2\sqrt{2(1+\siz^2)}\hR^2 L^{1/2}} \leq \frac{\tau}{\frac{\hR}{\siz}+\frac{\hR^2}{R}}
\]
which implies \eqref{eq:stab:11}, and hence, \eqref{eq:stab:6} holds. This completes the proof of the lemma. 
\end{proof}

Clearly, \eqref{eq:assumption} is a sufficient condition of the following set of conditions
\begin{equation}\label{assumption:1}
\begin{aligned}
kR\geq 1, \siz \geq\sqrt{3}, \siz L\geq\hR, k\siz L\geq 10, L\geq \frac{2R}{\siz -1},\mbox{and } \siz^2L^2\geq R^2+\frac{\lam}{k}\hR^2
\end{aligned}
\end{equation}
but simpler. Here $\lam$ is defined as in  Lemma \ref{lem:stab:an}. 


Next we turn to estimate $\Lt{\hu}{\D}^2$. It suffices to prove that
\begin{equation}\label{eq:stab:12}
\inthR r\abs{\hu_n(r)}^2 dr \ls{} \frac{1}{k^2} \inthR r\abs{f_n(r)}^2 dr.
\end{equation}
We divide the proof of \eqref{eq:stab:12} into three cases.

\textbf{Case 1}: $n^2\geq k^2R^2+\lam k\hR^2$.  Note from \eqref{eq:PML:ODE:2d}-- \eqref{eq:PML:ODE:bound:1} and \eqref{eq:stab:4} that $\hu_n$ satisfies the following variational formulation:
\begin{equation}\label{eq:stab:3}
a_n(\hu_n,v) = \inthR rf_n\bar{v} dr,\quad \forall v\in H_0^1((0,\hR)).
\end{equation}
 It follows from \eqref{eq:stab:5} that
\begin{equation*}
\begin{aligned}
\inthR\lam kr\abs{\hu_n}^2 dr & \ls  \Re\big(a_n(\hu_n,\hu_n)\big) \ls \abs{\inthR r f_n \bar\hu_n dr} \\
 & \ls  \left(\frac{1}{\lam k}\inthR r\abs{f_n}^2dr\right)^{\frac 12} \left(\inthR \lam kr\abs{\hu_n}^2dr\right)^{\frac 12} ,
\end{aligned}
\end{equation*}
Thus,
\begin{equation}\label{eq:stab:14}
\inthR r\abs{\hu_n}^2 dr \ls  \frac{1}{\lam^2 k^2}\inthR r\abs{f_n}^2dr.
\end{equation}

\textbf{Case 2}: $1\leq n^2 < k^2R^2+\lam k\hR^2$. Similar to \eqref{eq:stab:14}, an application of \eqref{eq:stab:6} gives
\begin{equation}\label{eq:stab:15}
\intRhR r\abs{\hu_n}^2 dr \ls \frac{1}{\lam^2 k^2}\inthR r\abs{f_n}^2 dr + \intR r\abs{\hu_n}^2 dr.
\end{equation}

Next we estimate the last term in \eqref{eq:stab:15}. From \eqref{eq:hun} we have
\begin{equation}\label{eq:stab:18}
\hu_n(r) = p_n^{(1)}(r)+p_n^{(2)}(r)+p_n^{(3)}(r), \quad 0\leq r<R,
\end{equation}
where
\begin{align*}
& p_n^{(1)}(r) := \frac{\pi\i}{2}\J (kr)\intrR \H(kt)f_n(t)t dt + \frac{\pi\i}{2}\H (kr)\intr \J(kt)f_n(t)t dt \\
& p_n^{(2)}(r) := \frac{\pi\i}{2}\J(kr)\intRhR \H(k\tt)f_n(t)t dt \\
& p_n^{(3)}(r) := \hC_n\J(kr)
\end{align*}
Firstly, compared with \eqref{eq:un}, $p_n^{(1)} e^{\i n\theta}$ is the exact solution to the Helmholtz problem \eqref{eq:Helm}--\eqref{eq:Somm} with $f=f_n e^{\i n\theta}\chi_\Om$, here $\chi_\Om$ denotes the characteristic function of $\Om$. From \eqref{eq:stab:un:2d}, there holds
\begin{equation}\label{eq:stab:19}
\intR r\abs{p_n^{(1)}(r)}^2 dr \ls \frac {1}{k^2} \intR r\abs{f_n(r)}^2 dr.
\end{equation}
Secondly, it follows from \eqref{eq:Hankel:bound} that
\[
\abs{p_n^{(2)}(r)} \leq \abs{\frac{\pi\i}{2}\J(kr)\intRhR\abs{\H(kt)f_n(t)t}dt} = \Bigg\vert\underbrace{\frac{\pi\i}{2}\J(kr)\intRhR\H(kt)g_n(t)tdt}_{=:\psi(r)}\Bigg\vert
\]
where $g_n=f_n(t)\sgn\big(\H(kt)f_n(t)\big)$. Similarly, $\psi e^{\i n\theta}$ is the exact solution of \eqref{eq:Helm}--\eqref{eq:Somm} with $f=g_n e^{\i n\theta}\chi_{\hOm}$ and replacing $\Om$ by $\D$. Applying \eqref{eq:stab:un:2d}, we have
\begin{equation}\label{eq:stab:20}
\begin{aligned}
\intR r\abs{p_n^{(2)}(r)}^2 dr \leq \intR r\abs{\psi(r)}^2 dr & \ls \frac{1}{k^2}\intRhR r\abs{g_n(r)} ^2 dr \\
 & \ls \frac{1}{k^2}\intRhR r\abs{f_n(r)}^2 dr.
\end{aligned}
\end{equation}
It remains to estimate $\intR r\big|p_n^{(3)}(r)\big|^2 dr$,  especially to analyze the $\hC_n$ defined by \eqref{eq:hC}. Noting that $J_{-n}=(-1)^n\J$ and $H_{-n}^{(1)}=(-1)^n\H$, we only prove for $n\geq 1$. Using the uniform asymptotic expansions \eqref{eq:Modi-Bessel:1}--\eqref{eq:Modi-Bessel:2} and the relations \eqref{eq:Bessel:relation}, we get
\begin{equation*}
\frac{\H(k\thR)}{\J(k\thR)} = -2\i e^{-n\pi\i}e^{-2n\xi(w)} \frac{\left(1+\eta_2(n,w)\right)\left(1+\eta_1(n,\infty)\right)}{1+\eta_1(n,w)}
\end{equation*}
where $w=\frac{k\siz L}{n}-\i \frac{k\hR}{n}$. Noting from \eqref{assumption:1} that $\siz L\geq \hR$, which implies $\abs{\arg w}\leq \frac{\pi}{4}$. From \eqref{eq:Modi-Bessel:err:eta}, there holds
\begin{equation*}
\abs{\frac{\H(k\thR)}{\J(k\thR)}} \ls e^{-2n\Re\left(\xi(w)\right)} \frac{\big(1+\hat{M}(n,w)\big)\big(1+\hat{M}(n,\infty)\big)}{\big|1-\hat{M}(n,w)\big|},\ n\geq 1.
\end{equation*}
Applying \eqref{eq:Modi-Bessel:err:M} and \eqref{assumption:1}, 
\[
M(\infty)=\frac {1}{12}+\frac{1}{6\sqrt{5}}\leq 0.16,~M(w)\leq 0.16+\frac{\hR}{\siz L}\frac{n}{k\siz L}\leq 0.16+\frac{n}{10}.
\]
and hence for $n\geq 1$,
\begin{align}
&0< \hat{M}(n,\infty)\leq \frac{0.32}{n}\exp\left(\frac{0.32}{n}\right)\leq 0.45<1, \label{Minfty}\\
&0< \hat{M}(n,w)\leq \left(\frac{0.32}{n}+\frac 15\right)\exp\left(\frac{0.32}{n}+\frac 15\right)\leq 0.88<1.
\end{align}
These lead to
\begin{equation}\label{eq:stab:22}
\abs{\frac{\H(k\thR)}{\J(k\thR)}} \ls e^{-2n\Re\left(\xi(w)\right)},\ n\geq 1.
\end{equation}
In addition, from the Cauchy-Schwarz inequality,
\begin{equation}\label{eq:stab:22:1}
\abs{\inthR \J(k\tt)f_n(t)t dt} \leq \left(\inthR t\abs{\J(k\tt)}^2 dt\right)^{\frac 12}\left(\inthR t\abs{f_n(t)}^2 dt\right)^{\frac 12}
\end{equation}
To analyze $\J(k\tt)$, we denote by $\bR = \frac{\siz}{\siz-1}R$. Since from \eqref{assumption:1} that $L\geq \frac 2{\siz-1}R$, there holds $\bR\leq R+\frac L2\leq\hR$. We use the splitting
\begin{equation*}
\inthR t\abs{\J(k\tt)}^2 dt \leq \int_0^{\bR} t\abs{\J(k\tt)}^2 dt + \int_{\bR}^{\hR} t\abs{\J(k\tt)}^2 dt =: P_1+P_2.
\end{equation*}
For the first part $P_1$, from \eqref{eq:Bessel:bound}, we get
\begin{equation}\label{eq:stab:24}
P_1 \leq \frac{R^2}{2} + \int_R^{\bR} te^{2k\siz (t-R)} dt \ls 1 + \hR^2 e^{2k\siz (\bR-R)}\ls e^{k\siz L}.
\end{equation}
Next, we turn to estimate $P_2$. Let $W(t)=\frac{k\si_0(t-R)}{n}-\i \frac{kt}{n}$. Clearly, $W(\hR)=w$ and 
\[
\abs{\arg W}\leq\arctan\frac{\bR}{\si_0(\bR-R)}=\frac{\pi}{4},\quad n^2\abs{1+W^2}\ge n^2\big(\Re(W)\big)^2\overset{\eqref{assumption:1}}{\gtrsim} 1,\quad \forall t\in [\bR,\hR].
\]
From \eqref{eq:Bessel:relation}, Lemma~\ref{lem:prop:Modi-Bessel}, and \eqref{Minfty}, we have, for $\bR\leq t\leq\hR$,
\begin{equation*}
\abs{\J(k\tt)} = \abs{I_n(nW)} \leq \frac{1}{\sqrt{2n\pi}} \frac{e^{n\Re(\xi(W))}}{\abs{1+W^2}^{1/4}} \frac{\big|1+\hat{M}(n,W)\big|}{\big|1-\hat{M}(n,\infty)\big|} \ls e^{n\Re(\xi(W))}\ls e^{n\Re(\xi(w))},
\end{equation*}
and hence,
\begin{equation*}
P_2 = \int_{\bR}^{\hR} t\abs{\J(k\tt)}^2 dt \ls e^{2n\Re(\xi(w))},
\end{equation*}
which together with \eqref{eq:stab:24} gives
\begin{equation*}
\inthR t\abs{\J(k\tt)}^2 dt \ls e^{k\siz L}+e^{2n\Re(\xi(w))}.
\end{equation*}
By inserting the above estimate, \eqref{eq:stab:22:1}, and \eqref{eq:stab:22} into \eqref{eq:hC} we get
\begin{equation*}
\abs{\hC_n} \ls e^{-2n\Re\left(\xi(w)\right)}\left(e^{\frac 12 k\siz L}+e^{n\Re\left(\xi(w)\right)} \right)\left(\inthR r\abs{f_n(r)}^2 dr\right)^{\frac 12}.
\end{equation*}
From  Lemma~\ref{lem:prop:Modi-Bessel} (i) and noting that $n^2< k^2R^2+\lam k\hR^2\overset{\eqref{assumption:1}}{\leq} k^2\siz^2L^2$, there holds
\begin{equation*}
-n\Re(\xi(w))\leq \frac{n^2}{2k\siz L}-k\siz L\leq -\frac{1}{2} k\siz L,
\end{equation*}
which leads to
\begin{equation}
\abs{\hC_n} \ls e^{-\frac 12 k\siz L}\left(\inthR r\abs{ f_n(r)}^2 dr\right)^{\frac 12}.
\end{equation}
Since $\abs{\J(kr)}\leq 1$, we get
\begin{equation}\label{eq:stab:26}
\intR r\abs{p_n^{(3)}(r)}^2 dr \ls e^{-k\siz L}\inthR r\abs{f_n(r)}^2 dr.
\end{equation}
The combination of \eqref{eq:stab:19}, \eqref{eq:stab:20}, and \eqref{eq:stab:26} leads to
\begin{equation}\label{eq:stab:27}
\intR r\abs{\hu_n(r)}^2 dr \ls \left(\frac {1}{k^2} + e^{-k\siz L}\right) \inthR r\abs{f_n(r)}^2 dr \ls \frac {1}{k^2}\inthR r\abs{f_n(r)}^2 dr,
\end{equation}
where we have used $k^2\siz^2 L^2e^{-k\siz L}\ls 1$ and $\siz L\gtrsim 1$ to derive the last inequality.

By plugging \eqref{eq:stab:27} into \eqref{eq:stab:15} we conclude that \eqref{eq:stab:12} holds for Case 2.
 
\textbf{Case 3}: $n=0$.
For $0\leq r<R$, using the same splitting \eqref{eq:stab:18}, $\hu_0$ can be written as follows,
\begin{equation*}
\hu_0(r) = p_0^{(1)}(r)+p_0^{(2)}(r)+p_0^{(3)}(r).
\end{equation*}
Analogously, the estimates of \eqref{eq:stab:19} and \eqref{eq:stab:20} still hold for $n=0$, that is,
\begin{equation}\label{eq:stab:29}
\intR r\abs{p_0^{(j)}(r)}^2 dr \ls \frac{1}{k^2} \inthR r\abs{f_0(r)}^2 dr,\quad j=1,2.
\end{equation} 
For the term $p_0^{(3)}(r)=\hC_0 J_0(kr)$, it follows from \eqref{eq:Bessel:zero} and \eqref{eq:Bessel:zero:bound} that
\begin{equation}\label{eq:stab:30}
\abs{H_0^{(j)}(k\tt)} \eqsim \abs{k\tt}^{-\frac 12}e^{(-1)^j k\siz (t-R)},\quad R\leq t \leq \hR,\quad j=1,2. 
\end{equation}
Noting that $\Jv=\frac 12 (\Hv+H_\nu^{(2)})$, there holds
\begin{equation}\label{eq:stab:31}
\abs{\Jz(k\tt)}  \eqsim \abs{k\tt}^{-\frac 12}e^{k\siz (t-R)},\ R\leq t \leq \hR. 
\end{equation}
Then we get
\begin{equation}\label{eq:stab:32}
\abs{\frac{\Hz(k\thR)}{\Jz(k\thR)}} \eqsim e^{-2k\siz L},
\end{equation}
and 
\begin{equation}\label{eq:stab:33:1}
\intRhR t\abs{\Jz(k\tt)}^2 dt \ls \frac{1}{k}\intRhR \frac{t}{\abs{\tt}}e^{2k\siz (t-R)} dt\ls \frac{1}{k^2\siz}e^{2k\siz L}.
\end{equation}
Besides, since $\abs{\Jz(kt)}\leq 1$, we have
\begin{equation}\label{eq:stab:33}
\inthR t\abs{\Jz(k\tt)}^2 dt \ls R^2+\frac{1}{k^2\siz}e^{2k\siz L} \ls e^{2k\siz L}. 
\end{equation}
By inserting \eqref{eq:stab:32} and \eqref{eq:stab:33} into $\hC_0$ given by \eqref{eq:hC} we get
\begin{equation}\label{eq:stab:34}
\abs{\hC_0} \ls e^{-k\siz L}\left(\inthR r\abs{f_0(r)}^2 dr\right)^{\frac 12}
\end{equation}
This leads to
\begin{equation*}
\intR r\abs{p_0^{(3)}(r)}^2 dr \ls e^{-2k\siz L}\inthR r\abs{f_0(r)}^2 dr,
\end{equation*}
which together with \eqref{eq:stab:29} gives
\begin{equation}\label{eq:stab:35}
\intR r\abs{\hu_0(r)}^2 dr \ls \left(\frac{1}{k^2} + e^{-2k\siz L}\right) \inthR r\abs{f_0(r)}^2 dr \ls \frac{1}{k^2}\inthR r\abs{f_0(r)}^2 dr.
\end{equation}
Since \eqref{eq:stab:15} still holds for $n=0$, \eqref{eq:stab:12} follows by plugging \eqref{eq:stab:35} into \eqref{eq:stab:15}. This completes the proof of Theorem~\ref{thm:stab:L2} for the 2D case.


\subsubsection{Proof of  Theorem~\ref{thm:stab:L2} for 3D case}
Similar to the 2D case, it suffices to prove that
\begin{equation}\label{eq:stab:3d:0}
\inthR r^2\abs{\hu_\l^m(r)}^2 dr \ls{} \frac{1}{k^2} \inthR r^2\abs{f_\l^m(r)}^2 dr,\quad \text{for}~\l\in\N,~m\in\Z\cap[-\l,\l].
\end{equation}
By comparing \eqref{eq:hulm}--\eqref{eq:hC2} with \eqref{eq:hun}--\eqref{eq:hC}, similar to Cases 1--2 of the proof of \eqref{eq:stab:12},  we have, for $(\l+\frac 12)^2\ge 1$,
\begin{align*}
\inthR r\abs{\tr^{\frac12}\hu_\l^m(r)}^2 dr \ls \frac{1}{k^2}\inthR r\abs{\be^{-\frac12}r^\frac12 f_\l^m(r)}^2 dr,
\end{align*}
which implies \eqref{eq:stab:3d:0} for $\l\ge 1$.
 
It remains to prove for $\l=m=0$. From \eqref{eq:hulm},
\begin{equation}\label{eq:lzero-1}
\begin{aligned}
\tr^{\frac12}\hu_0^0(r) = &~\frac{\pi\i}{2}J_{\frac 12}(k\tr)\int_r^{\hR}H_{\frac 12}^{(1)}(k\tt)\big(\be^{-\frac12}t^\frac12 f_0^0(t)\big)tdt \\
 &~+ \frac{\pi\i}{2}H_{\frac 12}^{(1)}(k\tr)\intr J_{\frac 12}(k\tt)\big(\be^{-\frac12}t^\frac12 f_0^0(t)\big)tdt + \hC_{\frac 12}J_{\frac 12}(k\tr).
\end{aligned}
\end{equation}
Noting from \eqref{eq:Modi-Bessel:zero} that, we have, 
\begin{equation}\label{eq:stab:3d:3}
\abs{J_{\frac 12}(kr)}\eqsim \abs{H_{\frac 12}^{(1)}(kr)}\eqsim \frac 1{k^\frac12 r^{\frac 12}},\quad \text{for $0<r<R$},
\end{equation}
and
\begin{equation}\label{eq:stab:3d:4}
\abs{J_{\frac 12}(k\tr)}\eqsim\frac{e^{k\siz(r-R)}}{k^{\frac12}\abs{\tr}^{\frac12}},~
\abs{H_{\frac 12}^{(1)}(k\tr)}\eqsim\frac{e^{-k\siz(r-R)}}{k^{\frac12}\abs{\tr}^{\frac12}},\quad \text{for $R\leq r\leq\hR$}.
\end{equation}
Therefore
\begin{equation}\label{eq:stab:3d:5}
\inthR r\abs{J_{\frac12}(k\tr)}^2 dr \ls \intR r\abs{J_{\frac12}(kr)}^2 dr+\intRhR r\abs{J_{\frac12}(k\tr)}^2 dr \ls \frac{e^{2k\siz L}}{k},
\end{equation}
and from \eqref{eq:hC2},
\begin{equation}\label{eq:stab:3d:6}
\begin{aligned}
\abs{\hC_{\frac12}} &{}\ls e^{-2k\siz L}\left(\inthR r\abs{J_{\frac12}(k\tr)}^2 dr\right)^{1/2}\left(\inthR r^2\abs{f_0^0(r)}^2 dr\right)^{1/2} \\
 &{}\ls \frac{e^{-k\siz L}}{\sqrt{k}}\left(\inthR r^2\abs{f_0^0(r)}^2 dr\right)^{1/2}.
\end{aligned}
\end{equation}
Combining \eqref{eq:stab:3d:5} and \eqref{eq:stab:3d:6}, we get
\begin{equation}\label{eq:lzero-2}
\inthR r\abs{\hC_{\frac12}J_{\frac12}(k\tr)}^2 dr\ls \frac{1}{k^2}\inthR r^2\abs{f_0^0(r)}^2 dr.
\end{equation}
In addition, by using \eqref{eq:stab:3d:3} and \eqref{eq:stab:3d:4}, we have
\begin{equation}\label{eq:lzero-3}
\begin{aligned}
 &\inthR r\abs{J_{\frac12}(k\tr)\intrhR H_{\frac12}^{(1)}(k\tt)\big(\be^{-\frac12}t^\frac12 f_0^0(t)\big)t dt}^2 dr \\
\ls &~\inthR\intrhR r\abs{J_{\frac12}(k\tr)}^2  t \abs{H_{\frac12}^{(1)}(k\tt)}^2 dtdr\inthR r^2\abs{f_0^0(r)}^2 dr \\
\ls &~\inthR\intrhR \frac{1}{k^2} dtdr\inthR r^2\abs{f_0^0(r)}^2 dr\ls\frac{1}{k^2}\inthR r^2\abs{f_0^0(r)}^2 dr,
\end{aligned}
\end{equation}
and similarly,
\begin{equation}\label{eq:lzero-4}
\inthR r\abs{H_{\frac12}^{(1)}(k\tr)\intr J_{\frac12}(k\tt)\big(\be^{-\frac12}t^\frac12 f_0^0(t)\big)t dt}^2 dr\ls \frac{1}{k^2}\inthR r^2\abs{f_0^0(r)}^2 dr.
\end{equation}
From \eqref{eq:lzero-1} and \eqref{eq:lzero-2}--\eqref{eq:lzero-4} we get \eqref{eq:stab:3d:0} for $\l=m=0$. The proof of Theorem~\ref{thm:stab:L2} is completed. \hfill\QED

\subsection{$H^1$-estimates and inf--sup condition}
From \eqref{eq:PML:Var:1}, \eqref{eq:enorm} and \eqref{eq:stab:L2}, we have
\begin{equation*}
\He{\hu}^2 = \Re\big(a(\hu,\hu)\big)+2k^2\Lt{\hu}{\D}^2 \ls \abs{\inD{f}{\hu}} +2k^2\Lt{\hu}{\D}^2 \ls \Lt{f}{\D}^2.
\end{equation*}
That is, the following $H^1$-stability estimates holds:
\begin{corollary}\label{cor:H1-stab}
Under the conditions of Theorem~\ref{thm:stab:L2}, there holds
\begin{equation}\label{eq:H1-stab}
\He{\hu}\ls \Lt{f}{\D}.
\end{equation}
\end{corollary}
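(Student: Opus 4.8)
The plan is to test the variational formulation \eqref{eq:PML:Var:1} against the solution itself and read off the energy norm directly from the resulting identity, using the $L^2$-stability estimate \eqref{eq:stab:L2} to close the argument. The point is that the energy norm \eqref{eq:enorm} is built precisely so that $\He{\hu}^2 = \Re\big(a(\hu,\hu)\big) + 2k^2\Lt{\hu}{\D}^2$, and the first summand is exactly the quantity that the variational formula controls when we choose the test function $v=\hu$.

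Concretely, I would first set $v=\hu$ in \eqref{eq:PML:Var:1} to obtain the identity $a(\hu,\hu)=\inD{f}{\hu}$; taking real parts and then absolute values gives $\Re\big(a(\hu,\hu)\big)\le\abs{\inD{f}{\hu}}$. An application of the Cauchy--Schwarz inequality on $L^2(\D)$ then yields $\abs{\inD{f}{\hu}}\le\Lt{f}{\D}\Lt{\hu}{\D}$. Substituting these into the definition of the energy norm produces
\begin{equation*}
\He{\hu}^2 \le \Lt{f}{\D}\Lt{\hu}{\D} + 2k^2\Lt{\hu}{\D}^2.
\end{equation*}

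It remains only to invoke Theorem~\ref{thm:stab:L2}, which under the stated hypotheses gives $\Lt{\hu}{\D}\ls\frac1k\Lt{f}{\D}$. The second term is then immediately $\ls\Lt{f}{\D}^2$, while the first is $\ls\frac1k\Lt{f}{\D}^2\le\Lt{f}{\D}^2$ since $k\gg 1$; taking square roots gives \eqref{eq:H1-stab}. The substantive difficulty of this estimate has already been discharged in the $L^2$-stability Theorem~\ref{thm:stab:L2}, so no genuine obstacle remains at this stage --- the only point requiring a little care is that one uses the exact identity $a(\hu,\hu)=\inD{f}{\hu}$ (rather than a mere bound) before passing to real parts, so that the coercivity-type quantity $\Re\big(a(\hu,\hu)\big)$ entering the energy norm is controlled cleanly.
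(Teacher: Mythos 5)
Your proposal is correct and follows exactly the paper's own argument: test \eqref{eq:PML:Var:1} with $v=\hu$, take real parts, apply Cauchy--Schwarz to $\inD{f}{\hu}$, and close both terms with the $L^2$-stability estimate \eqref{eq:stab:L2}. Nothing further is needed.
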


Next we consider the inf--sup condition of the truncated PML problem. 
\begin{theorem}\label{thm:mu} Under the conditions of Theorem~\ref{thm:stab:L2},  the following inf--sup condition of the truncated PML problem \eqref{eq:PML:all}--\eqref{eq:PML:bound:all} holds 
\begin{equation}\label{eq:inf-sup} 
\mu := \inf_{0\neq u\in H_0^1(\D)}\sup_{0\neq v\in H_0^1(\D)} \frac{\abs{a(u,v)}}{\He{u}\He{v}} \eqsim k^{-1}.
\end{equation}
\end{theorem}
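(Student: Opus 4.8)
The plan is to establish the two-sided bound \eqref{eq:inf-sup} by proving $\mu\gtrsim k^{-1}$ through a duality (Schatz/Aubin--Nitsche) argument built on the stability estimates of Theorem~\ref{thm:stab:L2} and Corollary~\ref{cor:H1-stab}, and $\mu\ls k^{-1}$ by testing against an explicit oscillatory quasi-mode, essentially following \cite{chandler2008wave}. A recurring ingredient I would isolate first is the elementary bound
\begin{equation*}
\He{v}\ge k\Lt{v}{\D}\qquad\forall\,v\in H_0^1(\D),
\end{equation*}
which follows at once from the formula for $\Re a(v,v)$ displayed after \eqref{eq:enorm}: adding $2k^2\Lt{v}{\D}^2$ turns the coefficient $(\si\de-1)$ of the zeroth-order term into $(\si\de+1)\ge 1$ while leaving the nonnegative gradient coefficients untouched, so that $\He{v}^2\ge k^2\Lt{v}{\D}^2$ (the 1D and 3D cases being entirely analogous, since $\Re A$ is positive semidefinite and $\Re B\le 1$ there as well).

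For the lower bound, I would set $S(u):=\sup_{0\neq v\in H_0^1(\D)}\abs{a(u,v)}/\He{v}$ and estimate $\He{u}^2=\Re a(u,u)+2k^2\Lt{u}{\D}^2$ termwise. The first summand is immediately $\le\abs{a(u,u)}\le S(u)\He{u}$. For the second, I would introduce the adjoint solution $z\in H_0^1(\D)$ of $a(w,z)=\inD{w}{u}$ for all $w$; since the adjoint problem is of the same type as \eqref{eq:PML:Var:1} (cf.\ the discussion around \eqref{w}), the analogue of Corollary~\ref{cor:H1-stab} gives $\He{z}\ls\Lt{u}{\D}$. Taking $w=u$ yields $\Lt{u}{\D}^2=a(u,z)$, whence $\Lt{u}{\D}^2\le S(u)\He{z}\ls S(u)\Lt{u}{\D}$, i.e.\ $\Lt{u}{\D}\ls S(u)$. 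Substituting back produces the quadratic inequality $\He{u}^2\le S(u)\He{u}+Ck^2S(u)^2$, which solves to $\He{u}\ls kS(u)$; taking the infimum over $u$ gives $\mu\gtrsim k^{-1}$.

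For the upper bound I would test with a cut-off plane wave. Fix a real, smooth $\chi$ compactly supported in $\Om$ with $\Lt{\chi}{\D}\eqsim 1$, and set $u_0:=\chi\,e^{\i kx_1}\in H_0^1(\D)$. Because $A=I$ and $B=1$ on $\supp u_0\subset\Om$, integrating by parts gives $a(u_0,v)=\inD{g}{v}$ for all $v\in H_0^1(\D)$, where $g:=-\Delta u_0-k^2u_0=-(\Delta\chi+2\i k\,\pa_1\chi)e^{\i kx_1}$ satisfies $\Lt{g}{\D}\ls k$. Hence $\abs{a(u_0,v)}\le\Lt{g}{\D}\Lt{v}{\D}\le k^{-1}\Lt{g}{\D}\He{v}\ls\He{v}$, so $S(u_0)\ls 1$, while the auxiliary bound gives $\He{u_0}\ge k\Lt{u_0}{\D}=k\Lt{\chi}{\D}\gtrsim k$. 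Therefore $\mu\le S(u_0)/\He{u_0}\ls k^{-1}$.

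The hard part is the lower bound, and within it the step that controls $k^2\Lt{u}{\D}^2$: this is exactly where the wave-number-explicit stability of the adjoint PML problem is needed, and it is the reason the delicate analysis of Theorem~\ref{thm:stab:L2} cannot be bypassed. I would also be careful that the stability constant used for $z$ is genuinely that of the PML adjoint problem on all of $H_0^1(\D)$, not merely the restriction to $H^1(\Om)$ of an original-problem bound, which is precisely why the inf--sup statement is posed on $H_0^1(\D)$. Once these facts are in hand, the remaining steps—the quadratic-inequality manipulation and the quasi-mode computation—are routine.
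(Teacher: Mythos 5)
Your proposal is correct and takes essentially the same approach as the paper: a duality argument based on the wave-number-explicit stability of the adjoint PML problem (the analogue of Corollary~\ref{cor:H1-stab}) for the lower bound, and an explicit oscillatory test function supported in $\Om$ for the upper bound, following \cite{chandler2008wave}. The differences are organizational only: the paper tests with $v=u+z$, where $z$ solves the dual problem with data $2k^2u$, so that $\Re\big(a(u,u+z)\big)=\He{u}^2$ holds exactly and no quadratic inequality is needed, and it uses the bubble $e^{\i kx_1}\big(\abs{x}^2-R^2\big)$ extended by zero in place of your smooth cutoff --- your compactly supported $\chi$ is in fact slightly cleaner, since it produces no interface term on $\Gamma$ when integrating by parts.
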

\begin{proof}
We first derive the lower bound for the inf--sup constant $\mu$. For every $0\neq u\in H_0^1(\D)$, take $g=2k^2u\in L^2(\D)$, and consider the following dual problem to \eqref{eq:PML:Var:1}: Find $z\in H_0^1(\D)$ such that
\begin{equation}\label{eq:dual:Var}
a(v,z) = \inD{v}{g},\quad \forall v\in H_0^1(\D).
\end{equation}
 Similar to the $H^1$-stability \eqref{eq:H1-stab}, we have
\begin{equation}\label{eq:inf-sup:1}
\He{z}\ls \Lt{g}{\D} \ls k^2\Lt{u}{\D} \ls k\He{u},
\end{equation}
 then
\begin{equation}\label{eq:inf-sup:2}
\He{u+z}\ls k\He{u}.
\end{equation}
On the other hand, from \eqref{eq:enorm} and \eqref{eq:dual:Var},
\begin{equation}\label{eq:inf-sup:3} 
\Re(a(u,u+z)) = \He{u}^2-2k^2\Lt{u}{\D}^2 + \Re(a(u,z)) = \He{u}^2.
\end{equation}
The combination of \eqref{eq:inf-sup:2}  and \eqref{eq:inf-sup:3} leads to the explicit lower bound on $\mu$:
\begin{equation}\label{eq:inf-sup:lower}
\mu \gtrsim \frac{1}{k}
\end{equation}

By following \cite{chandler2008wave}, the upper bounds on the inf--sup constant can be obtained by giving an example. Note that, for every $0\neq u\in H_0^1(\D)$, there holds
\begin{equation}\label{eq:inf-sup:upper}
\mu \leq \sup\limits_{0\neq v\in H_0^1(\D)}\frac{\abs{a(u,v)}}{\He{u}\He{v}}.
\end{equation}
Denote by $x=(x_1,\cdots, x_d)$, and define $w(x)=\abs{x}^2-R^2$ and 
\begin{equation*}
u(x)=\left\{
\begin{aligned}
& e^{\i kx_1}w(x),  & x\in\Om,\\
& 0,  & \mbox{otherwise}.
\end{aligned}
\right.
\end{equation*}
We prove only for 2D case since the proofs for 1D and 3D case follow almost the same procedure. For $d=2$, it's easy to verify that $\Lt{u}{\D}=\Lt{w}{\Om} = \sqrt{\pi/3}R^3$. By integrating by parts, we have
\begin{equation*}
\abs{a(u,v)} = \abs{\int_{\D}A\na u\cdot\na\bar{v}-Bk^2u\bar{v} dx} = \abs{\int_{\Om} (\Delta u+k^2u)\bar{v}dx},\quad \forall v\in H_0^1(\D).
\end{equation*}
A simple calculation leads to  $\Delta u+k^2u = 4e^{\i kx_1}(1+\i k x_1)$, then
\begin{align*}
\abs{a(u,v)} \leq 4\left(\int_{\Om}(1+k^2x_1^2)dx_1dx_2\right)^{\frac 12}\Lt{v}{\Om} \leq 2\sqrt{\pi}R(2+kR)\Lt{v}{\Om},
\end{align*}
which implies
\begin{align*}
\frac{\abs{a(u,v)}}{\He{u}\He{v}} \leq \frac{2\sqrt{\pi}R(2+kR)\Lt{v}{\Om}}{k^2\Lt{u}{\D}\Lt{v}{\D}} \leq \frac{2\sqrt{3}}{kR}+\frac{4\sqrt{3}}{k^2R^2},\quad \forall~0\neq v\in H_0^1(\D).
\end{align*}
From \eqref{eq:inf-sup:upper} we arrive at the following explicit upper bound on $\mu$:
\begin{equation}\label{eq:inf-sup:upper:b}
\mu \ls \frac{1}{k}+\frac{1}{k^2}.
\end{equation}

A combination of \eqref{eq:inf-sup:lower} and \eqref{eq:inf-sup:upper:b} yields \eqref{eq:inf-sup}. This completes the proof of the theorem.
\end{proof}

\subsection{Convergence of the truncated PML solution}\label{sec:err:trunc}
In this subsection we suppose that $\supp f\subset\Om$ and then give an estimate of $u-\hu$ in $\Om$
as an application of the inf--sup condition. 

 For any domain $G\subset\R^d$, denote by
\begin{align}\label{aG}
a_G(u,v) &:= (A\na u, \na v)_G-k^2(Bu,v)_G.
\end{align}
Introduce the energy norm on $H^1(G)$:
\begin{align*}
\He{v}_G := \Big(\Re(a_G(v,v))+2k^2\Lt{v}{G}^2\Big)^{\frac 12}.
\end{align*}
Clearly, $a=a_\D$ and $\He{\cdot}=\He{\cdot}_\D$ (see \eqref{a} and \eqref{eq:enorm}). Let $\Om^c:=\R^d\setminus\bar\Om.$
We introduce the following extension operators. $P,~P^*: H^\frac12(\Ga)\mapsto H^1(\Om^c)$ are defined by
\begin{align}\label{P}
a_{\Om^c}(P w,\vp) = a_{\Om^c}(\vp,P^* w)=0,\quad P w|_\Ga=P^* w|_\Ga=w,\quad \forall \vp\in H^1_0(\Om^c).
\end{align} 
 $\hat P, \hat P^*: H^\frac12(\Ga)\mapsto \{v\in H^1(\hOm):v|_{\hat{\Gamma}}=0\}$ are defined by
\begin{align}\label{hP}
a_{\hOm}(\hat P w,\psi)= a_{\hOm}(\psi,\hat P^* w)=0,\quad \hat P w|_\Ga=\hat P^* w|_\Ga=w,\quad \forall \psi\in H^1_0(\hOm).
\end{align} 
We remark that, as consequences of Lemma~\ref{LaG} below,   the above four extension operators are well-defined.
Note that $\tu(r,\theta) = u(\tr,\theta)$ where $u$ is the solution to \eqref{eq:Helm}--\eqref{eq:Somm}. It is easy to verify that $\tu=u$ in $\bar\Om$ and $\tu=P u$ in $\Om^c$ and that
\begin{align}\label{tu}
a_\Om(\tu,v)+a_{\Om^c}(\tu,v)=(f,v)_\Om,\quad\forall v\in H^1_0(\R^d).
\end{align} On the other hand, we have $\hu=\hat P\hu$ in $\hOm$.

The following continuity and coercivity estimates of the sesquilinear form $a_G$  will be used in the convergence analysis.
\begin{lemma}\label{LaG} Under the conditions of Theorem~\ref{thm:stab:L2} there hold
\begin{align}
\abs{a_{G}(u,v)}&\ls \He{u}_{G}\He{v}_{G} \quad\forall u,~v \in H^1(G),~G \subset\R^d \label{eq:continuity:d}\\
\abs{a_{G}(u,v)}&\ls k\Ho{u}{G}\He{v}_{G} \quad\forall u,~v \in H^1(G),~G \subset\R^d \label{eq:continuity:a}\\
\abs{a_{\Om^c\setminus\hOm}(v,v)}&\gtrsim \He{v}_{\Om^c\setminus\hOm}^2 \quad\forall v \in H^1(\Om^c\setminus\hOm),\label{eq:coercivity:a}\\
\abs{a_{\hOm}(v,v)}&\gtrsim k^{-1}\He{v}_{\hOm}^2 \quad\forall v \in H^1(\hOm), v|_\Ga=0.\label{eq:coercivity:b} 
\end{align}
\end{lemma}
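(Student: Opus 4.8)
The plan is to prove the four estimates of Lemma~\ref{LaG} separately, exploiting the explicit structure of the coefficient matrix $A=HDH^T$ and $B=\al\be^{d-1}$. The first two continuity estimates \eqref{eq:continuity:d}--\eqref{eq:continuity:a} are the easy bookkeeping part. First I would observe that both $A$ and $B$ are uniformly bounded in modulus: since $\al=1+\i\si$ and $\be=1+\i\de$ with $0\le\de\le\si\le\siz$ constant, the entries $\be/\al$, $\al/\be$, $\be^2/\al$, $\al$ of $D$ all have modulus bounded by a constant (independent of $k$), and $H$ is orthogonal, so $\abs{A}\ls 1$ and $\abs{B}\ls 1$ pointwise. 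Hence by Cauchy--Schwarz
\begin{align*}
\abs{a_G(u,v)}\ls \Lt{\na u}{G}\Lt{\na v}{G}+k^2\Lt{u}{G}\Lt{v}{G}.
\end{align*}
For \eqref{eq:continuity:d} I would bound the right-hand side by $\He{u}_G\He{v}_G$, after first checking that $\He{v}_G^2=\Re(a_G(v,v))+2k^2\Lt{v}{G}^2$ controls $\Lt{\na v}{G}^2+k^2\Lt{v}{G}^2$ up to constants; this lower bound on $\He{\cdot}_G$ comes from the displayed formula for $\Re a(v,v)$ in Section~\ref{ss:PML} together with $0\le\de\le\si$, which shows the gradient coefficients $\frac{1+\si\de}{1+\si^2},\frac{1+\si\de}{1+\de^2}$ are bounded below by a positive constant and the combined $k^2$-weight $\big((\si\de-1)+2\big)=(1+\si\de)\ge1$ is bounded below. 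For \eqref{eq:continuity:a} I would simply keep one factor as $k\Ho{u}{G}$ instead, using $k^2\Lt{u}{G}\Lt{v}{G}\le (k\Ho{u}{G})(k\Lt{v}{G})\le k\Ho{u}{G}\He{v}_G$.

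The coercivity estimate \eqref{eq:coercivity:a} on $\Om^c\setminus\hOm$, i.e.\ the region $r>\hR$, is where the PML absorption does the work. On this set $\si=\siz$ and $\de$ is essentially $\siz$ (in fact $\de(r)=\siz(1-R/r)$), both strictly positive constants bounded away from $0$. I would argue that on a region where the medium coefficients are bounded away from their ``transparent'' values, the full sesquilinear form is genuinely coercive: the cross terms that made $a_G$ indefinite in $\Om$ now acquire a definite imaginary part. Concretely, I expect to combine $\Re(a_G(v,v))$ and $-\Im(a_G(v,v))$ (using the 2D/3D analogues of the formulas \eqref{eq:stab:7}--\eqref{eq:stab:8}) to bound $\abs{a_G(v,v)}\ge c\big(\Re a_G(v,v)+2k^2\Lt{v}{G}^2\big)$ with a $k$-independent constant, because on $r>\hR$ the coefficient $(\si+\de)$ multiplying $k^2\abs{v}^2$ in $-\Im a_G$ is bounded below by a constant, so no $k^{-1}$ loss occurs.

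The main obstacle is the last estimate \eqref{eq:coercivity:b}, which has an explicit factor $k^{-1}$ and therefore cannot follow from a purely pointwise coefficient argument. On $\hOm$ (where $R\le r\le\hR$) the coefficient $\si$ ramps up from $0$ at $\Ga$, so near $\Ga$ the form behaves like the indefinite Helmholtz form and genuine coercivity fails there; the $k^{-1}$ loss is unavoidable and must be extracted carefully. My plan is to mimic the proof of Lemma~\ref{lem:stab:an}: for the harmonic mode $v_n$ (or $v_\l^m$) I would split according to whether the angular index dominates $k^2r^2$ and use either the real part \eqref{eq:stab:7} (giving a positive lower bound with a $\lam k$-weight, where $\lam\eqsim k^{-1}\cdot(\text{const})$ hence the $k^{-1}$) or the negative imaginary part \eqref{eq:stab:8} together with the Poincar\'e-type inequality \eqref{eq:stab:10} that uses the boundary condition $v|_\Ga=0$ on the inner boundary of $\hOm$. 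The boundary condition $v|_\Ga=0$ is essential: it replaces the role that $v(\hR)=0$ played in Lemma~\ref{lem:stab:an} and lets me control $\Lt{v}{\hOm}$ by $\Lt{v_r}{\hOm}$ near $\Ga$. Summing the per-mode estimates via Parseval (using the expansions of Section~2.2) then yields $\abs{a_{\hOm}(v,v)}\gtrsim \lam k\,\He{v}_{\hOm}^2\gtrsim k^{-1}\He{v}_{\hOm}^2$, which is \eqref{eq:coercivity:b}. The delicate point I expect to spend the most care on is tracking that the weights appearing in the mode-by-mode lower bound are uniform in $n$ (resp.\ $\l$) so that the sum reconstructs the full energy norm $\He{v}_{\hOm}^2$ without losing an extra power of $k$.
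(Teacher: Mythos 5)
Your handling of the two continuity bounds and of \eqref{eq:coercivity:a} is sound. For the continuity estimates your argument (pointwise boundedness of $A,B$, Cauchy--Schwarz, and the equivalence $\He{v}_G^2\eqsim\sHo{v}{G}^2+k^2\Lt{v}{G}^2$, which follows from $0\le\de\le\si$) is exactly the bookkeeping the paper omits. For \eqref{eq:coercivity:a} the paper is even simpler than your plan: on $r\ge\hR$ one has $\si\de=\siz^2(r-R)/r\ge\siz\sqrt3\ge 3$, so the $k^2$-coefficient $(\si\de-1)$ in $\Re\big(a_{\Om^c\setminus\hOm}(v,v)\big)$ is already $\ge 2$ and the real part alone is coercive; no combination with the imaginary part is needed (though your combination also works).

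The proof of \eqref{eq:coercivity:b}, however, has a genuine gap at the summation step. The Fourier expansion diagonalizes the form, $a_{\hOm}(v,v)=2\pi\sum_{n}a_n(v_n,v_n)$, where $a_n$ is the radial form \eqref{eq:stab:4} integrated over $(R,\hR)$; this is a sum of \emph{complex} numbers. Your plan bounds $\Re\, a_n(v_n,v_n)$ from below for large $\abs{n}$ (case (i) of Lemma~\ref{lem:stab:an}) and $-\Im\, a_n(v_n,v_n)$ from below for small $\abs{n}$ (case (ii)), and then sums ``via Parseval''. But lower bounds on two \emph{different} real-linear functionals, applied to different modes, do not yield a lower bound on the modulus of the sum: case-(i) modes come with no control on their imaginary parts, case-(ii) modes with no control on their real parts, so cancellation across modes is not excluded --- the data $z_1=\epsilon+\i M$, $z_2=-\epsilon-\i M$ satisfy ``$\Re z_1\ge\epsilon$'' and ``$-\Im z_2\ge\epsilon$'' yet $z_1+z_2=0$. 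To repair this one must bound a single fixed combination $c_1\Re\, a_n-c_2\Im\, a_n$ below, uniformly in $n$. Importing Lemma~\ref{lem:stab:an} wholesale cannot produce such a combination: for small $n$, $\Re\, a_n$ can be of size $-k^2\int r\abs{v_n}^2$ on an $O(1)$ region (where $\si\de<1$) while case (ii) supplies an $L^2$ weight of order $k$ only, forcing $c_1/c_2\ls k^{-1}$; for large $n$, $-\Im\, a_n$ carries the negative angular contribution $-\frac{\si-\de}{1+\de^2}\frac{n^2}{r}\abs{v_n}^2$ (of size $\siz$ near $\Ga$), which can only be neutralized by the angular term of $\Re\, a_n$ --- a term that the case-(i) bound \eqref{eq:stab:5} discards even though it is part of $\He{v}_{\hOm}^2$ --- forcing $c_1/c_2\gtrsim\siz$. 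These requirements are incompatible for large $k$. (You also misattribute the $k^{-1}$: the $\lam$ of Lemma~\ref{lem:stab:an} is an $O(1)$ constant, and the loss there is $\lam k$ versus $k^2$; the parameter of size $k^{-1}$ appears only in the proof of \eqref{eq:coercivity:b} itself.)

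This is precisely why the paper does not argue mode by mode. It takes the fixed combination $(1+\lam)\Re\big(a_{\hOm}(v,v)\big)-\siz^{-1}\Im\big(a_{\hOm}(v,v)\big)$ with $\lam\eqsim k^{-1}$ and $\check R-R\eqsim k^{-1}$ chosen as in \eqref{eq:checkR}. With the weight $\siz^{-1}$, the negative angular contribution of $-\Im$ is dominated \emph{pointwise} by the angular term of $\Re$ (coefficient $(\si+\frac1\si)\de+\lam\frac{1+\si\de}{1+\de^2}\ge 0$), and the $k^2$-coefficient of the combination, $(\si+\frac1\si)\de+\lam(\si\de-1)$, is negative only where $\de\ls\lam$, i.e.\ in a collar $R\le r\le\check R$ of width $\eqsim k^{-1}$; that collar term is then absorbed into the gradient term by a Poincar\'e-type inequality using $v|_\Ga=0$. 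This yields $(1+\lam)\Re\big(a_{\hOm}(v,v)\big)-\siz^{-1}\Im\big(a_{\hOm}(v,v)\big)\ge\lam\He{v}_{\hOm}^2$ and hence \eqref{eq:coercivity:b}. Your instinct to exploit the boundary condition through a Poincar\'e inequality near $\Ga$ is exactly right, but the case-splitting in $n$ must be abandoned in favor of one $n$-independent (indeed pointwise, so that no expansion is needed at all) inequality.
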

\begin{proof}The first two continuity estimates follow from the Cauchy-Schwarz inequality and the definitions of the norms and the sesquilinear forms. We omit the details. Next we prove the two coercivity estimates only for two dimensions. The 3D case is similar and the 1D case is simpler. From \eqref{def:medium} and \eqref{aG} (cf. \eqref{eq:stab:7}), we have
\begin{align*}
\Re\big(a_{\Om^c\setminus\hOm}(v,v)\big) &= \int_0^{2\pi}\int_{\hR}^\infty\bigg(\frac{1+\si\de}{1+\si ^2}r\abs{v_r}^2 + \frac{1+\si\de}{1+\de ^2}\frac{1}{r}\abs{v_\theta}^2 + (\si\de-1)k^2 r\abs{v}^2\bigg).
\end{align*}
Since $\si\de=\frac{\siz^2(r-R)}{r}\ge \siz\frac{\siz L}{L+R}\ge\siz\sqrt{3}\ge 3$ for $r\ge\hR$, \eqref{eq:coercivity:a} holds. It remains to prove \eqref{eq:coercivity:b}. We have
\begin{align*}
\Re\big(a_{\hOm}(v,v)\big) &= \int_0^{2\pi}\int_R^{\hR}\bigg(\frac{1+\si\de}{1+\si ^2}r\abs{v_r}^2 + \frac{1+\si\de}{1+\de ^2}\frac{1}{r}\abs{v_\theta}^2 + (\si\de-1)k^2 r\abs{v}^2\bigg),\\
\Im\big(a_{\hOm}(v,v)\big) &= \int_0^{2\pi}\int_R^{\hR}\bigg(\frac{\de-\si}{1+\si ^2}r\abs{v_r}^2 + \frac{\si-\de}{1+\de ^2}\frac{1}{r}\abs{v_\theta}^2 - (\si+\de)k^2 r\abs{v}^2\bigg).
\end{align*}
Let $\lam>0$ and $\check R\in (R,\hR)$ be constants satisfying
\begin{align}\label{eq:checkR}
\Big(\frac{1}{\siz}+\siz\Big)\siz\Big(1-\frac{R}{\check R}\Big)\ge 2\lam,\quad 3\ln\frac{\check R}{R}\le\frac12, \quad \frac{1}{1+\siz^2}\ge (\check R^2-R^2)k^2\lam.
\end{align}
As a matter of fact, it's easy to verify that $\lam=\frac{1}{6k \check R}$ and $\check R=R+\frac{1}{3k(1+\siz^2)}$ satisfy the above conditions. 
Then
\begin{align*}
&(1+\lam)\Re\big(a_{\hOm}(v,v)\big)-\si_0^{-1}\Im\big(a_{\hOm}(v,v)\big)\\
=~&\lam\int_0^{2\pi}\int_R^{\hR}\bigg(\frac{1+\si\de}{1+\si ^2}r\abs{v_r}^2 + \frac{1+\si\de}{1+\de ^2}\frac{1}{r}\abs{v_\theta}^2 + (\si\de+1)k^2 r\abs{v}^2\bigg)\\
&+\int_0^{2\pi}\int_R^{\hR}\bigg(\frac{2+\si\de-\frac{\de}{\si}}{1+\si ^2}r\abs{v_r}^2 + \frac{(\frac{1}{\si}+\si)\de}{1+\de ^2}\frac{1}{r}\abs{v_\theta}^2 + \Big(\Big(\frac{1}{\si}+\si\Big)\de-2\lam\Big)k^2 r\abs{v}^2\bigg)\\
\ge~& \lam\He{v}_{\hOm}^2+\int_0^{2\pi}\int_R^{\check R}\bigg(\frac{2}{1+\si ^2}r\abs{v_r}^2-2\lam k^2r\abs{v}^2\bigg)
\end{align*}
where we have used the first inequality of \eqref{eq:checkR}. Noting that, for $R\le r\le \check R$,
\begin{align*}
r\abs{v}^2=\frac{1}{r}\int_R^r\frac{\pa}{\pa s}(s^2|v(s)|^2)ds\le \frac{3}{r}\int_R^{\check R}r|v|^2dr+r\int_R^{\check R} r|v_r|^2dr,
\end{align*}
we have 
\begin{align*}
\int_R^{\check R}r|v|^2dr\le 3\ln\frac{\check R}{R}\int_R^{\check R}r|v|^2dr+\frac12(\check R^2-R^2)\int_R^{\check R} r|v_r|^2 dr
\end{align*}
which together with \eqref{eq:checkR} implies 
\begin{align*}
\int_R^{\check R}r|v|^2dr\le(\check R^2-R^2)\int_R^{\check R} r|v_r|^2dr
\end{align*}
Therefore, 
\begin{align*}
&(1+\lam)\Re\big(a_{\hOm}(v,v)\big)-\si_0^{-1}\Im\big(a_{\hOm}(v,v)\big)\ge \lam\He{v}_{\hOm}^2,
\end{align*}
which implies \eqref{eq:coercivity:b}. This completes the proof of the lemma.
\end{proof}
\begin{theorem}\label{thm:err:trunc} Suppose $\supp f \subset \Omega$.
Under the conditions of Theorem~\ref{thm:stab:L2}, the truncated PML problem \eqref{eq:PML:Var:1} has a unique solution $\hu\in H_0^1(\D)$. Furthermore, there holds:
\begin{align}
\He{u-\hu}_\Om &\ls ke^{-2k\siz L}\Gnorm{u},  &d=1,\label{eq:err:trunc:1d}\\
\He{u-\hu}_\Om &\ls k^5 e^{-2 k\siz L\big(1-\frac{R^2}{\hR^2+\siz^2L^2}\big)^{1/2}}\Gnorm{u}, &d=2,3.\label{eq:err:trunc}
\end{align}
\end{theorem}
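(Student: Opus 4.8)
\emph{Existence and uniqueness.} The form $a=a_\D$ is bounded on $H_0^1(\D)$ by the continuity estimate \eqref{eq:continuity:d}, and Theorem~\ref{thm:mu} provides the inf--sup lower bound $\mu\gtrsim k^{-1}>0$. Running the argument of Theorem~\ref{thm:mu} with trial and test functions exchanged (the adjoint problem \eqref{eq:dual:Var} has exactly the structure of \eqref{eq:PML:Var:1}) gives the transposed non-degeneracy $\sup_{0\neq u}\abs{a(u,v)}>0$ for every $0\neq v$. By the Banach--Ne\v{c}as--Babu\v{s}ka theorem, \eqref{eq:PML:Var:1} is then well posed with a unique solution $\hu\in H_0^1(\D)$, so the harmonic expansions \eqref{eq:hun}, \eqref{eq:hulm} (and \eqref{eq:hun:1d}) converge to $\hu$ in $H^1$ and may be manipulated termwise.

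\emph{An explicit formula for the error in $\Om$.} I would first collapse the estimate to one scalar multiplier per harmonic. Since $\supp f\subset\Om$, for $0\le r\le R$ one has $\tr=r$ and $\tt=t$ on $\supp f_n$, so the first two terms of \eqref{eq:hun} coincide exactly with those of \eqref{eq:un}; hence $u_n(r)-\hu_n(r)=-\hC_n\J(kr)$ on $[0,R]$. Evaluating \eqref{eq:un} at $r=R$ (the outgoing integral vanishes because $f_n$ is supported in $[0,R]$) gives $\intR\J(kt)f_n(t)t\,dt=\tfrac{2}{\pi\i}\,u_n(R)/\H(kR)$, which, inserted into \eqref{eq:hC}, turns the error coefficient into a pure multiple of the Dirichlet trace:
\[
u_n(r)-\hu_n(r)=\rho_n\,u_n(R)\,\J(kr),\qquad \rho_n:=\frac{\H(k\thR)}{\J(k\thR)\,\H(kR)},\qquad 0\le r\le R .
\]
The identical manipulation turns \eqref{eq:hun:1d}--\eqref{eq:hun:1d:coe} (1D) and \eqref{eq:hulm}--\eqref{eq:hC2} (3D, half--integer orders) into analogous formulas, expressing $u-\hu$ in $\Om$ through $u(\pm R)$, resp.\ $u_\l^m(R)$.

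\emph{The decay factor.} The heart of the proof is the uniform bound $\abs{\rho_n}\ls k^{1/2}e^{-2\gamma}$ with $\gamma:=k\siz L\big(1-\tfrac{R^2}{\hR^2+\siz^2L^2}\big)^{1/2}$; this is where the rate doubles. Note $\gamma$ is precisely the exponent produced by the Hankel bound \eqref{eq:Hankel:bound} with $z=k\thR=k\hR+\i k\siz L$ and $x=kR$. I would split $\rho_n=\frac{\H(k\thR)}{\J(k\thR)}\cdot\frac{1}{\H(kR)}$ and estimate the first factor by mirroring the case analysis in the proof of Theorem~\ref{thm:stab:L2}: for low orders \eqref{eq:stab:31}--\eqref{eq:stab:32} give $\abs{\H(k\thR)/\J(k\thR)}\eqsim e^{-2k\siz L}\le e^{-2\gamma}$ (since the geometric factor is $\le1$), while for higher orders the uniform asymptotics \eqref{eq:Modi-Bessel:1}--\eqref{eq:Modi-Bessel:2} and the relations \eqref{eq:Bessel:relation} give $\abs{\H(k\thR)/\J(k\thR)}\ls e^{-2n\Re(\xi(w))}$ with $w=\frac{k\siz L-\i k\hR}{n}$; the estimate $\Re(\xi(w))\ge\xi(\Re w)>\Re w-\tfrac{1}{2\Re w}$ of Lemma~\ref{lem:prop:Modi-Bessel}\,(i), combined with $1-\sqrt{1-\epsilon}\ge\epsilon/2$, then yields $n\Re(\xi(w))\ge\gamma$ throughout the propagating range $n^2\ls k^2R^2$. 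The second factor is controlled by $1/\abs{\H(kR)}\ls k^{1/2}$ for $n\ls kR$, the remaining evanescent orders being absorbed by the smallness of the weight $Q_n$ below.

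\emph{Summation and conclusion.} Since $A=I$, $B=1$ in $\Om$, one has $\He{v}_\Om^2=\Lt{\na v}{\Om}^2+k^2\Lt{v}{\Om}^2$, so by orthogonality
\[
\He{u-\hu}_\Om^2\eqsim\sum_{n}\abs{\rho_n u_n(R)}^2 Q_n,\qquad Q_n:=\intR\!\Big(k^2\abs{\J'(kr)}^2+\tfrac{n^2}{r^2}\abs{\J(kr)}^2+k^2\abs{\J(kr)}^2\Big)r\,dr .
\]
I would then verify $\abs{\rho_n}^2 Q_n\ls k^{10}e^{-4\gamma}(1+n^2)^{1/2}$ for every $n$ (for very large $n$ the factorial growth of $\rho_n$ is cancelled by the factorial smallness of $Q_n\eqsim k^2\abs{\J(kR)}^2$) and sum against $\sum_n(1+n^2)^{1/2}\abs{u_n(R)}^2\eqsim\Gnorm{u}^2$ to reach \eqref{eq:err:trunc}. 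The 1D estimate \eqref{eq:err:trunc:1d} is more direct: $D_1,D_2$ in \eqref{eq:hun:1d:coe} are $O\big(e^{-2k\siz L}\abs{u(\pm R)}\big)$ because $e^{2\i k\thR}=e^{2\i k\hR}e^{-2k\siz L}$, and the gradient of $e^{\pm\i kx}$ contributes the single power of $k$. The main obstacle is exactly the uniform-in-$n$ decay bound of the previous paragraph: showing the exponent is $2\gamma$ rather than merely $\gamma$ (which is all the variational/extension argument through Lemma~\ref{LaG} would give, since it only sees the smallness of $\tu$ on $\hGamma$ and not the compensating largeness of $\J(k\thR)$), while keeping the algebraic prefactor at $k^5$, demands the same careful Bessel/modified-Bessel case analysis across the propagating, intermediate, and evanescent ranges as in Theorem~\ref{thm:stab:L2}.
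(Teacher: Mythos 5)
Your proposal takes a genuinely different route from the paper: you argue mode-by-mode through the harmonic expansions, whereas the paper proves \eqref{eq:err:trunc} variationally, writing $\xi=u-\hu$ in $\Om$, $\xi=\hat Pu-\hu$ in $\hOm$, deriving $a(\xi,v)=a_{\hOm}(Pu-\hat Pu,\,P^*v-\hat P^*v)+a_{\Om^c\setminus\hOm}(Pu,P^*v)$, bounding each factor by $k^2e^{-k\siz L(1-R^2/(\hR^2+\siz^2L^2))^{1/2}}$ times a trace norm via Lemma~\ref{LaG} and \cite[(2.31)]{chen2005}, and closing with the inf--sup condition of Theorem~\ref{thm:mu}. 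Your reduction $u_n(r)-\hu_n(r)=\rho_n u_n(R)\J(kr)$ with $\rho_n=\H(k\thR)/\big(\J(k\thR)\H(kR)\big)$ is correct and elegant. However, your parenthetical claim that the variational/extension argument can only produce the single rate $\gamma$ is wrong: the doubling in the paper comes precisely from the product structure above, in which the exponential smallness enters once through $\hGnorm{Pu}$ and a second time through $\hGnorm{P^*v}$ (the adjoint extension of the \emph{test} function), with Theorem~\ref{thm:mu} then converting the sesquilinear-form bound into an energy bound at the cost of one power of $k$. That is exactly how the paper obtains $e^{-2\gamma}$ with prefactor $k^5$.

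The genuine gap is in your key uniform bound $\abs{\rho_n}\ls k^{1/2}e^{-2\gamma}$, $\gamma:=k\siz L\big(1-\tfrac{R^2}{\hR^2+\siz^2L^2}\big)^{1/2}$. You need $n\Re(\xi(w))\ge\gamma$ for all propagating modes, where $w=\tfrac{k\siz L}{n}-\i\tfrac{k\hR}{n}$, and you propose to get it from $\Re(\xi(w))\ge\xi(\Re w)>\Re w-\tfrac{1}{2\Re w}$ together with $1-\sqrt{1-\epsilon}\ge\epsilon/2$. That chain only yields $n\Re(\xi(w))\ge k\siz L-\tfrac{n^2}{2k\siz L}$, hence $n\Re(\xi(w))\ge\gamma$ only for $n^2\le k^2R^2\cdot\tfrac{\siz^2L^2}{\hR^2+\siz^2L^2}$, which falls strictly short of the full propagating range $n\le kR$; near glancing incidence ($n\approx kR$, exactly the modes that decay most slowly through the PML and force the geometric factor inside $\gamma$) the chain provably fails. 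Concretely, with the paper's own test parameters $R=L=1$, $\hR=2$, $\siz=5$, your lower bound at $n=kR$ is $4.9\,k$ while $\gamma\approx 4.913\,k$. The inequality you want is (barely) true, but establishing it requires retaining the imaginary part of $w$: one needs something like $\Re(\xi(w))\ge\Re(w)-\tfrac{\Re(w)}{2\abs{w}^2}+\cdots$, since it is the gap between $\tfrac{\Re w}{\abs{w}^2}$ and $\tfrac{1}{\Re w}$ that recovers the factor $\tfrac{\siz^2L^2}{\hR^2+\siz^2L^2}$; no such refinement of Lemma~\ref{lem:prop:Modi-Bessel}(i) is available in the paper, and the same obstacle appears if you instead try \eqref{eq:Hankel:bound} on the numerator, because you then need a matching \emph{lower} bound on $\abs{\J(k\thR)}$ of size $e^{+\gamma}$, which no stated lemma provides. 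Finally, the transition range $kR\ls n\ls k\abs{\thR}$ is dispatched in one sentence; your factorial-cancellation argument is valid only for $n\gg k\abs{\thR}$ (at $n\approx kR$ it would give decay $e^{-4kR\ln(\abs{\thR}/R)}$, far weaker than the required $e^{-4\gamma}$ for the paper's parameters), so this regime demands its own uniform (Airy-type) analysis. Until the near-glancing and transition regimes are genuinely controlled, the proposal does not establish \eqref{eq:err:trunc}.
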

\begin{proof} The 1D case \eqref{eq:err:trunc:1d} can be proved easily by noting that $\hu-u = D_1 e^{-\i kx} + D_2 e^{\i kx}$ in $\Om$. The details are omitted.
Next we turn to prove \eqref{eq:err:trunc}. Write $\xi:=u-\hu$ in $\bar\Om$ and $\xi:=\hat P u-\hu$ in $\hOm$. We have, for any $v\in H_0^1(D)$,
\begin{align*}
a(\xi, v)=&~a_\Om(u,v)+a_{\hOm} (\hat P u,v)-(f,v)_\Om & (\text{from } \eqref{eq:PML:Var:1})\\
=&~a_{\hOm}(\hat P u,v)-a_{\hOm}(P u,v)& (\text{from } \eqref{tu})\\
=&~a_{\hOm}(\hat P u-P u,\hat P^* v)& (\text{since } v-\hat P^* v\in H_0^1(\hOm))\\
=&~a_{\hOm}(\hat P u-P u,\hat P^* v)&\\
&~-a_{\hOm}(\hat P u-P u, P^* v)+a_{\Om^c\setminus\hOm}(P u, P^* v)& (\text{since } (\hat P u-P u)|_\Ga=0)\\
=&~a_{\hOm}(P u-\hat P u, P^* v-\hat P^* v)+a_{\Om^c\setminus\hOm}(P u, P^* v).&
\end{align*}
which together with \eqref{eq:continuity:d} implies that
\begin{equation}\label{e1}
\abs{a(\xi, v)}\ls \he{P u-\hat P u}_{\hOm}\he{P^* v-\hat P^* v}_{\hOm}+\He{P u}_{\Om^c\setminus\hOm}\He{P^* v}_{\Om^c\setminus\hOm}.
\end{equation}
Next we estimate the terms on the right hand side. Let $w:=P u-\hat P u$ in $\hOm$ and let $\vp\in H^1(\hOm)$ satisfy:
\begin{equation*}
\vp|_{\hGamma}=w|_{\hGamma}=Pu|_{\hGamma}, \quad \vp|_\Ga=0,\quad\text{ and } \Ho{\vp}{\hOm}\ls\hGnorm{Pu}. 
\end{equation*}
Clearly, $w-\vp\in H_0^1(\hOm)$. It follows from \eqref{eq:continuity:a}, \eqref{eq:coercivity:b}, \eqref{P}, and \eqref{hP} that
\begin{align*}
\He{w-\vp}_{\hOm}^2\ls k a_{\hOm}(w-\vp,w-\vp)=k a_{\hOm}(-\vp,w-\vp)\ls k^2\Ho{\vp}{\hOm}\He{w-\vp}_{\hOm}
\end{align*}
which implies that
\begin{align}\label{e2}
\he{P u-\hat P u}_{\hOm}\ls  k^2\Ho{\vp}{\hOm}\ls k^2\hGnorm{Pu}.
\end{align}
Similarly,
\begin{align}\label{e3}
\he{P^* v-\hat P^* v}_{\hOm}\ls k^2\hGnorm{P^* v},
\end{align}
and
\begin{align}\label{e4}
\He{P u}_{\Om^c\setminus\hOm}\ls k\hGnorm{Pu},\quad\He{P^* v}_{\Om^c\setminus\hOm}\ls k\hGnorm{P^* v}.
\end{align}
Moreover, from \cite[(2.31)]{chen2005} and noting \cite[10.47.5]{olver2010nist} for 3D case, we have
\begin{align}\label{e5}
\hGnorm{P v}, ~\hGnorm{P^* v}\leq e^{-k\siz L\big(1-\frac{R^2}{\hR^2+\siz^2L^2}\big)^\frac12}\Gnorm{v},\;\forall v\in H^\frac12(\Ga).
\end{align}
By plugging \eqref{e2}--\eqref{e5} into \eqref{e1} we conclude that
\begin{align*}
\abs{a(\xi, v)}\ls k^4e^{-2k\siz L\big(1-\frac{R^2}{\hR^2+\siz^2L^2}\big)^\frac12}\Gnorm{u}\Gnorm{v}.
\end{align*}
Therefore, it follows from the trace theorem and Theorem~\ref{thm:mu} that
\begin{align*}
\He{\xi}\ls k^5e^{-2k\siz L\big(1-\frac{R^2}{\hR^2+\siz^2L^2}\big)^\frac12}\Gnorm{u},
\end{align*}
which implies \eqref{eq:err:trunc}. This completes the proof of the theorem.
\end{proof}

\begin{remark}
A similar procedure as for  \cite[Theorem 2.6]{chen2005} gives
\begin{equation*}
\Ho{u-\hu}{\Om} \leq \widetilde{C}k^2e^{-k\siz L\left(1-\frac{R^2}{\hR^2+\siz^2 L^2}\right)^{1/2}}\Gnorm{\hu},\quad d=2
\end{equation*}
where $\widetilde{C}$ polynomially depends on $k$. Note that our convergence rate in \eqref{eq:err:trunc} is twice that of the above estimate.
\end{remark}
\subsection{$H^2$-estimates}
In this subsection, we will show the $H^2$-regularity for the truncated PML problem \eqref{eq:PML:all}--\eqref{eq:PML:bound:all} as an immediate consequence of Theorem~\ref{thm:stab:L2}. Since $A$ and $B$ are discontinuous on $\Gamma$, we define the space $H^2(\OchO):=\{v : v|_{\Om}\in H^2(\Om), v|_{\hOm}\in H^2(\hOm)\}$ with the semi-norm $\sHt{v}{\OchO}:=(\sHt{v}{\Om}^2+\sHt{v}{\hOm}^2)^{1/2}$.
The following regularity estimates hold:
\begin{corollary}\label{cor:H2-stab}
Under the conditions of Theorem~\ref{thm:stab:L2}, we have
\begin{equation}\label{eq:H2-stab}
\sHt{\hu}{\OchO} \ls k\Lt{f}{\D}.
\end{equation}
\end{corollary}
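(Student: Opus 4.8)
The plan is to recast the weak problem \eqref{eq:PML:Var:1} as a second-order elliptic transmission problem with a frozen (i.e. $k$-independent) principal part, and then to let the growth in $k$ be produced entirely by the zeroth-order term together with the $L^2$-bound already in hand. First I would write the strong form $-\na\cdot(A\na\hu)=F$ in $\D$ with $\hu=0$ on $\hGamma$, where $F:=f+k^2B\hu$. Since $B=\al\be^{d-1}$ is bounded on $\D$, Theorem~\ref{thm:stab:L2} immediately gives
\begin{equation*}
\Lt{F}{\D}\ls\Lt{f}{\D}+k^2\Lt{\hu}{\D}\ls\Lt{f}{\D}+k\Lt{f}{\D}\ls k\Lt{f}{\D}.
\end{equation*}
Thus the whole problem is reduced to a fixed elliptic operator acting on $\hu$ with an $L^2$ right-hand side of size $k\Lt{f}{\D}$.

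Next I would verify the hypotheses needed for piecewise $H^2$-regularity. The coefficient $A$ equals the identity on $\Om$ (there $\si=\de=0$, so $\al=\be=1$ and $A=HH^T=I$, in particular smooth through the origin) and is smooth on $\hOm$ (where $r\ge R>0$ keeps $\al,\be$ and the entries of $D$ smooth and bounded); it is discontinuous only across the smooth interface $\Ga=\{r=R\}$. Writing $A=HDH^T$ with $H$ real orthogonal, one has $\Re(A\xi\cdot\bar\xi)=\Re(D\eta\cdot\bar\eta)$ with $\eta=H^T\xi$, and because $0\le\de\le\si$ each diagonal entry of $D$ has strictly positive real part (e.g.\ $\Re(\be/\al)=(1+\si\de)/(1+\si^2)>0$ in 2D), so $A$ is uniformly elliptic. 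Since $\hu\in H_0^1(\D)$ solves the weak equation, it automatically satisfies the natural transmission conditions $[\hu]=0$ and $[A\na\hu\cdot n]=0$ on $\Ga$, which are precisely the conditions under which $H^2$-regularity holds on each of $\Om$ and $\hOm$ separately. Standard elliptic transmission regularity then yields, with a constant independent of $k$,
\begin{equation*}
\sHt{\hu}{\OchO}\ls\Lt{F}{\D}+\Ho{\hu}{\D}.
\end{equation*}

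Finally I would control the lower-order term by the $H^1$-stability estimate: since the weights in the energy norm are bounded below and $k\ge 1$, one has $\Ho{\hu}{\D}\ls\He{\hu}$, and Corollary~\ref{cor:H1-stab} gives $\He{\hu}\ls\Lt{f}{\D}$. Combining the three displays produces $\sHt{\hu}{\OchO}\ls k\Lt{f}{\D}+\Lt{f}{\D}\ls k\Lt{f}{\D}$, as claimed. The main obstacle is the interface $\Ga$, across which $A$ jumps: one cannot treat $\Om$ and $\hOm$ as independent Dirichlet problems, since that would require an uncontrolled $H^{3/2}(\Ga)$ bound on the trace of $\hu$; it is essential to use the transmission formulation, whose natural jump conditions make the piecewise $H^2$-estimate available with only $L^2$ data and the $H^1$-norm of the solution. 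A secondary point to check carefully is the uniform ellipticity of the complex PML tensor $A$, which rests entirely on the sign condition $0\le\de\le\si$.
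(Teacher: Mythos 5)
Your proposal is correct and follows essentially the same route as the paper: the paper likewise rewrites \eqref{eq:PML:all} as $-\na\cdot(A\na\hu)=Bk^2\hu+f$, applies an elliptic interface (transmission) regularity estimate — citing \cite[Theorem 4.5]{huang2007uniform} for exactly the step you call ``standard elliptic transmission regularity'' — and bounds the right-hand side by $k\Lt{f}{\D}$ via Theorem~\ref{thm:stab:L2}. The only minor difference is that the cited interface estimate comes without the lower-order term $\Ho{\hu}{\D}$ (indeed with a favorable weight $\siz$ on the PML part), so your extra appeal to Corollary~\ref{cor:H1-stab} is not needed, though it is harmless and correctly handled.
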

\begin{proof}
From \eqref{eq:PML:all}, there holds
\[-\na\cdot(A\na \hu) = Bk^2\hu+f,
\]
Applying the estimates in \cite[Theorem 4.5]{huang2007uniform} and using \eqref{eq:stab:L2}, we obtain
\[ \sHt{\hu}{\Om} + \siz\sHt{\hu}{\hOm} \ls \Lt{Bk^2\hu+f}{\D}\ls k\Lt{f}{\D}.
\]
\end{proof}

In the next section, we apply the regularity estimate to derive preasymptotic error estimates for the CIP-FEM.

\section{CIP-FEM and its preasymptotic error analysis}\label{sec:CIP-FEM}
In this section, we first introduce the CIP-FEM for the truncated PML problem \eqref{eq:PML:Var:1}, then give a preasymptotic error analysis of it. We suppose that $\supp f\subset\Om$ in this section.

\subsection{CIP-FEM}
Let $\Mh$ be a curvilinear triangulation of $\D$ (cf. \cite{melenk2010convergence,melenk2011wavenumber,monk2003finite}). For any $K\in\Mh$, we define $h_K:=\diam(K)$ and $h:=\max_{K\in \Mh} h_K$. Similarly, for each edge/face $e$ of $K\in\Mh$, we define $h_e:=\diam(e)$. Assume that $h_K\eqsim h_e \eqsim h$, and $\mathring{K}\cap\Gamma =\emptyset$. Additionally, denote by $\widehat{K}$ the reference element and $F_K$ the element maps from $\widehat{K}$ to $K\in \Mh$, which satisfy the Assumption 5.2 in \cite{melenk2010convergence}, that is, $F_K$ can be written as $F_K=R_K\circ A_K$, where $A_K$ is an affine map and the maps $R_K$ and $A_K$ satisfy for constants $C_{\text{affine}},~C_{\text{metric}},~\bar{\gamma}>0$ independent of $h$:
\begin{align*}
&\Vert A'_K\Vert_{L^\infty(\widehat K)}\leq C_{\text{affine}}h,\qquad \Vert (A'_K)^{-1}\Vert_{L^\infty(\widehat K)}\leq C_{\text{affine}}h^{-1}, \\
&\Vert (R'_K)^{-1}\Vert_{L^\infty(\widetilde K)}\leq C_{\text{metric}},\qquad \Vert \na^n R_K\Vert_{L^\infty(\widetilde K)}\leq C_{\text{affine}}\bar{\gamma}^n n!\qquad \forall n\in\N_0.
\end{align*}
Here, $\widetilde{K}=A_K(\widehat{K})$. Let $\EhI$ be the set of edges/faces of $\Mh$ in $\Om$. For every $e=\partial K_1 \cap \partial K_2\in\EhI$, we define the jump $[v]$ of $v$ on $e$ as follows:
\begin{equation*}
[v]|_e:=v|_{K_1}-v|_{K_2}.
\end{equation*}
Now we introduce the energy space $V$ and the sesquilinear form $\ah (\cdot,\cdot)$ on $V\times V$. Firstly, denote by $H^2(\Mh):=\{v: v|_K\in H^2(K), \forall K\in\Mh\}$ with the semi-discrete norm $\sHt{v}{\Mh}:=\big(\sum_{K\in\Mh}\sHt{v}{K}\big)^{1/2}$. Then, we define
\begin{align*}
V &:= H_0^1(\D)\cap H^2(\Mh) \\
\ah (u,v) &:= a(u,v)+J(u,v)  \\
J(u,v) &:= \sum\limits_{e\in\EhI} \gamma_e h_e\ine{[\na u\cdot n]}{[\na v\cdot n]}
\end{align*}
where penalty parameters $\gamma_e,e\in\EhI$ are numbers with nonpositive imaginary parts. 

It is clear that, if $\hu\in H^2(\OchO)$ is the solution to \eqref{eq:PML:all}--\eqref{eq:PML:bound:all}, then $J(\hu,v) = 0$ for any $v\in V$, and from \eqref{eq:PML:Var:1} we have
\begin{equation}\label{eq:PML:Var}
\ah (\hu,v) = \inOm{f}{v}, \quad \forall v\in V.
\end{equation}

Let $V_h$ be the linear finite element approximation space 
\begin{equation*}
V_h:=\{v_h\in H_0^1(\D):v_h|_K\circ F_K \in \mathcal{P}_1(\widehat{K}), \forall K\in\Mh\}.
\end{equation*}
where $\mathcal{P}_1(\widehat{K})$ denotes the set of all first order polynomials on $\widehat{K}$. Then the CIP-FEM reads as: Find $u_h \in V_h$ such that
\begin{equation}\label{eq:CIP-FEM}
\ah(u_h,v_h) = \inOm{f}{v_h} \quad\forall v_h\in V_h.
\end{equation}
\begin{remark}
(i) Clearly, if $\gamma_e \equiv 0$, then the CIP-FEM becomes FEM. 

(ii) The CIP-FEM was first introduced by Douglas and Dupont \cite{Douglas1976Interior} for second order elliptic and parabolic PDEs, and it was applied to the the Helmholtz problem \eqref{eq:Helm} with the impedance boundary condition by Wu, Zhu and Du \cite{Wu2013Pre,zhu2013preasymptotic,du2015preasymptotic}. 

(iii) Note that the sesquilinear form $a(\cdot,\cdot)$ is coercive in the PML region $\hOm$ (cf. Lemma~\ref{LaG}) and hence the truncated PML problem behaves more like an elliptic one. Based on this consideration, we only introduce penalty terms in $J(u,v)$ for edges/faces in $\Om$ in order to reduce the pollution error.

(iv) In this paper we consider the scattering problem with time dependence $e^{-\i\omega t}$, that is, the sign before $\i$ in \eqref{eq:Somm} is negative. If we consider the scattering problem with time dependence $e^{\i\omega t}$, that is, the sign before $\i$ in  \eqref{eq:Somm} is positive, then the penalty parameters should be complex numbers with  nonnegative imaginary parts.
\end{remark}

\subsection{Elliptic projections} We define the elliptic projection operators as follows (cf. \cite{zhu2013preasymptotic}). Introduce the sesquilinear forms
\begin{equation}\label{eq:err:bh}
b(u,v)=\inD{A\na u}{\na v} ~\text{and}~ \bh (u,v) = b(u,v)+J(u,v)
\end{equation}
 For any $w\in H_0^1(\D)\cap H^2(\OchO)$, define its elliptic projections $\Ph w\in V_h$ as:
\begin{equation}\label{eq:ell-proj}
\begin{aligned}
\bh (\Pha w,v_h) &{}= \bh (w,v_h),\quad\forall v_h\in V_h, \\
\bh (v_h,\Phm w) &{}= \bh (v_h,w),\quad\forall v_h\in V_h.
\end{aligned}
\end{equation}
In this subsection we derive  error estimates of the elliptic projections.  For simplicity, we assume that $\gamma_e\equiv\gamma$.

First, we prove the following continuity and coercivity properties for $\bh$.
\begin{lemma}\label{lem:cont-coer}
There exists a constant $\gamma_0>0$ such that, if $\Re\gamma\geq -\gamma_0$ and $\abs{\gamma}\ls 1$, then for any $u,v \in V$ and $v_h\in V_h$, there holds  
\begin{align*}
\abs{\bh (u,v)} &\ls \left(\sHo{u}{\D}+h\sHt{u}{\Mh}\right)\left(\sHo{v}{\D}+h\sHt{v}{\Mh}\right),\\
\Re(\bh (v_h,v_h))&\gtrsim \sHo{v_h}{\D}^2.
\end{align*}
\end{lemma}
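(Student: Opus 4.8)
The plan is to treat the two estimates separately, since the continuity bound is purely algebraic while the coercivity bound requires exploiting the sign structure of the penalty parameter and a norm-equivalence (inverse) inequality on the finite element space. For the continuity estimate, I would decompose $\bh(u,v)=b(u,v)+J(u,v)$ and bound each piece by Cauchy--Schwarz. For the bulk term $b(u,v)=\inD{A\na u}{\na v}$, since the matrix $A=HDH^T$ has entries that are bounded in modulus uniformly in $k$ and $h$ (the coefficients involve only $\al,\be$, which satisfy $\abs{\al},\abs{\be}\ls 1$ under the standing assumptions), I get $\abs{b(u,v)}\ls\sHo{u}{\D}\sHo{v}{\D}$, which is dominated by the claimed product. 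For the jump term, I would apply Cauchy--Schwarz on each edge together with the trace inequality $h_e^{1/2}\Lt{[\na w\cdot n]}{e}\ls\sHo{w}{K_1\cup K_2}+h\sHt{w}{K_1\cup K_2}$ — the standard scaled trace estimate for the normal derivative jump — and then sum over $e\in\EhI$ using $\abs{\gamma}\ls 1$. Summing and regrouping via discrete Cauchy--Schwarz yields the factor $\big(\sHo{u}{\D}+h\sHt{u}{\Mh}\big)\big(\sHo{v}{\D}+h\sHt{v}{\Mh}\big)$.

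For the coercivity estimate I would restrict to $v_h\in V_h$ and compute $\Re\big(\bh(v_h,v_h)\big)=\Re\big(b(v_h,v_h)\big)+\Re\big(J(v_h,v_h)\big)$. The real part of $b(v_h,v_h)$ involves $\Re(A\na v_h\cdot\overline{\na v_h})$; by the explicit $\Re a(v,v)$ formula quoted in the excerpt (with the $k^2$-mass term dropped, since $b$ omits it), together with $0\le\de\le\si$, the quadratic form $\Re(A\xi\cdot\bar\xi)$ is bounded below by a positive multiple of $\abs{\xi}^2$, giving $\Re\big(b(v_h,v_h)\big)\gtrsim\sHo{v_h}{\D}^2$. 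The penalty contribution is $\Re\big(J(v_h,v_h)\big)=\sum_e(\Re\gamma_e)h_e\Lt{[\na v_h\cdot n]}{e}^2$, which is nonnegative when $\Re\gamma\ge 0$ but may be negative when $-\gamma_0\le\Re\gamma<0$; this negative term must be absorbed into the positive lower bound from $b$.

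The main obstacle — and the reason $\gamma_0$ must be chosen small — is controlling this possibly-negative penalty term. The key tool is the inverse inequality on $V_h$: since $v_h$ is piecewise linear, $[\na v_h\cdot n]$ is (piecewise) constant on each $e$, and the scaled trace/inverse estimate gives $h_e\Lt{[\na v_h\cdot n]}{e}^2\ls\sHo{v_h}{K_1\cup K_2}^2$ with a constant depending only on the shape-regularity and the element maps $F_K$. Summing over all interior edges, and using that each element is counted a bounded number of times, yields $\sum_e h_e\Lt{[\na v_h\cdot n]}{e}^2\le C_J\sHo{v_h}{\D}^2$ for a mesh-independent $C_J$. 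Hence whenever $\Re\gamma\ge-\gamma_0$,
\begin{equation*}
\Re\big(\bh(v_h,v_h)\big)\ge c_0\sHo{v_h}{\D}^2-\gamma_0 C_J\sHo{v_h}{\D}^2,
\end{equation*}
and choosing $\gamma_0:=c_0/(2C_J)$ gives $\Re\big(\bh(v_h,v_h)\big)\ge\tfrac12 c_0\sHo{v_h}{\D}^2$, as required. I expect the delicate point to be verifying that $C_J$ is genuinely independent of $k$ and $h$ on the curvilinear mesh; this is where the Assumption 5.2 bounds on $A_K$ and $R_K$ enter, ensuring the trace and inverse constants transfer from the reference element uniformly.
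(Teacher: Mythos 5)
Your proposal is correct and follows essentially the same route as the paper: continuity via Cauchy--Schwarz on the bulk term plus the scaled trace inequality for the jump term, and coercivity by combining the lower bound $\Re(b(v_h,v_h))\geq C_b\sHo{v_h}{\D}^2$ (from the sign structure of $A$ with $0\le\de\le\si$) with the inverse-type bound $\sum_{e\in\EhI} h_e\Lt{[\na v_h\cdot n]}{e}^2\ls \sHo{v_h}{\D}^2$, then taking $\gamma_0$ proportional to the ratio of the two constants. Your explicit attention to the uniformity of the trace/inverse constants on the curvilinear mesh (via the assumptions on the element maps $F_K$) is a point the paper passes over with ``a similar analysis as above,'' but it changes nothing in substance.
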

\begin{proof}
First,  by using the local trace inequality $\Lt{v}{\pa K}\ls h_K^{-\frac12}\Lt{v}{K}+h_K^{\frac12}\Lt{\na v}{K}$ (see \cite{chen2010selected}), we get
\begin{align*}
\abs{J(u,v)}&{}\leq\abs{\gamma} \bigg(\sum\limits_{K\in\Mh\cap\Om} h_K\Lt{\na u\cdot n}{\pa K}^2\bigg)^\frac12\bigg(\sum\limits_{K\in\Mh\cap\Om} h_K\Lt{\na v\cdot n}{\pa K}^2\bigg)^\frac12\\
&{}\ls \left(\sHo{u}{\D}+h\sHt{u}{\Mh}\right)\left(\sHo{v}{\D}+h\sHt{v}{\Mh}\right).
\end{align*}
which together with \eqref{eq:err:bh} implies the continuity of $\bh $.

Secondly, from the definition of the coefficient $A$ in \S~\ref{ss:PML} and a similar analysis as above, we conclude that
\begin{align*}
\Re(b(v_h,v_h))\geq C_b\sHo{v_h}{\D}^2 \text{ and }  \sum\limits_{e\in\EhI} h_e\Lt{[\na v_h\cdot n]}{e}^2\le \be_0\sHo{v_h}{\D},
 \;\forall v_h\in V_h,
\end{align*}
 where $C_b$ and $\be_0$ are  positive constants. Then \begin{equation*}
\Re(\bh (v_h,v_h))=\Re(b(v_h,v_h))+\Re\gamma \sum\limits_{e\in\EhI} h_e\Lt{[\na v_h\cdot n]}{e}^2 \geq \frac{C_b}{2}\sHo{v_h}{\D}^2
\end{equation*}
if $\Re\gamma\ge -\gamma_0$ with $\gamma_0:=\frac{C_b}{2\be_0}.$
This completes the proof of the lemma.
\end{proof}

The following lemma gives error estimates of the elliptic projections.
\begin{lemma}\label{lem:err:ell-proj}
Under the conditions of Lemma~\ref{lem:cont-coer}, for any $w \in H_0^1(\D)\cap H^2(\OchO)$, there holds 
\begin{equation*}
\Lt{w-\Ph w}{\D} + h\sHo{w-\Ph w}{\D} + h^2\sHt{w-\Ph w}{\Mh} \ls h^2\sHt{w}{\OchO}.
\end{equation*}
\end{lemma}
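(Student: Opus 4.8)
The plan is to treat the elliptic projections defined in \eqref{eq:ell-proj} as a standard coercive finite element projection and to run the classical three-step argument: a C\'ea-type bound for the $H^1$-seminorm, an inverse estimate for the broken $H^2$-seminorm, and an Aubin--Nitsche duality argument for the $L^2$-norm. Throughout I would invoke Lemma~\ref{lem:cont-coer}, which supplies the continuity of $\bh$ in the norm $\sHo{\cdot}{\D}+h\sHt{\cdot}{\Mh}$ together with the coercivity $\Re\bh(v_h,v_h)\gtrsim\sHo{v_h}{\D}^2$ on $V_h$; I would also use the standard interpolation estimates $\sHo{w-\Ih w}{\D}\ls h\sHt{w}{\OchO}$ and $\sHt{w-\Ih w}{\Mh}\ls\sHt{w}{\OchO}$ for the Lagrange interpolant $\Ih w$, and the inverse inequality $\sHt{v_h}{\Mh}\ls h^{-1}\sHo{v_h}{\D}$ on $V_h$ (valid for the curvilinear elements under the assumptions on $F_K$). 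I treat $\Pha$ explicitly; the bound for $\Phm$ follows by interchanging the two arguments of $\bh$.

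For the $H^1$-seminorm estimate I would set $e:=w-\Pha w$ and use the Galerkin orthogonality $\bh(e,v_h)=0$ coming from \eqref{eq:ell-proj}. For any $v_h\in V_h$, coercivity together with this orthogonality gives $\sHo{\Pha w-v_h}{\D}^2\ls\abs{\bh(w-v_h,\Pha w-v_h)}$, and the continuity of $\bh$ combined with the inverse inequality (to absorb $h\sHt{\Pha w-v_h}{\Mh}$ into $\sHo{\Pha w-v_h}{\D}$) yields $\sHo{\Pha w-v_h}{\D}\ls\sHo{w-v_h}{\D}+h\sHt{w-v_h}{\Mh}$. Choosing $v_h=\Ih w$ and invoking the interpolation estimates then produces $\sHo{e}{\D}\ls h\sHt{w}{\OchO}$. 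The broken $H^2$-seminorm is controlled by the triangle inequality $\sHt{e}{\Mh}\le\sHt{w-\Ih w}{\Mh}+\sHt{\Ih w-\Pha w}{\Mh}$, where the first term is $\ls\sHt{w}{\OchO}$ by interpolation and the second is $\ls h^{-1}\sHo{\Ih w-\Pha w}{\D}\ls\sHt{w}{\OchO}$ by the inverse inequality and the $H^1$ bound just obtained; hence $h^2\sHt{e}{\Mh}\ls h^2\sHt{w}{\OchO}$.

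For the $L^2$-estimate I would argue by duality against the purely elliptic form $b$ underlying $\bh$. Let $\phi\in H_0^1(\D)$ solve the adjoint problem $b(v,\phi)=\inD{v}{e}$ for all $v\in H_0^1(\D)$, which is well posed because $\Re b$ is positive definite (the real part of $A$ is, coefficientwise, bounded below, exactly as in the proof of Lemma~\ref{lem:cont-coer}). The key structural observation is that, since the penalized edges in $\EhI$ lie in $\Om$ and $\phi|_\Om\in H^2(\Om)$, the jumps $[\na\phi\cdot n]$ vanish and therefore $J(\cdot,\phi)=0$, so that $\bh(e,\phi)=b(e,\phi)=\Lt{e}{\D}^2$. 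Using Galerkin orthogonality once more gives $\Lt{e}{\D}^2=\bh(e,\phi-\Ih\phi)$, and continuity of $\bh$ yields $\Lt{e}{\D}^2\ls\big(\sHo{e}{\D}+h\sHt{e}{\Mh}\big)\big(\sHo{\phi-\Ih\phi}{\D}+h\sHt{\phi-\Ih\phi}{\Mh}\big)$. The first factor is $\ls h\sHt{w}{\OchO}$ by the previous steps, while the second is $\ls h\sHt{\phi}{\OchO}$ by interpolation, so dividing out $\Lt{e}{\D}$ gives the claim once the regularity $\sHt{\phi}{\OchO}\ls\Lt{e}{\D}$ is available.

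The main obstacle is precisely this regularity of the dual problem: because $A$ jumps across $\Ga$, $\phi$ is only piecewise $H^2$, and one must establish the transmission-type estimate $\sHt{\phi}{\OchO}\ls\Lt{e}{\D}$ with a constant independent of $k$ and $\siz$. Since the dual operator $-\na\cdot(A\na\cdot)$ carries no $k^2$ term, this follows from the same elliptic regularity result quoted for Corollary~\ref{cor:H2-stab}, namely \cite[Theorem 4.5]{huang2007uniform}, applied with right-hand side $e\in L^2(\D)$. With this estimate in hand the three bounds combine to give the stated result; the analogous computation using the primal problem $b(\phi,v)=\inD{e}{v}$ handles $\Phm$.
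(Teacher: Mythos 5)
Your proposal is correct and follows essentially the same route as the paper's own proof: a coercivity/continuity (C\'ea-type) argument with the interpolant for the $H^1$-seminorm, an inverse estimate for the broken $H^2$-seminorm, and an Aubin--Nitsche duality argument against the elliptic form $b$ with the piecewise $H^2$ regularity of \cite[Theorem 4.5]{huang2007uniform}, the key point in both being that the penalty term vanishes on the dual solution since the penalized edges lie in $\Om$ where it is $H^2$. The only cosmetic difference is that you treat $\Pha$ while the paper treats $\Phm$, each invoking symmetry for the other.
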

\begin{proof}
We prove only the estimates for $\Phm w$ since the proof for $\Pha w$ follows almost the same procedure. Clearly,
\begin{equation}\label{eq:ell-proj:orth}
 \bh(v_h,w-\Phm w) = 0,\quad\forall v_h\in V_h.
\end{equation}
Let $I_h$ be the finite element interpolation operator onto $V_h$. We have the following interpolation error estimates (cf. \cite{brenner2007mathematical,lenoir1986optimal,bernardi1989optimal}). For any $ v\in H^2(\OchO)$,
\begin{equation}\label{eq:err:intp}
\Lt{v-\Ih v}{\D}+h\sHo{v-\Ih v}{\D}+h^2\sHt{v-\Ih v}{\Mh} \ls h^2\sHt{v}{\OchO}.
\end{equation}
From Lemma~\ref{lem:cont-coer} and \eqref{eq:ell-proj:orth} and \eqref{eq:err:intp},
\begin{align*}
\sHo{\Ih w-\Phm w}{\D}^2 &{}\ls \Re(\bh (\Ih w-\Phm w,\Ih w-\Phm w))\\
 &{}\ls \Re(\bh (\Ih w-\Phm w,\Ih w-w)) \\
 &{}\ls \sHo{\Ih w-\Phm w}{\D}\left(\sHo{\Ih w -w}{\D}+h\sHt{w}{\OchO}\right) \\
 &{}\ls h\sHo{\Ih w-\Phm w}{\D}\sHt{w}{\OchO},
\end{align*}
which together with \eqref{eq:err:intp} implies that
\begin{equation}\label{eq:err:ell-proj:h1}
\sHo{w-\Phm w}{\D}\ls h\sHt{w}{\OchO}.
\end{equation}
 
To estimate the $L^2$-error, we consider the following auxiliary problem:
\begin{equation}\label{eq:err:10}
-\na\cdot(A\na z) = w-\Phm w\quad \mbox{in }\D,\qquad z = 0 \quad\mbox{on }\hat{\Gamma}.
\end{equation}
It can be shown that (cf. \cite{huang2007uniform})
\begin{equation}\label{eq:err:11}
\sHt{z}{\OchO}\ls\Lt{w-\Phm w}{\D}.
\end{equation}
Testing \eqref{eq:err:10} by the conjugate of $w-\Phm w$, applying integration by parts and using  Lemma~\ref{lem:cont-coer} and \eqref{eq:err:intp} and \eqref{eq:err:ell-proj:h1}, then we get
\begin{align*}
\Lt{w-\Phm w}{\D}^2 &{}=\bh (z,w-\Phm w)=\bh (z-\Ih z,w-\Phm w) \\
 &{}\ls h^2\sHt{z}{\OchO}\sHt{w}{\OchO} 
\end{align*}
which together with \eqref{eq:err:11} gives
\[
\Lt{w-\Phm w}{\D} \ls h^2\sHt{w}{\OchO}.
\]

Finally, we can complete the proof of the lemma by using the inverse estimates (see \cite{chen2010selected}) and \eqref{eq:err:intp} and \eqref{eq:err:ell-proj:h1},
\begin{align*}
\sHt{w-\Phm w}{\Mh} &{}\ls \sHt{w-\Ih w}{\Mh} + h^{-1}\sHo{\Ih w-\Phm w}{\D} \ls \sHt{w}{\OchO}.
\end{align*}
\end{proof}

\subsection{Preasymptotic error estimates} 
In this subsection we prove  preasymptotic error estimates for the CIP-FE discretization \eqref{eq:CIP-FEM} of the truncated PML problem \eqref{eq:PML:all}--\eqref{eq:PML:bound:all} by using the modified duality argument proposed in \cite{zhu2013preasymptotic}. 
\begin{theorem}\label{thm:err}
Under the conditions of Theorem~\ref{thm:stab:L2} and Lemma~\ref{lem:cont-coer}, let $u_h$ denotes the CIP-FE solution of \eqref{eq:CIP-FEM}. Then there exists a constant $C_0>0$ independent of $k,h,f$ and the penalty parameters such that if 
\[
k^3h^2\leq C_0,
\]
then the following error estimates hold:
\begin{align}
\He{\hu-u_h} &\ls (kh+k^3h^2)\Lt{f}{\Om}, \label{eq:H1err}\\
\Lt{\hu-u_h}{\D}&\ls k^2h^2\Lt{f}{\Om}.\label{eq:L2err}
\end{align}
\end{theorem}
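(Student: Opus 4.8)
The plan is to combine Galerkin orthogonality with the modified duality argument of \cite{zhu2013preasymptotic}, feeding in the elliptic-projection error bounds of Lemma~\ref{lem:err:ell-proj}, the continuity and coercivity of $\bh$ from Lemma~\ref{lem:cont-coer}, and a wave-number-explicit regularity estimate for the adjoint problem. Write $e:=\hu-u_h$ and decompose it through the elliptic projection as $e=\rho+\theta$, where $\rho:=\hu-\Pha\hu$ and $\theta:=\Pha\hu-u_h\in V_h$. The key structural facts I would record first are: (a) since $\hu\in H^2(\OchO)$, and since any $w\in H_0^1(\D)\cap H^2(\OchO)$ has no normal-derivative jump across the edges in $\EhI$, one has $J(\hu,\cdot)=0$ and $J(\cdot,w)=0$, so that \eqref{eq:PML:Var} and \eqref{eq:CIP-FEM} yield the Galerkin orthogonality $\ah(e,v_h)=0$ for all $v_h\in V_h$; (b) the splitting $\ah(u,v)=\bh(u,v)-k^2\inD{Bu}{v}$ holds by the definitions in \eqref{eq:err:bh}; and (c) $\bh(\rho,v_h)=0$ for all $v_h\in V_h$ by the definition \eqref{eq:ell-proj} of $\Pha$.

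I would prove the $L^2$ bound \eqref{eq:L2err} first, by duality. Let $w\in H_0^1(\D)$ solve the adjoint problem \eqref{w} with data $e$, i.e. $a(v,w)=\inD{v}{e}$. Since $e$ is in general not supported in $\Om$, the regularity analysis underlying Corollary~\ref{cor:H2-stab}, applied to this adjoint problem (which by the discussion in Section~\ref{sec:Anal-PML} is entirely analogous to the forward one), gives $\sHt{w}{\OchO}\ls k\Lt{e}{\D}$; this is the step where the stability theory of Section~\ref{sec:Anal-PML} is actually used. Taking $v=e$, using $J(\cdot,w)=0$, then Galerkin orthogonality against $\Phm w\in V_h$, and finally $\bh(\theta,w-\Phm w)=0$, I obtain
\[
\Lt{e}{\D}^2=a(e,w)=\ah(e,w)=\ah(e,w-\Phm w)=\bh(\rho,w-\Phm w)-k^2\inD{Be}{w-\Phm w}.
\]
The first term is bounded through the continuity in Lemma~\ref{lem:cont-coer} and the projection estimates of Lemma~\ref{lem:err:ell-proj} by $h^2\sHt{\hu}{\OchO}\sHt{w}{\OchO}\ls k^2h^2\Lt{f}{\Om}\Lt{e}{\D}$, using $\sHt{\hu}{\OchO}\ls k\Lt{f}{\Om}$ from Corollary~\ref{cor:H2-stab}; the second term is bounded by Cauchy--Schwarz and Lemma~\ref{lem:err:ell-proj} by $k^2\Lt{e}{\D}\,h^2\sHt{w}{\OchO}\ls k^3h^2\Lt{e}{\D}^2$. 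Hence $\Lt{e}{\D}^2\ls k^2h^2\Lt{f}{\Om}\Lt{e}{\D}+k^3h^2\Lt{e}{\D}^2$, and choosing $C_0$ so small that $k^3h^2\le C_0$ lets the last term be absorbed into the left side, yielding \eqref{eq:L2err}.

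For the energy estimate \eqref{eq:H1err} I would use the coercivity of $\bh$ together with the $L^2$ bound just obtained. From $\bh(\rho,\theta)=0$ and the Galerkin orthogonality $\ah(e,\theta)=0$, coercivity (Lemma~\ref{lem:cont-coer}) gives $\sHo{\theta}{\D}^2\ls\Re\bh(\theta,\theta)=\Re\bh(e,\theta)=\Re\!\big(k^2\inD{Be}{\theta}\big)\ls k^2\Lt{e}{\D}\Lt{\theta}{\D}$. Since $\Lt{\theta}{\D}\le\Lt{e}{\D}+\Lt{\rho}{\D}\ls k^2h^2\Lt{f}{\Om}$ by \eqref{eq:L2err} and Lemma~\ref{lem:err:ell-proj}, this yields $\sHo{\theta}{\D}\ls k^3h^2\Lt{f}{\Om}$. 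Combining with $\sHo{\rho}{\D}\ls h\sHt{\hu}{\OchO}\ls kh\Lt{f}{\Om}$ and with $k\Lt{e}{\D}\ls k^3h^2\Lt{f}{\Om}$, and bounding $\He{e}\ls\sHo{e}{\D}+k\Lt{e}{\D}$ from the definition \eqref{eq:enorm} and the boundedness of the coefficients, gives $\He{e}\ls(kh+k^3h^2)\Lt{f}{\Om}$, which is \eqref{eq:H1err}. Existence and uniqueness of $u_h$ follow in the usual way: the a priori bounds, applied with $f=0$ (whence $\hu=0$), force $u_h=0$, so the square linear system \eqref{eq:CIP-FEM} is nonsingular.

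The \emph{main obstacle} is the wave-number-explicit adjoint regularity $\sHt{w}{\OchO}\ls k\Lt{e}{\D}$, which rests on the full stability analysis of Section~\ref{sec:Anal-PML} and, crucially, on its validity for data supported in the PML layer $\hOm$. A secondary but essential subtlety is the careful tracking of the powers of $k$: the pollution term must come out as $k^3h^2$ rather than $k^4h^2$, which is why $\sHo{\theta}{\D}$ is estimated through $\Lt{\theta}{\D}$ (and hence through the already-established $L^2$ bound) rather than through a Poincar\'e inequality, and why the smallness condition $k^3h^2\le C_0$ is exactly what is needed to absorb the term $k^3h^2\Lt{e}{\D}^2$ in the duality estimate.
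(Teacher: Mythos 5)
Your proposal is correct and follows essentially the same route as the paper: the same modified duality argument with the adjoint problem and its $k$-explicit $H^2$ bound, the same elliptic projections $\Ph$, Galerkin orthogonality, absorption of the $k^3h^2\Lt{\hu-u_h}{\D}^2$ term under $k^3h^2\le C_0$, and the same coercivity-based estimate of the discrete part for the energy bound. The only cosmetic deviations are that you replace $u_h$ by $\Pha\hu$ rather than by the interpolant $\Ih\hu$ in the duality step and work with $\theta=\Pha\hu-u_h$ instead of its negative, neither of which changes the argument.
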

\begin{proof}
Denote by $e=\hu-u_h$. Introduce the dual problem: 
Find $w\in H_0^1(\D)$ such that
\begin{equation}\label{eq:dual}
a(v,w) = \inD{v}{e},\quad \forall v\in H_0^1(\D).
\end{equation}
Similar to the regularity \eqref{eq:H2-stab}, we have
\begin{equation}\label{eq:err:14}
\sHt{w}{\OchO}\ls k\Lt{e}{\D}.
\end{equation}
From \eqref{eq:PML:Var} and \eqref{eq:CIP-FEM}, the following $Galerkin~orthogonality$ holds:
\begin{equation}\label{eq:Gal-orth}
\ah (e,v_h)=0 \qquad \forall v_h\in V_h.
\end{equation}
It follows from \eqref{eq:dual}, \eqref{eq:Gal-orth}, and \eqref{eq:err:intp} and Lemmas~\ref{lem:cont-coer} and \ref{lem:err:ell-proj} that
\begin{align}\label{eq:DA}
\Lt{e}{\D}^2 &=\ah(e,w)=\ah(e,w-\Phm w) \\
 &=\bh(\hu-\Ih\hu,w-\Phm w) - k^2\inD{Be}{w-\Phm w} \notag\\
 &\ls h^2\sHt{\hu}{\OchO}\sHt{w}{\OchO}+k^2h^2 \Lt{e}{\D}\sHt{w}{\OchO}.\notag
\end{align}
From \eqref{eq:H2-stab} and \eqref{eq:err:14}, there holds
\[
\Lt{e}{\D}^2 \leq Ck^2h^2 \Lt{f}{\Om}\Lt{e}{\D}+Ck^3h^2\Lt{e}{\D}^2.
\]
Therefore there exists a constant $C_0>0$ such that if $k^3h^2\leq C_0$, then \eqref{eq:L2err} holds.

To show \eqref{eq:H1err}, we denote by $\eta=u_h-\Pha\hu\in V_h$. It follows from Lemma~\ref{lem:cont-coer} and \eqref{eq:ell-proj} that
\begin{align*}
\sHo{\eta}{\D}^2 &{}\ls\Re(\bh(\eta,\eta)) = \Re\left(\ah(\eta,\eta)+k^2\inD{B\eta}{\eta}\right)\\
 &{}= \Re\left(\ah(\hu-\Pha\hu,\eta)+k^2\inD{B\eta}{\eta}\right) \\
 &{}= \Re\left(-k^2\inD{B(\hu-u_h)}{\eta}\right) \\
 &{}\ls k^2\Lt{e}{\D}\Lt{\eta}{\D}.
\end{align*}
Noting that $\Lt{\eta}{\D}\leq\Lt{\hu-\Pha\hu}{\D}+\Lt{e}{\D}$, and using Lemma~\ref{lem:err:ell-proj} and \eqref{eq:L2err}, we have 
\begin{align*}
\sHo{\eta}{\D} &\ls k^3h^2\Lt{f}{\Om},
\end{align*}
which implies that
\begin{align*}
\sHo{e}{\D} &\ls \sHo{\hu-\Pha\hu}{\D}+\sHo{\eta}{\D} \ls (kh+k^3h^2)\Lt{f}{\Om}.
\end{align*}
Then \eqref{eq:H1err} follows by noting that $\He{e}\eqsim \sHo{e}{\D}+k\Lt{e}{\D}.$ This completes the proof of the theorem.
\end{proof}

\begin{remark} (i) The traditional duality argument using $I_hw$ in the step \eqref{eq:DA} instead of $\Phm w$ gives only error estimates under the mesh condition that $k^2h$ is small enough (see \cite{aziz1979scattering,ihlenburg1997finite, melenk2010convergence,melenk2011wavenumber}).

(ii) For the Helmholtz problem \eqref{eq:Helm} with the impedance boundary condition, the same preasymptotic error estimates were obtained by  
 \cite{Wu2013Pre,zhu2013preasymptotic,du2015preasymptotic}.
 
(iii) The error bound in \eqref{eq:H1err} consists of two terms. The first term $O(kh)$ is of the same order as the interpolation error. For large wave number problems, the second term $O(k^3h^2)$ may be large even if the interpolation error is small. The second term is called the pollution error \cite{bs00}.

(iv) Since the CIP-FEM reduces to FEM when the penalty parameter $\gamma=0$, this theorem holds also for the FEM.

(v) The estimates of $u-u_h$ may be obtained by combining Theorems~\ref{thm:err:trunc} and \ref{thm:err}.

(vi) The penalty parameter may be tuned to reduce the pollution error (see the next section).  

(vii) It is possible to show that the CIP-FEM is absolute stable if the penalty parameters have negative imaginary parts (cf. \cite{Wu2013Pre,zhu2013preasymptotic}). This will be explored in a future work. 
\end{remark}

By combining Theorems~\ref{thm:stab:L2}and  \ref{thm:err} and Corollary~\ref{cor:H1-stab}, we obtain the following stability estimates for CIP-FE solution.
\begin{corollary}\label{cor:stab:CIP-FEM}
Under the conditions of Theorem~\ref{thm:err}, there holds,
\begin{equation}\label{eq:stab:CIP-FEM}
\He{u_h}+k\Lt{u_h}{\D} \ls \Lt{f}{\Om}.
\end{equation}
and hence the CIP-FEM is well-posed.
\end{corollary}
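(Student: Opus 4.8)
The goal is to establish the stability estimate \eqref{eq:stab:CIP-FEM} for the CIP-FE solution $u_h$, namely $\He{u_h}+k\Lt{u_h}{\D} \ls \Lt{f}{\Om}$. Since the conditions of Theorem~\ref{thm:err} are assumed, I already have in hand the preasymptotic error estimates \eqref{eq:H1err} and \eqref{eq:L2err} relating $u_h$ to the exact truncated PML solution $\hu$, as well as the $H^1$-stability of $\hu$ itself from Corollary~\ref{cor:H1-stab}. The plan is to bound $u_h$ by adding and subtracting $\hu$ and controlling each piece separately using a triangle inequality in the energy norm together with the $L^2$ norm.

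First I would write $\He{u_h}\le\He{\hu}+\He{\hu-u_h}$ and similarly $k\Lt{u_h}{\D}\le k\Lt{\hu}{\D}+k\Lt{\hu-u_h}{\D}$. The terms involving $\hu$ are handled directly: Corollary~\ref{cor:H1-stab} gives $\He{\hu}\ls\Lt{f}{\D}$, and the $L^2$-stability estimate \eqref{eq:stab:L2} from Theorem~\ref{thm:stab:L2} gives $k\Lt{\hu}{\D}\ls\Lt{f}{\D}$. Because we are in the regime $\supp f\subset\Om$, we have $\Lt{f}{\D}=\Lt{f}{\Om}$, so these contributions are already of the desired form $\ls\Lt{f}{\Om}$.

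Next I would absorb the error terms. From \eqref{eq:H1err}, $\He{\hu-u_h}\ls(kh+k^3h^2)\Lt{f}{\Om}$, and from \eqref{eq:L2err} together with the factor $k$, $k\Lt{\hu-u_h}{\D}\ls k^3h^2\Lt{f}{\Om}$. Under the mesh condition $k^3h^2\le C_0$ assumed in Theorem~\ref{thm:err}, both $kh$ and $k^3h^2$ are bounded (indeed $kh=\sqrt{k^3h^2}\cdot k^{-1/2}\le\sqrt{C_0}$ since $k\gtrsim1$), so each of these error contributions is $\ls\Lt{f}{\Om}$. Combining the four bounds yields \eqref{eq:stab:CIP-FEM}.

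The argument is essentially immediate given the preceding results, so there is no genuine analytical obstacle; the main point to be careful about is simply invoking the mesh condition to convert the $h$-dependent factors into $O(1)$ constants, and to remember that $\supp f\subset\Om$ lets us replace $\Lt{f}{\D}$ by $\Lt{f}{\Om}$ throughout. Well-posedness (uniqueness, hence existence in the finite-dimensional setting) then follows because the a~priori bound \eqref{eq:stab:CIP-FEM} forces $u_h=0$ when $f=0$, so the square linear system defining the CIP-FEM has only the trivial kernel and is therefore invertible.
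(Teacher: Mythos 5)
Your proof is correct and is precisely the paper's argument: the paper establishes Corollary~\ref{cor:stab:CIP-FEM} by combining Theorem~\ref{thm:stab:L2}, Theorem~\ref{thm:err}, and Corollary~\ref{cor:H1-stab} via the triangle inequality, with the mesh condition $k^3h^2\le C_0$ (and $k\gtrsim 1$) absorbing the factors $kh$ and $k^3h^2$, exactly as you do. Your added remarks on $\Lt{f}{\D}=\Lt{f}{\Om}$ and on well-posedness via the finite-dimensional a~priori bound simply make explicit what the paper leaves implicit.
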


\section{Numerical results}\label{sec:Experimental}
In this section, we simulate the Helmholtz problem \eqref{eq:Helm}--\eqref{eq:Somm} with $\Om=\{x\in\R^2:\abs{x}< 1\}$ and the following $f$ and exact solution.
\begin{equation}\label{eq:numer:ue}
f=\left\{
\begin{aligned}
&1,\quad\mbox{in }\Om, \\
&0,\quad\mbox{otherwise}.
\end{aligned}
\right.\qquad
 u = \begin{cases}
 \frac{\i\pi}{2k}H_1^{(1)}(k)J_0(kr)-\frac{1}{k^2},&\text{in }\Om,\\
 \frac{\i\pi }{2k} J_1(k) H_0^{(1)}(kr),&\text{otherwise}.
 \end{cases}
\end{equation}
The problem is first truncated by the PML and then discretized by the CIP-FEM \eqref{eq:CIP-FEM}. We will report some numerical results on the FEM (i.e. CIP-FEM with $\gamma_e\equiv 0$) and the CIP-FEM with the following penalty parameters which are obtained by a dispersion analysis for two dimensional problems on equilateral triangulations:
\begin{equation}\label{eq:penalty}
\gamma_e = -\frac{\sqrt{3}}{24}-\frac{\sqrt{3}}{1728}(kh)^2.
\end{equation} 
The codes are written in MATLAB. Since the mesh generation program usually produces triangulations in which most triangle elements are approximate equilateral triangles (see the left graph of Figure~\ref{fig:mesh_ue}), it is expected that the above choice of penalty parameters can reduce the pollution error. Set $k\geq 2,~\siz =5$ and $L=R=1$ and hence \eqref{eq:assumption} holds.  
From Theorems~\ref{thm:err} and \ref{thm:err:trunc}, the $H^1$-error of the FE or CIP-FE solution $u_h$ is bounded by
\begin{equation}\label{eq:exp:1}
\Ho{u-u_h}{\Om}\le C_1 kh+C_2 k^3h^2+C_3k^5e^{-9.8k}.
\end{equation}
for some constants $C_j~(j=1,2,3)$ under the condition of $k^3h^2\le C_0$. The first term on the right hand side of \eqref{eq:exp:1} corresponds to the interpolation error, the second term to the pollution error, and the third term to the PML truncation error. 

The right graph of Figure~\ref{fig:mesh_ue} plots the traces of the real parts of the exact solution, FE solution, and CIP-FE solution as $y=0$ and $x$ from $0$ to $1$ when $k=100$ on a mesh with $h\approx\frac 1{128}$. It is obvious that the CIP-FE solution fits the exact solution much better than FE solution.

\begin{figure}[htbp]
\centering
\begin{minipage}[c]{0.5\textwidth}
\centering
\includegraphics[height=5cm,width=6.25cm]{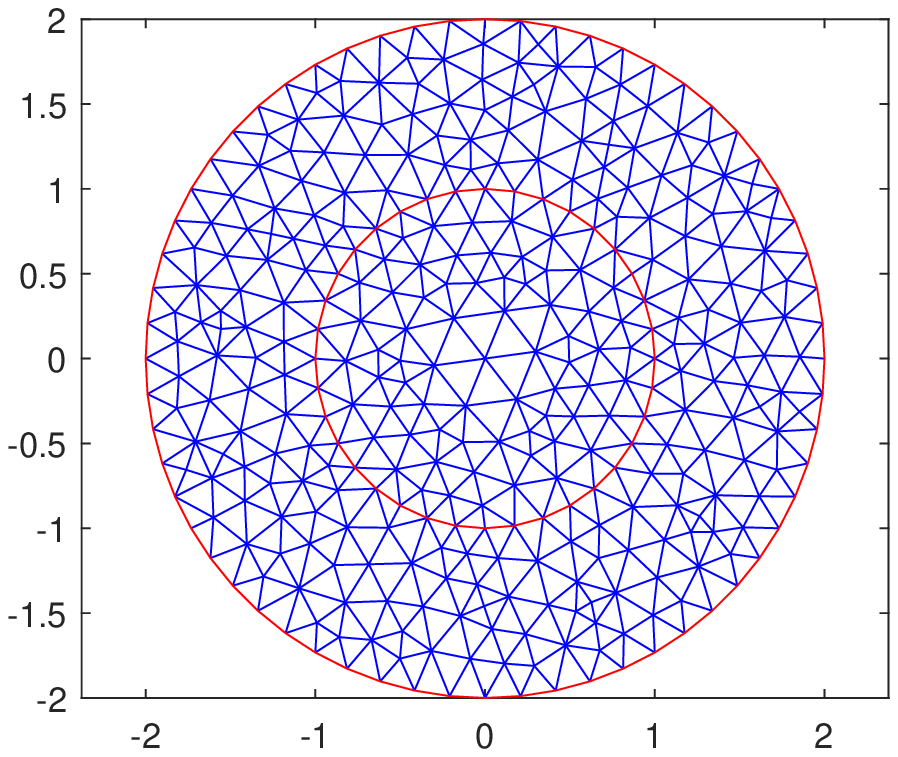}
\end{minipage}%
\begin{minipage}[c]{0.5\textwidth}
\centering
\includegraphics[height=5cm,width=6.25cm]{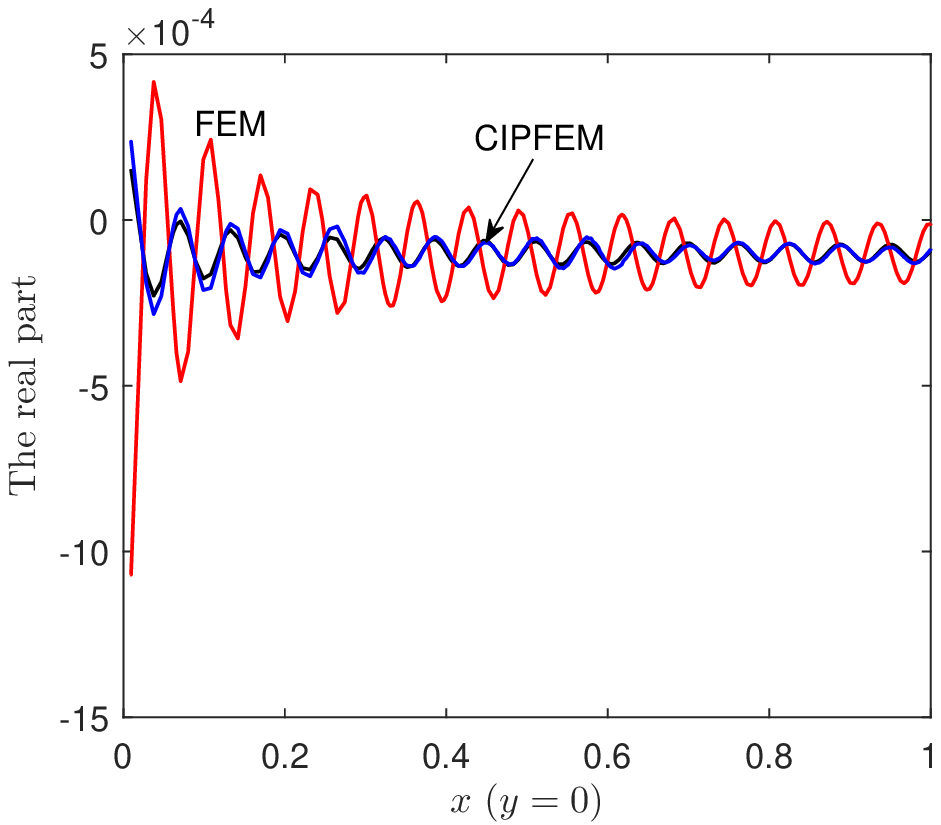}
\end{minipage}
\caption{Left graph: A sample mesh. Right graph: The traces of the real parts of the exact solution (black), FE solution (red), and CIP-FE solution (blue) when $k=100$ and $h\approx\frac 1{128}$.}
\label{fig:mesh_ue}
\end{figure}

Figure~\ref{fig:ex3err} plots the relative errors in $H^1$-norm of the (CIP-)FE solutions and FE interpolations for $k=5,~25$, and $100$, respectively. It is shown that, for $k=5$ the errors of FE and CIP-FE solutions fit those of the corresponding FE interpolation very well, which implies the pollution errors do not show up for small wave number. For large $k$, the relative errors of the FE solutions decay slowly on a range starting with a point far from the decaying point of the corresponding FE interpolations. This behavior show clearly the effect of the pollution error of FEM. The CIP-FE solutions behave similarly as the FE solutions but the pollution range of the former is much smaller than that of the later. 
\begin{figure}[h]
\centering
\begin{minipage}[c]{0.5\textwidth}
\centering
\includegraphics[height=5cm,width=6.25cm]{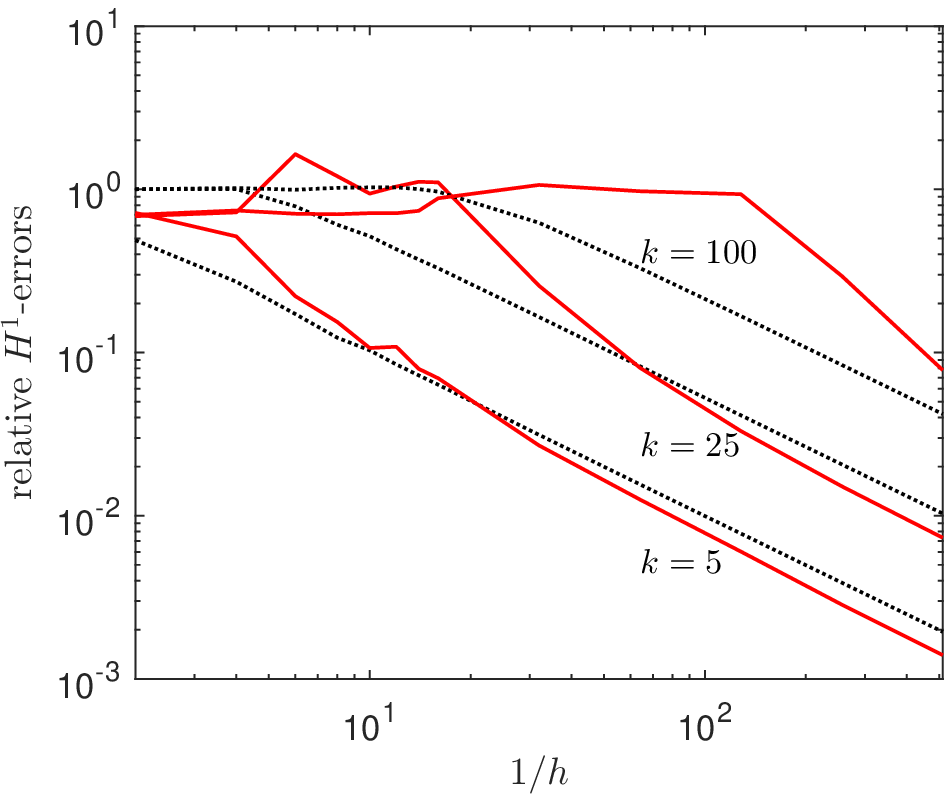}
\end{minipage}%
\begin{minipage}[c]{0.5\textwidth}
\centering
\includegraphics[height=5cm,width=6.25cm]{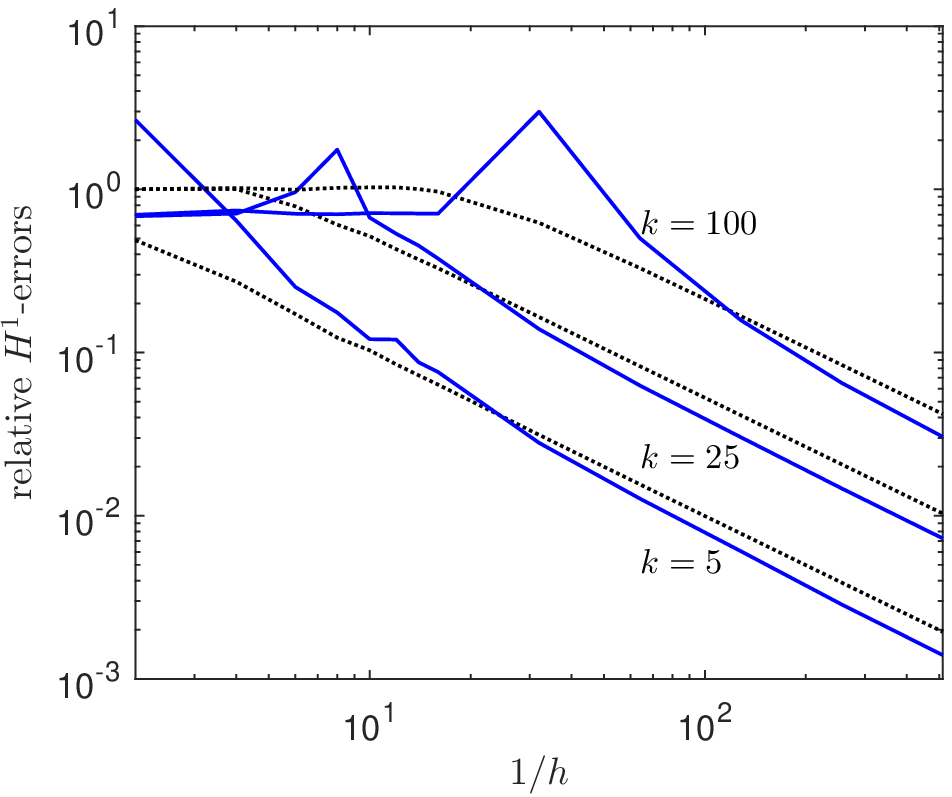}
\end{minipage}
\caption{The relative $H^1$-errors of the FE solution (left graph) and the CIP-FE solution (right graph), compared with the relative $H^1$-error of the FE interpolation (dotted) for $k = 5, 25$, and $100$, respectively.}
\label{fig:ex3err}
\end{figure}

More intuitively, we fix $kh=1$ and $kh=0.5$, and plot the relative $H^1$-errors of both the FEM and CIP-FEM for increasing wave numbers $k$ in one figure (see Figure~\ref{fig:ex6kh}). It's obvious that the effect of the pollution error of FEM works when $k$ becomes greater than some value less than 50, while the pollution error of CIP-FEM is almost invisible for $k$ up to 250. In addition, the errors of the FE interpolation and CIP-FE solution for the case of $kh=1$ are almost twice as big as those for the case of $kh=0.5$.

\begin{figure}[htbp]
\centering
\includegraphics[width=8cm]{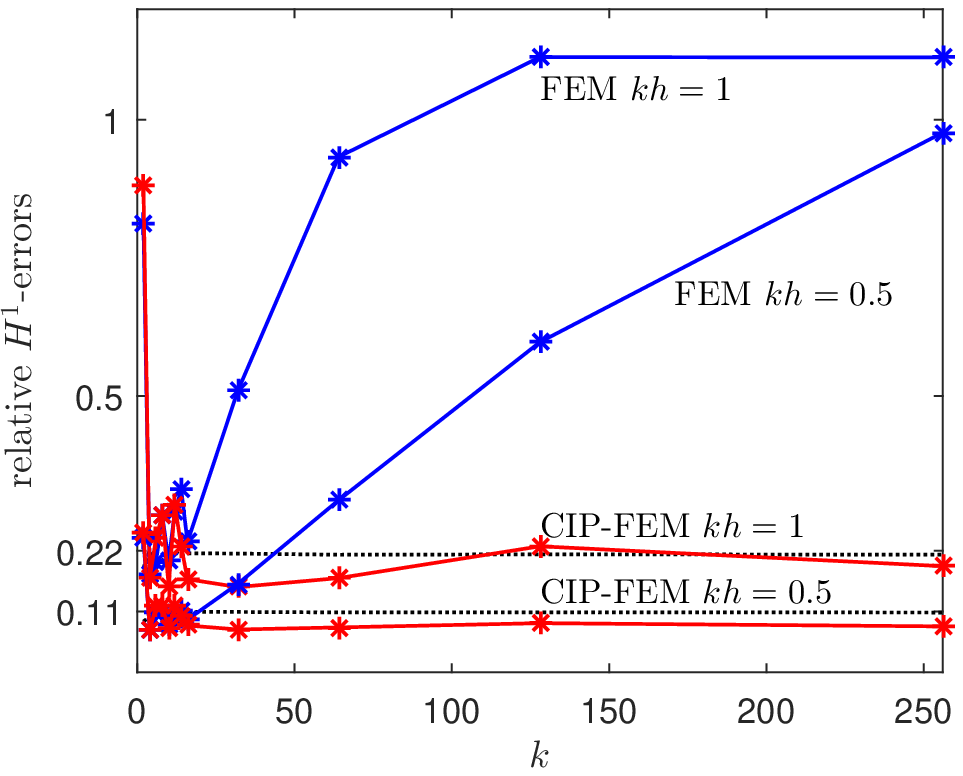}
\caption{The relative $H^1$-errors of the FE solution, the CIP-FE solution and the FE interpolation (dotted), with the mesh size which satisfies $kh=1$ and $kh=0.5$, respectively.}
\label{fig:ex6kh}
\end{figure}

\section*{Acknowledgment} The authors would like to thank two anonymous referees for their insightful and constructive comments and suggestions that have helped us improve 
the paper essentially.

\end{document}